\newtheorem{theorem}{Theorem}[section]
\newtheorem{lemma}[theorem]{Lemma}
\newtheorem{proposition}[theorem]{Proposition}
\newtheorem{corollary}[theorem]{Corollary}
\newtheorem{convention}{Convention}
\theoremstyle{definition}
\newtheorem{example}[theorem]{Example}
\theoremstyle{definition}
\newtheorem{examples}[theorem]{Examples}
\theoremstyle{definition}
\theoremstyle{remark}
\newtheorem{remark}[theorem]{Remark}
\newcommand{\ChAngel}[1]{{\bf \textcolor{blue}{#1}}}
\newcommand{\D}{{\mathcal{D}}}
\newcommand{\Ker}{\mbox{\rm Ker }}
\newcommand{\Imagen}{\mbox{\rm Im }}
\newcommand{\Id}{\mbox{\rm Id}}
\newcommand{\E}{\mathcal{E}}
\newcommand{\End}{\mbox{\rm End}}
\newcommand{\END}{\mbox{\rm END}}
\newcommand{\Res}{\mbox{\rm Res}}
\newcommand{\Ind}{\mbox{\rm Ind}}
\newcommand{\FM}{\mbox{\rm FM}}
\newcommand{\Mod}{\mbox{\rm mod}}
\newcommand{\Modr}{\Mod\text{-}}
\newcommand{\gr}{\mbox{\rm gr}}
\newcommand{\grr}{\gr\text{-}}
\newcommand{\grl}{\text{-}\gr}
\newcommand{\Z}{{\mathbb Z}}
\newcommand{\res}{\mbox{res}}
\newcommand{\ind}{{\rm ind}}
\newcommand{\lRGX}{(R,G,X)\grl}
\newcommand{\rRGX}{\grr(R,G,X)}
\newcommand{\lSHY}{(S,H,Y)\grl}
\newcommand{\rSHY}{\grr(S,H,Y)}
\newcommand{\hotimes }{\widehat{\otimes}}
\newcommand{\matriz}[1]{\begin{array} #1 \end{array}}
\newcommand{\pmatriz}[1]{\left(\begin{array} #1 \end{array}\right)}
\newcommand{\diag}{\operatorname{diag}}
\DeclareMathOperator{\Hom}{Hom}
\newcommand{\qand}{\quad \text{and} \quad}
\newcommand{\Supp}{{\rm Supp}}
\numberwithin{equation}{section}
\newtheoremstyle{prop}
 {0.3em}
 {0.3em}
 {\itshape}
 {}
 {\bfseries}
 {.}
 {.5em}
 {}
\title{Groupoid graded rings and  their categories of graded modules}
\author{Caio Antony}
\address{Departamento de Matem\'{a}tica, Instituto de Matemática e Estatística, Universidade de São Paulo, São Paulo, Brazil} \email{caioagma@gmail.com}
\author{Ángel del Río}
\thanks{The first author was financed, in part, by the São Paulo Research Foundation (FAPESP), Brazil. Process Numbers 2022/11166-6 and 2023/11994-9. 
The second author is partially supported by PID2024-155576NB-I00 funded by MICIU/ AEI/10.13039/501100011033/FEDER, UE and by Grant 22004/PI/22 of Fundación Séneca de la Región de Murcia.}
\address{Departamento de Matem\'{a}ticas, Universidad de Murcia, 30100, Murcia, Spain} \email{adelrio@um.es}
\keywords{Graded rings, groupoids, categories of graded modules, adjoint functors.}
\subjclass{16W50, 20L05, 18A40}
\begin{document}

\begin{abstract}
Let $G$ be a groupoid acting on a set $X$ and let $R$ be a $G$-graded ring with graded local units. 
We study the main properties of the category $\rRGX$ of $X$-graded $R$-modules and adjoint functors between categories of this kind. We characterize the latter in terms of tensor-like and hom-like functors. As an application we obtain a characterization of equivalences between such categories in the spirit of Morita theory. Then we introduce restriction and induction functors between categories of type $\rRGX$ and show that many functors between such categories can be realized naturally as a restriction or induction functor. This includes the forgetful functor, the functor associating a $G$ graded $R$-module with the $H$-graded module formed by the sum of the homogeneous components of degree in subgroupoid $H$, and the one associating it with the collapsing of homogeneous components in cosets modulo $H$ for $H$ a wide subgrupoid. We characterize when a restriction or induction functor is an equivalence of categories. Finally, we prove that $\rRGX$ is always equivalent to the category of modules over a ring with local units and, generalizing a result of Menini and N\u{a}st\u{a}sescu, we characterize when $\rRGX$ is equivalent to the category of modules over a unital ring.
\end{abstract}

\maketitle

\section{Introduction}

Gradings in rings and modules have a long tradition in mathematics starting from the most classical example of the natural $\Z$-gradings on polynomial rings, or more generally $\Z^n$-gradings on polynomial rings in $n$ variables. Those gradings are ubiquitous in algebraic geometry and served as a model for gradings by abstract groups and more generally by semigroups. They play an important role in several areas of mathematics as representation theory or ring theory, but also in physics or computational algebra. 

Most of the literature on graded rings is devoted to group graded unital rings  \cite{BahturinSehgalZaicev2001, Beattie1988,CohenMontgomery1984, Dade1980, DascalescuNastasescuNastasescu2023,FanKulshammer2000,GomezPardoNastasescu1991,GordonGreen1982, Marcus1999}. 
In that case some functors play a relevant role. One of them is the forgetful functor which associates a graded module with its ungraded version. Another one is the restriction functor that for a group $G$, a subgroup $H$ of $G$ and a $G$-graded ring $R$, associates each $G$-graded $R$-module $M=\oplus_{g\in G} M_g$ with $M_H=\oplus_{h\in H} M_h$, considered as $H$-graded $R_H$-module. 
These functors connect the categories $\grr R$ of $G$-graded $R$-modules with the category  $\Modr R$ of (ungraded) $R$-modules, and with the category $\grr R_H$ of $H$-graded $R_H$-modules.

In the last two decades of the 20-th century, powerful categorical methods were developed in the theory of group graded rings. The book \cite{NastasescuVanOystaeyen} of C. N\u{a}st\u{a}sescu and F. Van Oystaeyen is a good introduction to these methods. 
Some attempts to extend these methods to the case where the grading group is replaced by a semigroup obtained some partial success \cite{AbramsMenini1996, AbramsMenini1999,AbramsMeninidelRio1994, ClaseThesis,ClaseJespersKelarevOkninski1995, Kelarev1998,KelarevOkninski}, but in general the results are not as satisfactory as in the group grading case. 

C. N\u{a}st\u{a}sescu, S. Raianu and F. Van Oystaeyen in \cite{NastasescuRaianuVanOystaeyen} introduced gradings by $G$-sets and, somehow surprisingly, in this case, the methods for group graded rings extended very nicely (see also \cite{BeattieDascalescu1996, BulacuDascalescuGrunenfelder1997,Rio1992}). Among other applications, this approach allowed an inductive method to  connect the graded and ungraded structure of a $G$-graded ring via a subnormal series of $G$. The idea consists in considering $G/H$ and $H\backslash G$ as $G$-sets, in the standard way, and then seeing a graded $R$-module as a $G/H$-graded $R$-module collapsing homogeneous component in the same coset. This can be considered as a partial forgetful functor with the full forgetting occurring for $H=G$. Alternatively, $G$ can be considered as an $H$ set and this gives another tool to pass from $R$ to $R_H$ and viceversa.

One of the good features of the category $\grr R$, for $R$ a group graded ring, is that it is a Grothendieck category with a projective generator. Moreover, it is equivalent to the category of modules of the smash product, which is a ring with local units, but not unital, in general (see \cite{CohenMontgomery1984} for the case where $G$ is finite, and the alternative approaches in \cite{Quin1985} and \cite{Beattie1988}, for the case of $G$ infinite. Furthermore, a $G$-grading can be seen as a coaction of a group algebra. Strictly speaking this only applies when $R_e$ is a field, or at least a commutative ring, but the Hopf algebra methods have been used in general without this assumption. This idea of using Hopf algebra methods appeared for the first time in \cite{CohenMontgomery1984} and have been largely used afterwards \cite{Beattie1988D,CaenepeelMilitaruZhuShenglin2002,DascalescuNastasescuNastasescu2014,MeniniZuccoli1997,Montgomery1993}.

In the latter years several authors have considered gradings by groupoids \cite{CalaLundstromPinedo2021,CalaLundstromPinedo2022,CristianoMarquesSanchez,LiuLi2006,Lundstrom2004,MoreiraOinert2024,NystedtOinertPinedo2018}. One of the motivations is its connection with partial actions \cite{Jerez2025}. The aim of this paper is to explore how far the categorical methods for group graded rings extend to the case of groupoid gradings. We present a general approach including gradings by $G$-sets, with $G$ a groupoid, and allowing the ring to have local units, in a grading sense. The latter is motivated, not only for the natural appearance of rings with local units already in the group grading case, as mentioned above, but also because some relevant examples of groupoid gradings appears naturally in rings with local units. This is the case, for example, of the groupoid rings with coefficients in a unital ring, or the ring of finite matrices over a unital ring indexed by an infinite set. We do not consider arbitrary rings, i.e. without local units, because even for the ungraded case the methods and results are of a completely different nature (see e.g. \cite{GarciaMarin1999,GarciaMarin2000,GarciaSimon1991}).
Observe that a groupoid graded ring can be seen as a contracted semigroup graded ring. 
However the existence of inverses allows to recover for groupoid graded rings methods of group graded rings which either fail or become cumbersome for semigroup graded rings. 

The paper is organized as follows: 
In \Cref{SectionNotation} we establish the basic notation, terminology and definitions.
It also includes the proof of a basic fact: If $G$ is a groupoid, $R$ is a $G$-graded ring with graded local units, and $X$ is a $G$-set, then the (right) $X$-graded $R$-modules form a Grothendieck category with a set of generators which are finitely generated projective and are defined in terms of shifts modules $(x)R$ and graded local units (\Cref{GCat}). 
We denote this category by $\rRGX$.
The connections between categories of this kind through some relevant functors is the subject of the paper. 
In \Cref{SectionAdjoints} we introduce hom-like and tensor-like functors on such categories and show that they realize all the adjoint functors between categories of $G$-set graded modules (\Cref{proptensadj}). 
Using this we obtain in \Cref{SectionEquivalences} a graded version of the Morita Theorem about equivalences between categories of modules over rings with identity (\Cref{Equivalencia}). 
In \Cref{SectionRestExt} we introduce a graded version of restriction and extension of scalars functors and prove that, as in the ungraded case, form  adjoint pairs (\Cref{ExtensionRestrictionAdjoint}). We specialize this in several instances and show that forgetful and grading descend functors are examples of induction and restriction of scalars, respectively (see \Cref{exclassicalrestriction} and \Cref{ForgetfulExtension}). In the last section we obtain two applications. The first one characterizes when restriction and extension of scalars define an equivalence of categories (\Cref{EquivRest}). 
In the second application we first prove that $\rRGX$ is always equivalent to a category of (unital) modules over a ring with local units (\Cref{EquivRGXmodS}), and then we characterize when $\rRGX$ is equivalent to the category of modules of a unital ring (\Cref{MeniniNastasescu}). The former extends a theorem of M. Beattie \cite{Beattie1988} and the latter extends a theorem of C. Menini and C. N\u{a}st\u{a}sescu \cite{MeniniNastasescu1988}, which proved the same results for the category $\grr R(=\gr (R,G,G))$, for $R$ a unital group graded ring.

\section{\underline{Basic Notation and Definitions}}\label{SectionNotation}

\subsection{Actions of groupoids}

Recall that a \emph{groupoid} is a small category with invertible arrows.
We identify a groupoid with the set of its arrows together with their possible products. From this point of view a groupoid is a non-empty set $G$ together with four maps 
    $$\matriz{{rcc} G & \to & G \\ g & \mapsto & d(g)} \quad 
    \matriz{{rcc} G & \to & G \\ g & \mapsto & t(g)} \quad 
    \matriz{{rcc} G & \to & G \\ g & \mapsto & g^{-1}} \quad 
    \matriz{{ccl} \D_G=\{(g,h) \in G^2 : d(g)=t(h) \} & \to & G \\ (g,h) & \mapsto & gh}$$
satisfying the following conditions for every $g,h,k\in G$
\begin{itemize}
\item $d(d(g))=d(g)=t(g^{-1})$ and $t(t(g))=t(g)=d(g^{-1})$;
\item $(g,d(g)), (t(g),g)\in \D_G$ and $gd(g)=t(g)g=g$; 
\item $gg^{-1}=t(g)$ and $g^{-1}g=d(g)$;
\item if $(g,h), (h,k)\in \D_G$, then $d(gh)=d(h)$, $t(gh)=t(g)$ and $g(hk)=(gh)k$.
\end{itemize}
We use the notation $g:e\to f$ to abbreviate $e=d(g)$ and $f=t(g)$.
When $G$ is seen as a category, $d(g)$, $t(g)$ and $g^{-1}$ are the domain, codomain and inverse of $g$, for each $g\in G$, and if $(g,h)\in \D_G$, then $gh$ represent the product of $g$ and $h$. 
We are implicitly identifying each object of the original category with its identity. 
In this way the set of objects of the original category is identified with  
	\[G_0=\{e\in G : e=e^2\}=\{d(g):g\in G\}=\{t(g):g\in G\}.\]
Whenever we write $gh$ with $g,h\in G$, we are implicitly assuming that $(g,h)\in \D_G$ and in that case, we say that the product $gh$ \emph{is defined}. 
Other algebraic description of groupoids can be found in \cite{AvilaMarinPinedo2020,Paterson1999}. 

\begin{remark}\label{GroupoidsSemigroups}
Groupoids can be seen  as a special type of semigroups. More precisely, if $G$ is a small category, not necessarily  groupoid, and we identify $G$ with its arrows, as previously explained for groupoids, then we define the semigroup $G^0=G\cup \{0\}$, where $0$ is a symbol not representing any arrow of $G$, and extend the product in $G$ by defining $ab=0$ whenever $g=0$, or $h=0$ or $g,h\in G$ and $gh$ is not defined in $G$. In case $G$ is a groupoid, then $G^0$ is an inverse semigroup with orthogonal idempotents (and in particular, with zero). Conversely, if $S$ is an inverse semigroup  with zero on which the idempotents are orthogonal, then the non-zero elements of $S$ with the non-zero products form a groupoid $G$ such that $G^0=S$. 

The construction is also valid with any semigroup $S$, i.e., one can construct a new semigroup $S^0$ as the disjoint union of $S$ and $\{0\}$, and the products on $S$ are extended to $S^0$ by setting $0s=s0=0$ for every $s\in S^0$.

\end{remark}

\begin{remark}
The product map of a groupoid determines uniquely the other maps. Indeed, $G_0=\{g\in G, g^2=g\}$ and if $g\in G$, then $d(g)$ is the unique $e\in G_0$ for which $ge$ is defined, $t(g)$ is the unique $e\in G_0$ such that $eg$ is defined, and  $g^{-1}$ is the unique element $h\in G$ satisfying $gh=t(g)$ and also the unique $k\in G$ such that $kg=d(g)$.
\end{remark}

A \emph{homomorphism of groupoids} is simply a functor between two groupoids. In our algebraic approach, a groupoid homomorphism is a map $\varphi$ between groupoids such that if $gh$ is defined, then so is $\varphi(g)\varphi(h)$ and $\varphi(gh)=\varphi(g)\varphi(h)$.


A \emph{left  action of a groupoid} $G$ is a covariant functor from $G$ to the category of sets. 
So a left action of $G$ on sets is formed by a collection of non-empty sets $(X_e)_{e\in G_0}$ and a collection of maps 
$(\chi_g:X_{d(g)}\to X_{t(g)})_{g \in G}$, such that 
    $$\chi_e=\Id_{X_e} \text{ for each } e\in G_0, \text{ and }\chi_{gh}=\chi_g \circ \chi_h, \text{ for each } (g,h)\in \D_G.$$ 
In order to recover the standard notation for actions of groups on sets we write
    $$g\cdot x=\chi_g(x), \quad (g\in G, x\in X_{d(g)}).$$
To stress the similarity with group actions we set
    $$X=\bigcup_{e\in G_0} X_e \qand \D_{G,X}=\{(g,x) : x\in X_{d(g)}\}$$
and we say that $X$ is a left $G$-\emph{set}. Observe that the $X_e$'s are not necessarily disjoint. So a left $G$-set is a set $X$ together with a map 
\[\D_{G,X} \mapsto X, (g,x)\mapsto g\cdot x\]
where $\D_{G,X}$ is a subset of $G\times X$ and setting $X_g=\{x\in X : (g,x)\in \D_{G,X}\}$ for $g\in G_0$, the following conditions are satisfied for every $g,h\in G$ with $(h,g)\in \D_G$ and $x\in X_g$:
\begin{itemize}
	\item $X_g=X_{d(g)}$, 
	\item $g\cdot x\in X_{t(g)}$,	
	\item if $g\in G_0$, then $g\cdot x = x$, 
	\item $(hg)\cdot x = h\cdot (g\cdot x)$. 
\end{itemize}

The following notation for every $x\in X$ will be handy:
    $$E_x=\{e\in G_0 : x\in X_e\}.$$

A \emph{right action} of a groupoid $G$ is a contravariant functor from $G$ to the category of sets, i.e. for right actions
$\chi_g:X_{t(g)}\rightarrow X_{d(g)}$, $\chi_{gh}=\chi_h \chi_g$, and the product notation takes the form 
$$x \cdot g = \chi_g(x), \quad (g\in G, x\in X_{t(g)}).$$

Left and right actions of groupoids are equivalent concepts because one can pass from left to right action and viceversa by agreeing that 
$$g \cdot x = x\cdot g^{-1}, \text{ whenever } g\in G \text{ and } x\in X_{d(g))}.$$
So in the future we will talk of actions of groupoids on sets, without specifying the side of the action. 
We say that $g\cdot x$ (respectively, $x\cdot g$) \emph{is defined} if $x\in X_{d(g)}$ (respectively, $x\in X_{t(g)}$), equivalently if $d(g)\in E_x$ (respectively, $t(g)\in E_x$).
As for the product inside the groupoid, whenever we write $g\cdot x$ or $x\cdot g$, for $g\in G$ and $x\in X$, we are implicitly assuming that the product is defined.


The following examples describe groupoids and actions of groupoids on sets which occur naturally in several areas of mathematics.

\begin{examples}{\rm 
		\begin{enumerate}
			\item A group is simply a groupoid with a unique idempotent. In that case its actions are the classical actions of groups $G$ on sets.
			
			\item Let $T$ be a topological space. Then the fundamental groupoid $G$ of $T$ is formed by the homotopy classes of paths in $T$ \cite{Brown1988}. In this case $G_0$ can be identified with $T$ and $G$ acts on $T$ by setting $g\cdot x=y$ whenever $g$ is represented by a path from $x$ to $y$. 
			
			\item Let $V$ be a vector space and let $G$ be the set of injective linear maps $X\to V$ with $X$ a subspace of $V$. We endow $G$ with a structure of groupoid by claiming that $gh$ is defined when the image of $h$ is contained in the domain of $g$ and taking map composition as product in $G$.
			For this groupoid, $G_0$ is the set of inclusion maps into $V$ from subspaces of $V$; for $g:X\to V$ in $G$, $d(g)$ is the embedding map from $X$ to $V$, $t(g)$ is the embedding map from $g(X)$ to $V$, and $g^{-1}:g(X)\to V$ is given by setting $g^{-1}(g(y))=y$. Moreover, $G$ acts on $V$ by setting $V_g=X$ and $g\cdot x = g(x)$, for $g:X\to V$ in $G$. 
			
			This example can be generalized by replacing $V$ with a group, a module, a ring, an algebra, etc, and injective linear maps by injective homomorphism from subgroups, submodules, subrings, subalgebras, etc.

			\item Let $(\Gamma,X,(X_g)_{g \in \Gamma}, (\theta_g)_{g \in \Gamma})$ be a partial action of a group as defined in \cite{Jerez2025}. 
			That is $\Gamma$ is a group, $X$ is a set and for every $g\in \Gamma$, $X_g$ is a subset of $X$  and  $\theta_g:X_{g^{-1}}\to X_g$ is a bijection such that $\theta_1$ is the identity map of $X$, and for every $g,h\in \Gamma$ and $x\in X_{h^{-1}}\cap X_{(gh)^{-1}}$,   $\theta_h(x)\in X_{g^{-1}}$ and $\theta_g(\theta_h(x))=\theta_{gh}(x)$. 
			We define a groupoid 
				\[G = \{(g,x): g \in \Gamma, x \in X_{g^{-1}} \}\]
			with 
				\[\D_G = \{((g,x),(h,y))\in G^2 : y\in X_{h^{-1}}\cap X_{g^{-1}h^{-1}} \text{ and } x=\theta_h(y)\}\]
			and product given as follows 
				$$(g,x)(h,y) = (gh,y).$$
			It is shown in \cite{Jerez2025} that every groupoid $G$  is of this form for some partial action of a group.
	\end{enumerate}}
\end{examples}

We now present some examples of actions for arbitrary groupoids. 

\begin{examples}\label{ExGroupoids}{\rm
Let $G$ be a groupoid. 
\begin{enumerate}
\item The \emph{trivial} $G$-set is formed by a set with a unique element, say $x$, with $g\cdot x=x$ for every $g\in G$.

\item\label{RegularAction} The product on $G$ defines two actions of $G$ on itself: The left and right \emph{regular} actions. 
For both $\D_{G,G}=\D_G$. Observe that in the left regular action $X_e=\{g\in G: d(g)=e\}$ and $E_g=\{t(g)\}$, while in the right one, $X_e=\{g\in G: t(g)=e\}$ and $E_g=\{d(g)\}$, for $e\in G_0$ and $g\in G$. 
    

\item The \emph{action} of $G$ on itself \emph{by inner automorphisms} is defined by taking $X=\{x \in G : d(x)=t(x)\}$, as $G$-set with the action given by $g\cdot x = gxg^{-1}$, whenever $d(x)=t(x)=d(g)$. 

\item\label{subgroupoid} A \emph{subgroupoid} of $G$ is a subset of $G$ closed under products and inverses. 
If $H$ is a subgroupoid of $G$, then $H$ is a groupoid by itself in the natural way. 
    
    A subgroupoid $H$ is said to be \emph{wide} in $G$ if $G_0=H_0$. In that case the sets of the form $gH=\{gh : h\in H, d(g)=t(h)\}$ with $g\in G$, form a partition of $G$. Denoting $G/H=\{gH : g\in G\}$, and setting $g\cdot hH=(gh)H$ we get a (left) action of $G$ on $G/H$. 
    A right action of $G$ on $H\backslash G=\{Hg : g\in G\}$ is defined similarly.   
        
    \item\label{fundamentalgroupoidaction} $X=G_0$ has a structure of $G$-set setting $X_e=\{e\}$ for each $e\in G_0$, and $g\cdot e=f$ whenever $g\in G$, $e=d(g)$ and $f=t(g)$.
\end{enumerate}}
\end{examples}

\subsection{Rings with local units}

In this paper, we do not assume that rings have a unit. Instead, we assume that every ring has local units, and their modules are unital. We remind the definitions of rings with local units and unital modules.

Let $R$ be a ring and let $\E(R)$ denote the set of idempotents of $R$. A right $R$-module is said to be \emph{unital} if  $MR=M$. A \emph{set of local units} of $R$ is a subset $U$ of $\E(R)$ satisfying the following condition: for every finite subset $F$ of $R$ there is $u\in U$ such that $r=ru=ur$ for every $r\in F$ (see e.g. \cite{AnhMarki1987}).
We say that $R$ \emph{has local units} if it has a set of local units, equivalently if $\E(R)$ is a set of local units of $R$.

We consider the following order in $\E(R)$:
    $$u\le v \quad \Leftrightarrow \quad uv=u=vu$$

The following obvious remark will be used frequently.

\begin{remark}\label{ArgumentoUnidades}
If $U$ is a set of local units of a ring $R$ and $M$ is a unital right $R$-module, then for every finite subset $F$ of $M$ there is $u\in U$ such that $mu=m$ for every $m\in F$. 
In particular, if $F$ is a finite subset of $\E(R)$, then there is $u\in U$ such that $e\le u$ for every $e\in F$.
\end{remark}


\subsection{Graded rings and modules}

Let $A$ be an abelian group and let $X$ be a non-empty set. An $X$-\emph{grading} of $A$ is simply a decomposition $A=\oplus_{x\in X} A_x$, with each $A_x$ a subgroup of $A$.
    In that case, every $a\in A$ has a unique expression as $a=\sum_{x\in X} a_x$, with $a_x\in A_x$ and the  \emph{support} of $a$ (with respect to the giving $X$-grading) is the finite set $\Supp(a)=\{x\in X : a_x\ne 0\}$. Moreover, $a_x$ is called the \emph{homogeneous component} of $a$ of degree of $x$, and $A_x$ is called the \emph{homogeneous component} of $A$ of degree $x$. Moreover, if $Z$ is a subset of $X$, then we set $a_Z=\sum_{z\in Z} a_z$ and $A_Z=\oplus_{z\in Z} A_z$.

\emph{In the remainder of the section $G$ is a groupoid and $R$ is a ring.} 


A $G$-\emph{graded ring} is a ring $R$ with a $G$-grading $R = \oplus_{g\in G} R_g$ of its underlying additive group, such that 
	\begin{equation}\label{GradingCondition}
		R_g R_h\subseteq \begin{cases} 
			R_{gh}, & \text{if } gh \text{ is defined};\\ 
			\{0\}, & \text{otherwise}. \end{cases}
	\end{equation}
When $R_gR_h=R_{gh}$ whenever $gh$ is defined, we say that $R$ is \emph{strongly graded}.

We say that a $G$-graded ring $R$ has \emph{graded local units} if $\E(R_{G_0})$ contains a set of local units of $R$. Observe that this is equivalent to the following condition: for every $e\in G_0$, $R_e$ has local units and for every $g\in G$, $R_g$ is a unital module as left $R_{t(g)}$-module and as right $R_{d(g)}$-module. 

\begin{convention}
Throughout this paper all rings have local units, all graded rings have graded local units and all modules are unital.    
\end{convention}

From now on $R$ is a $G$-graded ring and $X$ is a $G$-set.
An $X$-\emph{graded right} $R$-\emph{module} is a right $R$-module $M$ with an $X$-grading $M=\oplus_{x\in X} M_x$ such that 
    $$M_x R_g \subseteq \begin{cases} 
    M_{x \cdot g}, & \text{if } x\cdot g \text{ is defined};\\ 
    \{0\}, & \text{otherwise}; \end{cases}$$
If $M$ and $N$ are $X$-graded right $R$-modules, then a homomorphism of $R$-modules $f:M\to N$ is said to be a \emph{graded homomorphism} if $f(M_x)\subseteq N_x$ for every $x\in X$. 
When we talk of $G$-graded right $R$-module we are considering $G$ as a $G$-set with the right regular action (\Cref{ExGroupoids}\eqref{RegularAction}).

Left $X$-graded $R$-modules and homomorphism between them are defined similarly, and for $G$-graded left $R$-modules, $G$ is considered as $G$-set with the left regular action.

Let now $H$ be a second groupoid, let $S$ be an $H$-graded ring, and let $Y$ be an $H$-set. 
An $(X,Y)$-\emph{bigraded} $(R,S)$-\emph{bimodule} is an $(R,S)$-bimodule $P$ together with an $X\times Y$-grading
    $$P = \bigoplus_{x\in X,y\in Y} {_xP_y}$$
such that ${_xP}=\oplus_{y\in Y} {_xP_y}$ is an $Y$-graded $S$-module for each $x\in X$ and
$P_y=\oplus_{x\in X} {_xP_y}$ is an $X$-graded $R$-module. 
If $R=S$, $G=H$ and $X=Y$ we simply say that $P$ is an $X$-\emph{bigraded} $R$-\emph{bimodule}.

\begin{convention}
We will denote the degree on the side where the ring acts, i.e. if $M$ is a graded left module, then the homogenous component of degree $x$ is denoted $_xM$. Accordingly, if $M$ is a right (respectively, left) $X$-graded module and $m\in M$, then the homogeneous component of $m$ of degree $x$ is denoted $m_x$, (respectively, ${_xm}$). 
Similarly, if $P$ is an $(X,Y)$-bigraded bimodule, $p\in P$, $x\in X$ and $y\in Y$, then $_xp_y$ denotes the homogeneous component of $p$ of degree $(x,y)$.
\end{convention}

Let $M$ be a $G$-graded right $R$-module and let $x\in X$. 
The $x$-\emph{shift} of $M$ is the $X$-graded right $R$-module
$$(x)M = \bigoplus_{g\in G, t(g)\in E_x} M_g, \text{ with } 
    (x)M_y = \bigoplus_{g\in G, x\cdot g=y} M_g.$$
Similarly, if $M$ is a $G$-graded left $R$-module, then the $x$-th shift of $M$ is the $X$-graded left $R$-module 
    $$M(x)=\bigoplus_{g\in G, d(g)\in E_x} M_g, \text{ with } 
    _yM(x) = \bigoplus_{g\in G, g\cdot x=y} M_g.$$    

\begin{remark}\label{CommonAMdR}
Recall that if $S$ is a semigroup then an $S$-graded ring is a ring $R$ together with an $S$-grading $R=\oplus_{s\in S} R_s$ such that $R_sR_t\subseteq R_{st}$ for every $s,t\in S$. If $S$ has a zero, i.e. an element $0$ such that $0s=s0=0$ for every $s\in S$, then an $S$-graded ring is said to be contracted if $S_0=0$. 
Observe that if $G$ is a groupoid then an $G$-graded ring is the same as a contracted $G^0$-graded ring (see \Cref{GroupoidsSemigroups}). 

Suppose that we are in the common ground of this paper with \cite{AbramsMeninidelRio1994}, i.e. suppose that $G$ is a groupoid, $X=G$ with left regular action (cf. \Cref{ExGroupoids}\eqref{RegularAction}) and $R$ is a unital $G$-graded ring considered as a contracted $G^0$-graded ring. 
Then $R(x)$ coincides with the module $P^x$ defined in \cite[Definition~2.9]{AbramsMeninidelRio1994}.
\end{remark}

\begin{examples}\label{ExGradedRings}{\rm
\begin{enumerate}
\item\label{GroupoidRing} If $G$ is groupoid and $R$ is a ring, then the groupoid ring $RG=\oplus_{g\in G} Rg$ is a $G$-graded ring in the natural way.
This ring is also the contracted semigroup ring of $G^0$ (see  \Cref{CommonAMdR}).
	
\item\label{Matrix} Let $I$ be a set and let $G=I\times I$. We endow $G$ with a structure of groupoid by declaring that the defined products are as follows:
        $$(i,j)(j,k)=(i,k), \text{ for } i,j,k\in I.$$
Consider the ring $\FM_I(R)$ formed by the $I\times I$ matrices $(a_{i,j})_{i,j\in I}$ with entries in $R$ and having finitely many non-zero entries. 
Declaring homogeneous of degree $(i,j)$, the matrices vanishing outside the $(i,j)$'th entry defines a $G$-grading on $\FM_I(R)$.

Moreover, $I$ is a $G$-set with defined products as follows:
	\[(y,x)\cdot x = y\]
Then $M^{(I)}$, i.e. the direct sum of $|I|$ copies of $M$, is an $I$-graded right $\FM_I(R)$-module with the natural row-times-matrix product and $I$-grading. Observe that this is a unital module if and only if so is $M$. However, if $I$ is infinite, then the direct product $M^I$ of $|I|$ copies of $M$, has a natural structure of $\FM_I(R)$-module but it is neither unital nor graded.

\item Let $G$ be an arbitrary groupoid, let $R$ be a $G$-graded ring, let $H$ be a wide subgroupoid of $G$ and consider $G/H$ as a left $G$-set as in \Cref{ExGroupoids}\eqref{subgroupoid}. Then $R=\oplus_{A\in G/H} R_A$ defines a structure of $G/H$-graded left $R$-module on $R$.

\item\label{Rgorro} Let $R$ be a $G$-graded ring and let $X$ be a $G$-set. Then for every $x,y\in X$
    $$(x)R_y = \bigoplus_{g\in G, x\cdot g=y} R_g = \bigoplus_{g\in G, x=g\cdot y} R_g = {_xR(y)},$$
and we denote
    $$\widehat R = \bigoplus_{x\in X} (x)R = \bigoplus_{x\in X} R(x).$$
This is an $X$-bigraded $R$-bimodule with 
    $$_x\widehat R_y = {_xR(y)}= (x)R_y.$$
\end{enumerate}
}\end{examples}

Observe that the graded rings $\FM_I(R)$ and $RG$ in the first two items of \Cref{ExGradedRings} have graded local units if and only if $R$ has local units. In particular, if $R$ is unital, then both have graded local units, but they are not unital in general. These examples are our motivation for considering rings with local units instead of unital rings.

\subsection{Categories of graded modules}

In this section $G$ is a groupoid, $R$ is a $G$-graded ring, $U$ is a set of graded local units of $R$ and $X$ is a $G$-set. 

The category of $X$-graded right (unital) $R$-modules with graded homomorphism is denoted $\rRGX$. 
The category of $X$-graded left $R$-modules is defined analogously and denoted $\lRGX$. 
If $X=G$ with the regular right (respectively, left) action, then $\rRGX$ (respectively, $\lRGX$) is denoted $\grr  R$ (respectively, $R\grl$).
If $X$ is the trivial $G$-set, then $\rRGX=\Mod\text{-}R$, the category of right (unital) $R$-modules.

For every $u\in U$ and $x\in X$, let us denote
    $$u^x=\sum_{e\in E_x} u_e\in (x)R_x = {_xR(x)}$$  
The following obvious observation will be useful: If $M\in\rRGX$ and $m\in M_x$, then $mu=mu^x$. In particular, if $mu=m$, then $mu^x=m$.

If $M$ is a right $R$-module, then $(Mu)_{u\in U}$ and $(Mv\to Mu,m\mapsto mu)_{u\le v}$ form an inverse system of additive groups; its inverse limit is
    $$\varprojlim_{u\in U} Mu = \left\{(m_u)_{u\in U} \in \prod_{u\in U} Mu: m_vu=m_u \text{ for every } u,v\in U \text{ with } u \le v\right \}.$$


\begin{lemma}\label{HomShift}
Let $M\in \rRGX$ and $x\in X$. Then the map 
    \[\Hom_{\rRGX}((x)R,M) \to \varprojlim_{u\in U} M_xu, \quad 
    f \mapsto (f(u^x))_{u\in U}\]
is an isomorphism of additive groups.
The inverse associates $(m_u)_{u\in U}$ with the morphism $f$ given by  $f(r)=m_ur$ if $r\in uR$.
\end{lemma}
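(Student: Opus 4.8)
The plan is to construct the map and its inverse explicitly, then verify they are mutually inverse homomorphisms. First I would check that the module $(x)R$ is generated by the homogeneous idempotents $u^x$ as $u$ ranges over $U$: indeed $(x)R = \bigoplus_{g : t(g)\in E_x} R_g$ and each homogeneous piece $R_g$ with $t(g)\in E_x$ satisfies $R_g = u^x R_g$ for suitable $u\in U$ (using that $R$ has graded local units and \Cref{ArgumentoUnidades}), while conversely $u^x R \subseteq (x)R$ since $u^x\in (x)R_x$ and $(x)R$ is a submodule. So $\sum_{u\in U} u^x R = (x)R$, and a morphism $f$ out of $(x)R$ is determined by its values $f(u^x)$, which must lie in $f((x)R_x)\subseteq M_x$ and satisfy $f(u^x)v = f(u^x v^x) = f(u^x)$ whenever $u\le v$ (since $u^x v^x = u^x$ for $u\le v$). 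This shows the assignment $f\mapsto (f(u^x))_{u\in U}$ lands in $\varprojlim_{u\in U} M_x u$ and is clearly additive.

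Next I would address well-definedness of the proposed inverse: given $(m_u)_{u\in U}\in\varprojlim M_x u$, define $f(r) = m_u r$ whenever $r\in u^x R$ (equivalently $r\in uR$, since for $r\in (x)R$ one has $ur = u^x r$). The key point is independence of the choice of $u$: if $r\in u^x R\cap w^x R$, pick $v\in U$ with $u,w\le v$ (\Cref{ArgumentoUnidades}); then $r = v^x r$ and $m_u r = m_u v^x r = m_u(v^x r)$, and since $m_v v^x = m_v$ and $m_v u = m_u$ gives $m_u r = (m_v u^x) r = m_v (u^x r) = m_v r$, and symmetrically $m_w r = m_v r$, so the values agree. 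Then $f$ is additive (for $r\in u^x R$, $r'\in w^x R$ use a common $v$), $R$-linear ($f(rs) = m_u(rs) = (m_u r)s = f(r)s$ for $r\in u^x R$, noting $rs\in u^x R$), and graded: if $r\in (x)R_y$ then $r\in u^x R$ for some $u$ and $f(r) = m_u r\in M_x R_{\text{(degree of }r\text{)}}\subseteq M_{x\cdot(\cdot)} = M_y$ provided $x\cdot g = y$ for the relevant $g$, which is exactly the condition defining $(x)R_y$.

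Finally I would check the two round trips. Starting from $f$, the inverse construction produces $r\mapsto f(u^x) r = f(u^x r) = f(r)$ for $r\in u^x R$, recovering $f$. Starting from $(m_u)_{u\in U}$, the forward map gives $f(u^x) = m_u u^x = m_u$ for each $u$ (using $m_u\in M_x u$, so $m_u u = m_u$ and hence $m_u u^x = m_u$), recovering the tuple. Hence the two maps are mutually inverse, and additivity was already noted, so the forward map is an isomorphism of additive groups.

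The main obstacle is the well-definedness of the inverse — making sure the formula $f(r) = m_u r$ does not depend on which $u$ with $r\in u^x R$ is chosen, and that the compatibility condition $m_v u = m_u$ for $u\le v$ in the inverse limit is exactly what is needed for this; everything else is a routine check once one is careful about when $ur$ equals $u^x r$ (which holds for $r$ homogeneous of degree in the shifted piece, hence for all $r\in (x)R$ by additivity). A secondary subtlety worth spelling out is that $(x)R$ is genuinely a \emph{unital} $R$-module, i.e. $(x)R = (x)R\cdot R$, which is needed for $(x)R$ to be an object of $\rRGX$ and is what the generation statement $\sum_u u^x R = (x)R$ establishes.
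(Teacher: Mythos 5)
Your proof is correct and takes essentially the same route as the paper, whose entire argument is the well-definedness of the inverse via a common upper bound $w\in U$ with $u,v\le w$ and the compatibility $m_wu=m_u$ — exactly your computation. One minor slip in an otherwise routine part: the condition for $(f(u^x))_{u\in U}$ to lie in $\varprojlim_{u\in U}M_xu$ is $f(v^x)u=f(u^x)$ for $u\le v$ (which follows from $v^xu=u^x$), not $f(u^x)v=f(u^x)$ as you verified; the latter identity is true but is not the defining relation of the inverse limit, though the correct one follows by the same kind of computation.
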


\begin{proof}
The only non-obvious part of the lemma is that the map $f:\varprojlim_{u\in U} M_xu\to \Hom_{\rRGX}((x)R,M)$ defined in the last sentence is well defined. 
To prove that we observe that, by \Cref{ArgumentoUnidades}, for every $r\in R$ there is $u\in U$ with $ur=r$ and if $v$ is another element of $U$ with $vr=r$, then there is $w\in U$ with $u,v\le w$. Then     
$$m_w r = m_w (ur) = (m_wu)r = m_u r,$$
and analogously, $m_w r = m_v r$. So $m_ur=m_vr$.
\end{proof}

\begin{remark}
Let $M\in \rRGX$, $x\in X$ and $m\in M_x$. Then the element of $\Hom_{\rRGX}((x)R,M)$ associated to $(mu)_{u\in U}\in \varprojlim_{u\in U} M_xu$ in the isomorphism of \Cref{HomShift} is the map 
\begin{equation}\label{lambdam}
	\lambda_m:(x)R\to  M, \quad r  \mapsto  mr.
\end{equation}
In particular, if $g\in G$ and $x\in X_{t(g)}$, then $R_g\subseteq (g\cdot x)R_x$ and so, if $s\in R_g$ then
\begin{equation}\label{leftmultmap}
	\lambda_s\in \Hom_{\rRGX}((x)R,(g\cdot x)R).
\end{equation}

\end{remark}


\begin{theorem}\label{GCat}
$\rRGX$ is a Grothendieck category with $\{(x)R : x\in X\}$ as a family of generators. Moreover, for every $x\in X$ and $u\in U$, $u\cdot (x)R=\{ur : r\in (x)R\}$ is a finitely generated projective object of $\rRGX$ and $(x)R=\varinjlim_{u\in U} u\cdot (x)R$.
\end{theorem}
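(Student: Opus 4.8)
The plan is to verify the three assertions — that $\rRGX$ is a Grothendieck category, that each $u\cdot(x)R$ is finitely generated projective, and that $(x)R=\varinjlim_{u\in U} u\cdot(x)R$, with $\{(x)R:x\in X\}$ then following as a generating family — in that order, since the later claims feed into the verification of the generator property.

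First I would check the Grothendieck axioms for $\rRGX$. This is routine bookkeeping: kernels and cokernels of graded homomorphisms are computed componentwise, so $\rRGX$ is abelian; arbitrary direct sums exist and are formed componentwise; and the AB5 condition (exactness of filtered colimits) is inherited from $\Modr R$ since the forgetful functor to $\Modr R$ and the componentwise structure let one transport the computation. The one point that needs a word is that $\rRGX$ consists of \emph{unital} modules, so one must note that a filtered colimit, a direct sum, or a subobject/quotient of unital modules is again unital — this is immediate from the definition of unital together with \Cref{ArgumentoUnidades}. So far there is no obstacle.

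Next, the finitely generated projective claim. For $u\in U$ and $x\in X$, the object $u\cdot(x)R$ is a direct summand of $(x)R$ as a graded module: writing $u^x=\sum_{e\in E_x}u_e\in(x)R_x$ (as in the text), right multiplication by $u^x$ is an idempotent graded endomorphism of $(x)R$ with image $u\cdot(x)R$ (using that $mu=mu^x$ for $m$ homogeneous of degree $x$, hence more generally $ru=ru^{x}$ componentwise — here one checks degrees match because $e\cdot g$, when defined, has $t(g)=e$). Since $(x)R$ is projective once we know it is a generator, we instead argue directly via \Cref{HomShift}: that lemma identifies $\Hom_{\rRGX}((x)R,M)$ with $\varprojlim_{u\in U} M_xu$, and precomposition with the inclusion $u\cdot(x)R\hookrightarrow (x)R$ corresponds to evaluation at the $u$-th coordinate, which is $M\mapsto M_xu$; this functor is exact (it is a direct summand of the exact forgetful functor, since $M_xu$ is a direct summand of $M$ as abelian groups), hence $u\cdot(x)R$ is projective. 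Finite generation: $u\cdot(x)R$ is generated by the single homogeneous element $u^x$, because for any homogeneous $r\in(x)R$ one has $r\in u R$ for a suitable reason — more carefully, $u\cdot(x)R = u^x R$ since $uR=u^xR$ when restricted to the relevant components, so it is cyclic, hence finitely generated.

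Then $(x)R=\varinjlim_{u\in U} u\cdot(x)R$: the set $U$ is directed under $\le$ by \Cref{ArgumentoUnidades}, the maps $u\cdot(x)R\to v\cdot(x)R$ for $u\le v$ are the inclusions (since $u\le v$ gives $u^xv^x=u^x$, so $uR\subseteq vR$ componentwise), and $\bigcup_{u\in U} u\cdot(x)R=(x)R$ because $(x)R$ is unital, so every element lies in some $u\cdot(x)R$; hence the colimit, which is the directed union, is all of $(x)R$. Finally, for the generator property: given $M\in\rRGX$ and $0\ne m\in M$, pick a homogeneous component $m_x\ne 0$; then $\lambda_{m_x}\colon (x)R\to M$ of \eqref{lambdam} sends $u^x\mapsto m_xu=m_x\ne 0$ for $u$ large, so $m_x\in\Imagen\lambda_{m_x}$, which shows $\{(x)R:x\in X\}$ generates. (Alternatively, combining the last two points: $\{u\cdot(x)R\}$ is a generating set of projectives, and each $(x)R$ is a colimit of these, so $\{(x)R\}$ generates too.)

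The main obstacle I anticipate is not any single deep step but the careful handling of \emph{unitality} and of the degree-matching in $(x)R$ throughout: one must consistently check that the submodules, quotients, colimits and idempotent images that appear stay inside $\rRGX$ (unital and graded), and that identities like $ru=ru^x$ and $uR=u^xR$ on $(x)R$ hold at the level of homogeneous components, where the indexing over $\{g\in G: t(g)\in E_x\}$ interacts with $G_0$ via $d,t$. None of this is hard, but it is where a proof can go wrong if done carelessly.
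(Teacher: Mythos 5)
Your proof is correct and follows essentially the same route as the paper: componentwise verification of the Grothendieck axioms, the maps $\lambda_{m}$ for homogeneous $m$ to establish the generating family, the directed union $(x)R=\bigcup_{u\in U} u\cdot(x)R$, and projectivity of $u\cdot(x)R$ via the identification $\Hom_{\rRGX}(u\cdot(x)R,M)\cong M_xu$ (the paper phrases this as lifting $g(u^x)$ through a surjection, which is the same computation). The only blemishes are harmless because you abandon them before using them: the claim that \emph{right} multiplication by $u^x$ realizes $u\cdot(x)R$ as a direct summand should be \emph{left} multiplication by $u$ (right multiplication by $u^x$ has image $(x)R\,u^x$ and is not a right-module endomorphism), and the aside that ``$(x)R$ is projective once we know it is a generator'' is false in general, since $(x)R$ is only locally projective.
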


\begin{proof}
Clearly $\Hom_{\rRGX}(M,N)$ is a subgroup of the additive group $\Hom_R(M,N)$, for every $M,N\in \rRGX$. 
Furthermore the module theoretical coproducts of objects in $\rRGX$ are  in $\rRGX$, and the kernel and cokernel of every morphism in $\rRGX$ are also in $\rRGX$. This fails for the product. Instead, the product in $\rRGX$ of $(M_i)_{i\in I}$ is the direct sum of the additive subgroups $\prod_{i\in I} (M_i)_x$ with $x$ running on $X$. This proves that $\rRGX$ is an abelian category.

If $f: M \rightarrow N$ is a non-zero morphism, then there exists a homogeneous element $m \in M_x$ such that $f(m) \ne 0$. 
Then $\lambda_m\in \Hom_{\rRGX}((x)R,M)$ and $m=mu$ for some $u\in U$. 
Moreover, $(f\circ \lambda_m)(u)=f(mu)=f(m)\ne 0$, so that $f\circ \lambda_m\ne 0$.
This proves that $\{(x)R:x\in X\}$ is a set of generators of $\rRGX$.

Let $x\in X$ and $u\in U$. Then $uR_g\subseteq R_g$ for every $g\in G$ and hence $u(x)R$ is a submodule of $(x)R$. Furthermore, if $y\in X$ and $r\in (x)R$, then $(ur)_y=ur_y$. Thus $u(x)R$ is a graded submodule of $(x)R$.
Moreover, if $u\le v$, then $u(x)R\subseteq v(x)R$ and,
    $$(x)R = \bigcup_{u\in U} u(x)R = \varinjlim_{u\in U} u(x)R.$$

To prove that $u(x)R$ is projective, observe that $u^x\in (u(x)R)_x$ and $u(x)R=u^x(x)R$. Therefore, if $g:u(x)R\to N$ and $p:M\to N$ are morphisms in $\rRGX$ with $p$ surjective, then $g(u^x)=p(m)$ for some $m\in M_x$.
Let $f$ be the restriction of $\lambda_m$ to $u(x)R$. Then $f$ is a morphism in $\rRGX$ and $p\circ f=g$.
\end{proof}

\begin{remark}\label{RGorro}
By \Cref{GCat}, $\{u(x)R : x\in X, u\in U\}$ is a set of finitely generated projective generators of $\rRGX$ and each $(x)R$ is locally projective in the sense of \cite{AnhMarki1987}, i.e. is a direct limit of projective objects of $\rRGX$. 
Then the $X$-bigraded $R$-bimodule $\widehat{R}=\oplus_{x\in X} (x)R=\oplus_{x\in X} R(x)$ is a locally projective generator of both $\rRGX$ and $\lRGX$ (see \Cref{ExGradedRings}\eqref{Rgorro}).  
\end{remark}

\begin{remark}
Let $S$ be a semigroup and let $R$ be a unital $S$-graded ring. 
In \cite{AbramsMeninidelRio1994} it is proved that the category of $S$-graded $R$-modules is a Grothendieck category with a family $\{P^x : x\in S\}$ of finitely generated projective generators, where $P^x=1_{[x^{-1}x]}R$ and $1_{[x^{-1}x]}=\sum_{s\in S, xs=x} 1_s$. This result and \Cref{GCat} intersects in the case where $G$ is a groupoid, $S=G=X$ and $R$ has an identity. In that case the non-zero homogeneous components of $1$ form a complete set of orthogonal idempotents of $G$ and $\Supp(1)=\{e\in G_0 : R_e\ne 0\}$, so that $\{1_e : e\in G_0\}$ is a set of homogeneous local units. Moreover, for every $x\in G$, $P^x=1_{d(x)}R=1_{d(x)}\cdot (x)R$. 
\end{remark}



\section{\underline{Hom, tensor, and adjoint functors}\label{SectionAdjoints}}

In this section we define tensor-like functors and hom-like functors between categories of graded modules and prove that they cover all the adjoint functors between such categories. 

\subsection{Tensor-like functors}

\begin{convention}\label{convention}
In the remainder of the paper $G$ and $H$ are groupoids, $R$ is a $G$-graded ring, $S$ is an $H$-graded ring,  $X$ is a $G$-set, $Y$ is an $H$-set, and $P$ is an $(X,Y)$-bigraded $(R,S)$-bimodule.
Moreover, we fix a set $U$ of graded local units of $R$ and a set $V$ of graded local units of $S$. 
\end{convention}

For $M \in \rRGX$ and $N \in (G, X, R)-gr$, let $M \hotimes _R N$ denote the additive subgroup of $M \otimes_R N$ generated by $\{m \otimes n : m \in M_x, n \in {_xN}\}$.
Observe that $M\hotimes_R-$ defines an additive functor from $\lRGX$ to the category of abelian groups and similarly $-\hotimes_R N$ defines an additive functor from $\rRGX$ to the category of abelian groups.

For every $M\in \rRGX$, 
    $$M\hotimes_R P = \bigoplus_{y\in Y} M\hotimes_R P_y$$
defines an $Y$-grading.
This yields a functor
$$-\hotimes_R P:\rRGX\to \rSHY.$$
Similarly we have a functor
$$P\hotimes_S -:\lSHY\to \lRGX.$$

\begin{remark}\label{OtimesAssociative}
The bifunctor $\hotimes$ is associative in the following sense: If $M\in\rRGX$ and $N\in\lSHY$, then
		\[(M\hotimes_R P) \hotimes_S N\cong M\hotimes_R (P \hotimes_S N)\]
and this isomorphism is natural in $M$ and $N$.
\end{remark}

\begin{lemma}\label{TensorShift}
For every $x\in X$ and $N\in \lRGX$, the standard isomorphism $R\otimes N\to N$ restricts to an isomorphism $\phi_{x,N}:(x)R \hotimes _R N \simeq {_xN}$. In particular, $\widehat{R}\hotimes_R N\cong N$ in $\lRGX$.
If $P$ is an $(X,Y)$-bigraded $(R,S)$-bimodule, then
$\phi_{x,P}$ is a isomorphism of $Y$-graded right $S$-modules.
\end{lemma}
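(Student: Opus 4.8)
The plan is to realize $\phi_{x,N}$ as the restriction of the standard isomorphism $\mu_N\colon R\otimes_R N\to N$, $r\otimes n\mapsto rn$. First I would recall that $\mu_N$ is bijective whenever $R$ has local units and $N$ is unital: it is onto since $N=NR$, and if $\sum_i r_i\otimes n_i$ lies in its kernel, then picking $u\in U$ with $ur_i=r_i$ for all $i$ gives $\sum_i r_i\otimes n_i=\sum_i(ur_i)\otimes n_i=u\otimes\sum_i r_i n_i=u\otimes 0=0$.

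Next I would observe that $(x)R=\bigoplus_{g:\,t(g)\in E_x}R_g$ and its complement $\bigoplus_{g:\,t(g)\notin E_x}R_g$ are both right ideals of $R$, since $t(gh)=t(g)$; hence $(x)R$ is a direct summand of $R$ as a right $R$-module, so $(x)R\otimes_R N\to R\otimes_R N$ is a split monomorphism and, composing with $\mu_N$, we obtain a monomorphism $(x)R\otimes_R N\to N$, $a\otimes n\mapsto an$. Restricting it to the subgroup $(x)R\hotimes_R N$ defines $\phi_{x,N}$ and makes it injective. To identify its image I would use that for a generator $a\otimes n$ with $a\in R_g$, $t(g)\in E_x$ and $n\in{}_{x\cdot g}N$, the translation identity $x\cdot g=g^{-1}\cdot x$ yields $g\cdot(x\cdot g)=t(g)\cdot x=x$, whence $an\in R_g\cdot{}_{x\cdot g}N\subseteq{}_xN$; so $\Imagen\phi_{x,N}\subseteq{}_xN$. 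Conversely, given $n\in{}_xN$, choose $u\in U$ with $un=n$; then $u^x=\sum_{e\in E_x}u_e\in(x)R_x$ and $u^xn=un=n$ (the components $u_e$ with $e\notin E_x$ annihilate $n$, the remaining ones fix its degree), so $n=\phi_{x,N}(u^x\otimes n)$. Thus $\phi_{x,N}$ is an isomorphism of abelian groups.

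For the ``in particular'' clause I would use $\widehat R=\bigoplus_{x\in X}(x)R$ (as a left $R$-module with ${}_x\widehat R=(x)R$) together with the fact that $\hotimes_R$ commutes with coproducts, so that the $\phi_{x,N}$ assemble into a bijection $\widehat R\hotimes_R N\to N$; it is left $R$-linear because $\phi_{x,N}((ra)\otimes n)=(ra)n=r(an)$ and homogeneous by construction, hence an isomorphism in $\lRGX$. For the bigraded refinement, with $N=P$ an $(X,Y)$-bigraded $(R,S)$-bimodule: the right $S$-action $(a\otimes p)s=a\otimes ps$ on $(x)R\hotimes_R P$ is carried by $\phi_{x,P}$ to right multiplication by $s$ on ${}_xP$, since $\phi_{x,P}((a\otimes p)s)=(ap)s=\phi_{x,P}(a\otimes p)\,s$; and decomposing $P=\bigoplus_{y\in Y}P_y$ as a left $R$-module exhibits $\phi_{x,P}$ as $\bigoplus_{y}\phi_{x,P_y}$ with $\phi_{x,P_y}\colon(x)R\hotimes_R P_y\xrightarrow{\sim}{}_xP_y$, so it preserves the $Y$-grading. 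Hence $\phi_{x,P}$ is an isomorphism of $Y$-graded right $S$-modules.

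The computations are all routine once the setup is in place; the only step that needs genuine care is the local-units bookkeeping — verifying that $\mu_N$ is bijective and that $(x)R$ splits off $R$ as a right module — so that ``restricting the standard isomorphism'' actually makes sense; after that one only has to track degrees of homogeneous components.
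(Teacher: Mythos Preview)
Your proof is correct and follows essentially the same approach as the paper's: both verify that the multiplication map $R\otimes_R N\to N$ is bijective via the standard local-units argument, then check that the image of $(x)R\hotimes_R N$ is exactly ${}_xN$ by the same degree computation and the same ``pick $u\in U$ with $un=n$, then $u^x\otimes n$ is a preimage'' trick. Your direct-summand observation (that $(x)R$ and its complement are both right ideals since $t(gh)=t(g)$) is a clean explicit justification of why the restriction is injective, a point the paper leaves implicit in the word ``restricts''; and you also spell out the $\widehat R$ and bigraded statements, which the paper omits. One tiny slip: since $N$ is a \emph{left} module, surjectivity of $\mu_N$ uses $N=RN$, not $N=NR$.
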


\begin{proof}
First of all observe that the natural map 
		\[\phi:R\otimes_R M \to M, r\otimes m \mapsto rm\]
is an isomorphism of additive groups. Indeed, the surjectivity is a consequence of the assumption that $M$ is unital, and the injectivity follows from the assumption that $R$ has local units because $\sum_{i=1}^n r_i\otimes m_i$ belongs to the kernel of the map, then taking an idempotent $u$ such that $ur_i=r_i$ for every $i$, it follows that
	\[\sum_{i=1}^n r_i\otimes m_i = \sum_{i=1}^n ur_i \otimes um_i=u\otimes \sum_{i=1}^n r_im_i=u\otimes 0=0.\]
	
So, it suffices to show that $\phi((x)R\hotimes _R N)={_xN}$. To that end, if $0\ne r\in R_g\cap (x)R_y$ and $n\in {_yN}$, then $x\cdot g=y$, so that $g\cdot y=x$ and hence $\phi(r\otimes n)=rn\in {_xN}$. Conversely, if $n\in {_xN}$, then $n=un$ for some $u\in U$. Moreover, $u^x\in (x)R_x$ and $un=u^xn$. Thus $u^x\otimes n\in (x)R\hotimes_R N$ and $n=\phi(u^x\otimes n)$.
\end{proof}

\begin{remark}\label{NaturalityTensorShift}
The inverse of $\phi_{x,N}$ maps $n\in {_xN}$ to $u^x\otimes n$, if $un=u$ with $u\in U$.

The isomorphism $\phi_{x,N}$ of \Cref{TensorShift} is natural in both variables in the following sense: If $f:N\to N'$ is a morphism in $\lRGX$, then it restricts to an additive morphism $f_x:{_xN}\to {_xN'}$ and the following diagram is commutative
$$\xymatrix{
(x)R\hotimes_R N \ar[d]_{1\otimes f} \ar[r]^{\quad\phi_{x,N}} & {_xN} \ar[d]^{f_x}\\
(x)R\hotimes_R N' \ar[r]_{\quad\phi_{x,N'}} & {_xN'}}$$
On the other hand, each $f\in \Hom_{\rRGX}((x)R,(x')R)$ induces an additive map $f':{_xN}\to {_{x'}N}$ defined by setting $f'(n)=f(u^x)n$ if $n\in {_xN}$, $u\in U$ and $un=n$. The proof that the map $f'$ is well defined is similar to that of \Cref{HomShift}. Then the following diagram is commutative:
\[\xymatrix{
	(x)R\hotimes_R N \ar[d]_{f\otimes 1} \ar[r]^{\quad \phi_{x,N}} & {_xN} \ar[d]^{f'}\\
	(y)R\hotimes_R N \ar[r]\ar[r]_{\quad \phi_{x',N}} & {_{x'}N}}\]
\end{remark}

The first part of \Cref{TensorShift} and \Cref{NaturalityTensorShift} yields the following result.

\begin{proposition}\label{RGorroEquiv}
$\widehat{R}\hotimes_R-$ (respectively, $-\hotimes_R \widehat{R}$)  is naturally isomorphic to the identity functor of $\lRGX$ (respectively, $\rRGX$).
\end{proposition}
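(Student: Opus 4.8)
The plan is to deduce this directly from \Cref{TensorShift} together with the naturality statements in \Cref{NaturalityTensorShift}, using the fact that $\widehat R=\oplus_{x\in X}(x)R$ as an $X$-bigraded $R$-bimodule (\Cref{ExGradedRings}\eqref{Rgorro}). I will treat the functor $\widehat R\hotimes_R-$ on $\lRGX$; the case of $-\hotimes_R\widehat R$ on $\rRGX$ is entirely symmetric. First I note that, since $\hotimes$ commutes with direct sums in each argument and $\widehat R=\oplus_{x\in X}(x)R$ as a left $R$-module (with the bigrading bookkeeping of \Cref{ExGradedRings}\eqref{Rgorro}), for every $N\in\lRGX$ there is a decomposition
\[
\widehat R\hotimes_R N=\bigoplus_{x\in X}\bigl((x)R\hotimes_R N\bigr),
\]
and this is compatible with the $X$-grading on the left-hand side coming from the first (i.e. the $R$-side) index of the bigrading of $\widehat R$: the summand $(x)R\hotimes_R N$ sits in degree $x$.

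Next I apply \Cref{TensorShift}: for each $x$ the standard isomorphism $R\otimes_R N\to N$ restricts to an isomorphism $\phi_{x,N}\colon(x)R\hotimes_R N\xrightarrow{\ \sim\ }{}_xN$ of additive groups, and moreover it respects the relevant module structure. Summing over $x\in X$ gives an isomorphism of abelian groups
\[
\Phi_N=\bigoplus_{x\in X}\phi_{x,N}\colon \widehat R\hotimes_R N=\bigoplus_{x\in X}\bigl((x)R\hotimes_R N\bigr)\ \longrightarrow\ \bigoplus_{x\in X}{}_xN=N .
\]
I must check that $\Phi_N$ is a morphism in $\lRGX$, i.e. that it is left $R$-linear and carries degree $x$ to degree $x$. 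The degree claim is immediate from the paragraph above: the $x$-summand on the left is $(x)R\hotimes_R N$ and $\phi_{x,N}$ lands it in ${}_xN$. For $R$-linearity, $\Phi_N$ is the restriction of the standard isomorphism $R\otimes_R N\to N$, $r\otimes n\mapsto rn$, which is left $R$-linear for the left $R$-action on the first factor; since the left $R$-module structure on $\widehat R=\oplus_x(x)R$ is exactly the one inherited from $R$ on each shift, $\Phi_N$ is $R$-linear. Hence $\Phi_N$ is an isomorphism in $\lRGX$.

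Finally I verify naturality of $\Phi_N$ in $N$, which is where a little care is needed. Given a morphism $f\colon N\to N'$ in $\lRGX$, the functor $\widehat R\hotimes_R-$ sends it to $1_{\widehat R}\otimes f$, which respects the decomposition $\oplus_x\bigl((x)R\hotimes_R N\bigr)$ and restricts on the $x$-summand to $1_{(x)R}\otimes f$; meanwhile $f$ itself restricts to $f_x\colon{}_xN\to{}_xN'$. The commutativity of the first square in \Cref{NaturalityTensorShift} says precisely that $\phi_{x,N'}\circ(1\otimes f)=f_x\circ\phi_{x,N}$ for each $x$, and summing these over $x\in X$ yields $\Phi_{N'}\circ(1_{\widehat R}\otimes f)=f\circ\Phi_N$. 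Thus $\Phi=(\Phi_N)_N$ is a natural isomorphism from $\widehat R\hotimes_R-$ to $\mathrm{Id}_{\lRGX}$, and the symmetric argument with $-\hotimes_R\widehat R$ (using the second square of \Cref{NaturalityTensorShift} and the right-hand version of \Cref{TensorShift}) gives the other half. The only mild obstacle is the bookkeeping of matching the bigrading of $\widehat R$ with the grading on the tensor product so that $\Phi_N$ is seen to be a \emph{graded} isomorphism; all of this is already packaged in \Cref{TensorShift} and \Cref{ExGradedRings}\eqref{Rgorro}, so no new work is required.
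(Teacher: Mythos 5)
Your proposal is correct and follows exactly the route the paper takes: the paper derives \Cref{RGorroEquiv} directly from the first part of \Cref{TensorShift} together with \Cref{NaturalityTensorShift}, and your write-up simply spells out the routine bookkeeping (the direct-sum decomposition of $\widehat R\hotimes_R N$ over $x\in X$, the grading and $R$-linearity of $\Phi_N$, and the summed naturality squares) that the paper leaves implicit. No issues.
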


Many of the concepts introduced for groupoid graded rings  make sense for semigroup graded rings. We present two examples showing that in this more general context our discussion fails from the beginning. For example, for a semigroup $S$ and an $S$-graded ring  $\oplus_{s\in S} (s)R$ and $\oplus_{s\in S} R(s)$ are different in general and \Cref{TensorShift} might fail.
We choose the examples having \Cref{GroupoidsSemigroups} in mind. Namely we consider contracted $S$-graded rings where $S$ is a semigroup with zero on which one of the following  two conditions holds but the other fails: (1) $S$ is an inverse semigroup; (2) the idempotents of $S$ are orthogonal.


\begin{example}\label{Ejemploeaf}[Non-inverse semigroup with orthogonal idempotents]
Consider the small category $C=\{\lcirclearrowright e\stackrel{a}{\longrightarrow} f\lcirclearrowleft\}$, i.e. $C$ has two objects $e$ and $f$ and $a:e\to f$ is the only non-identity arrow. Imitating the construction of the semigroup $G^0$ for $G$ a groupoid, we take the semigroup $S=C^0=\{e,a,f,0\}$, with $e$ and $f$ identified with its identities, products of compatible arrows in $C$ are products in $S$, and the remaining products are $0$.
Take a unital ring $K$. Then the contracted semigroup algebra of $S$ with coefficients in $K$ is $KC=Ke\oplus Ka\oplus Kf$ and can be identified in an obvious way with the ring $R$ of upper triangular $2\times 2$-matrices with entries in $K$ with the following grading
	\[R_e=\pmatriz{{cc} K & 0 \\ 0 & 0}, \quad R_a=\pmatriz{{cc} 0 & K \\ 0 & 0}, \quad 
	R_f=\pmatriz{{cc} 0 & 0 \\ 0 & K}.\]
Moreover, $N=R_a$ is a two-sided graded ideal which we consider as a $C$-graded left $R$-module. 
Observe that $1=e+f$ is the identity of $R$ and $\{e,f\}$ is a set of graded local units. 
Extending the definition of the shifts $(x)R$ and $\hotimes$ in the natural way, we have  
\[(e)R = R_e \oplus R_a, \quad (e)R_e=R_e, \quad (e)R_a=R_a, \quad (a)R=(a)R_a=R_f, \quad (f)R=(f)R_f=R_f.\] 
As $R$ is a unital ring, the natural homomorphism $R\otimes_R N\to N$ is an isomorphism. 
Thus 
	\[(e)R\hotimes_R N \cong R_e N_e + R_a N_a=0, \quad  
	(a)R\hotimes_R N \cong R_f N_a =0, 
	\quad (f)R \hotimes_R N \cong R_f N_f=0,\]
and hence, for the $C$-graded $R$-module $\widehat{R}_r=\oplus_{c\in C} (c)R$, we have $\widehat R_r \hotimes_R N=0$.
\\
We have emphasized that $\widehat{R}_r$ is a right $R$-module, because actually it is different from the $C$-graded left $R$-module $\widehat{R}_l=\oplus_{c\in C} R(c)$. Indeed, 
	\[R(e)={_eR(e)}=R_e, \quad R(a)={_aR(a)}=R_e, \quad 
R(f)=R_a\oplus R_f, \quad 
{_aR(f)}=R_a \quad 
{_fR(f)} = R_f,\]
so $\widehat{R}_r = R_e\oplus R_a \oplus R_f\oplus R_f \ne R_e\oplus R_e \oplus R_a \oplus R_f=\widehat{R}_l$.
\end{example}

\begin{example}[Inverse semigroup without orthogonal idempotents]

Consider the commutative semigroup $T=\{z,a,b\}$ with $a^2=a$, $b^2=b$ and all the other products are equal to $z$. 
Both $T$ and $S=T^0$ are inverse semigroups, but in $S$ the idempotents are not orthogonal, because $ab=z\ne 0$. 
Let $R$ be an $TS$-graded ring (i.e. a contracted $S$-graded ring). Considering $T$ as a left and right $T$-set, we have $(x)R=R=R(x)$ for every $x\in T$ and the gradings are given by 
	\[(x)R_y = {_xR(y)} = \bigoplus_{t\in T, xt=y} R_t.\]
So the possible non-zero homogeneous components of the $(t)R$'s are as follows:
	\[(z)R_z = R, \quad 
	(a)R_z=R_z\oplus R_b, \quad (a)R_a=R_a, \quad
	(b)R_z=R_z\oplus R_a, \quad (b)R_b=R_b.\]
To fix ideas, suppose that $R=KT$, the semigroup ring of $T$ with coefficients in a unital ring $K$. Observe that $R$ is a unital ring with identity $1_R=a+b-z$. 
Moreover, $R_z$, $R_a$ and $R_b$ are unital rings and $R_z$ is both a unital $R_a$-module and a unital $R_b$-module. Hence the identities of $R_z$, $R_a$ and $R_b$ is a set of graded local units of $R$.
Therefore, the natural map $R\otimes_R M\to M$ is an isomorphism and as each $(s)R=R$, for any left $R$-module $M$, we have 
\[(z)R\hotimes_R M \cong {_zM}, \quad 
(a)R\hotimes_R M\cong (R_z\oplus R_a){_zM}+(R_a){_aM}, \quad 
(b)R\hotimes_R M\cong (R_z\oplus R_b){_zM}+(R_b){_bM}. \]
For example, if $M=R$, considered as left $R$-module, then 
\[\widehat{R}\hotimes_R R \cong 
R_z\oplus [(R_z\oplus R_a)R_z+R_aR_a] \oplus 
[(R_z\oplus R_b){_zM}+R_bR_b]=3R_z\oplus R_a\oplus R_b\ncong R.\]
\end{example}

\subsection{Hom-like functors}

We now define covariant hom like-functors
    $$H(P_S,-):\rSHY \to \rRGX \qand H({_RP},-):\lRGX \to \lSHY$$
by setting 
    $$H(P_S,N)=\left(\bigoplus_{x\in X} \Hom_{\rSHY}({_xP},N)\right)R \qand 
    H({_RP},M)=S\left(\bigoplus_{y\in Y} \Hom_{\lRGX}(P_y,M)\right),$$
for $M\in \rRGX$ and $N\in \lSHY$, with gradings 
\begin{eqnarray*}
	H(P_S,N)_x&=&\{f\in \Hom_{\rSHY}({_xP},N) : f=fu \text{ for some } u\in U\}, \\
	{_yH({_RP},M)}&=&\{f\in \Hom_{\lRGX}(P_y,M) : f=vf \text{ for some } v\in V\}.
\end{eqnarray*}
We consider $H(P_S,N)$ as a submodule of $\Hom_S(P,N)_R$ by identifying $\Hom_{\rSHY}({_xP},N)$ with $\{f\in\Hom_{\rRGX}(P,N) : f({_yP})=0 \text{ for every } y\ne x\}$. 
Similarly, $H({_RP},M)$ is identified with a submodule of ${_S\Hom_R(P,M)}$. 
We take these submodules, instead of the full set of homomorphism, to get unital modules.

\subsection{Adjoint functors}

\begin{theorem}\label{proptensadj}
    Let $F: \rRGX \rightarrow \rSHY$ be a covariant functor. Then, the following are equivalent.
    \begin{enumerate}
        \item $F$ is left adjoint.
        \item $F$ is right exact and preserves direct sums.
        \item $F$ is naturally isomorphic to $- \hotimes _R P$ for some $(X,Y)$-bigraded $(R,S)$-bimodule $P$. In that case $H(P_S,-)$ is a right adjoint of $F$.
    \end{enumerate}
\end{theorem}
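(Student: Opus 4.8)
The plan is to establish the cycle of implications $(1)\Rightarrow(2)\Rightarrow(3)\Rightarrow(1)$, and in the course of proving $(3)\Rightarrow(1)$ to identify the right adjoint explicitly as $H(P_S,-)$. The implication $(1)\Rightarrow(2)$ is immediate: a left adjoint preserves every colimit that exists, and by \Cref{GCat} the category $\rRGX$ has all cokernels and all direct sums, so $F$ is right exact and preserves direct sums.

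For $(2)\Rightarrow(3)$ I would run a graded version of the Eilenberg--Watts theorem. Set $P:=F(\widehat R)$, where $\widehat R=\bigoplus_{x\in X}(x)R$ is the locally projective generator of \Cref{ExGradedRings}\eqref{Rgorro} and \Cref{RGorro}; a priori $P$ is only an object of $\rSHY$. The first task is to upgrade $P$ to an $(X,Y)$-bigraded $(R,S)$-bimodule. For $r\in R$, left multiplication by $r$ is an endomorphism of $\widehat R$ lying in $\rRGX$: it is right $R$-linear because left and right multiplications commute, and it preserves the $X$-grading of $\widehat R$ (as an object of $\rRGX$) because, via the left-module decomposition $\widehat R=\bigoplus_{x}R(x)$ of \Cref{ExGradedRings}\eqref{Rgorro}, left multiplication carries each summand $R(x)$ into itself. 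Thus $r\mapsto F(\text{left mult.\ by }r)$ is a ring homomorphism $R\to\End_{\rSHY}(P)$, making $P$ a left $R$-module; since $F$ preserves direct sums, $P=\bigoplus_{x}F((x)R)$, and putting ${_xP}:=F((x)R)$ one checks $R_h\cdot{_xP}\subseteq{_{h\cdot x}P}$ and compatibility with the $Y$-grading from $\rSHY$, so that $P$ is indeed an $(X,Y)$-bigraded bimodule, unital on both sides. By \Cref{TensorShift} we then have isomorphisms $\phi_{x,P}\colon(x)R\hotimes_R P\xrightarrow{\sim}{_xP}=F((x)R)$ in $\rSHY$. The crux is to verify that the family $(\phi_{x,P})_{x\in X}$ is natural for morphisms between shift modules $(x)R$; by \Cref{HomShift} every such morphism is, locally with respect to the local units of $R$, a left-multiplication map $\lambda_s$ as in \eqref{leftmultmap}, and for these the naturality square commutes by the associativity of $\hotimes$ (\Cref{OtimesAssociative}) together with the very definition of the left $R$-action on $P$. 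Finally, given an arbitrary $M\in\rRGX$, choose by \Cref{GCat} an exact sequence $\bigoplus_{i}(x_i)R\to\bigoplus_{j}(x_j)R\to M\to 0$; since $F$ and $-\hotimes_R P$ are both right exact and preserve direct sums, the natural isomorphism on the family of generators extends, by the standard argument, to a natural isomorphism $F\cong-\hotimes_R P$.

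For $(3)\Rightarrow(1)$ I would prove directly the graded tensor--hom adjunction. For $M\in\rRGX$ and $N\in\rSHY$, define $\Phi\colon\Hom_{\rSHY}(M\hotimes_R P,N)\to\Hom_{\rRGX}(M,H(P_S,N))$ by sending $\varphi$ to the map $m\mapsto(p\mapsto\varphi(m\otimes p))$ for $m\in M_x$ and $p\in{_xP}$, and $\Psi$ in the opposite direction by sending $\psi$ to the map $m\otimes p\mapsto\psi(m)(p)$. The verifications — that $\Phi(\varphi)$ takes values in the unital submodule $H(P_S,N)$ (here one uses $m=mu$ for some $u\in U$ when $m\in M_x$), that it is a graded $R$-homomorphism, that $\Psi(\psi)$ is a well-defined $R$-balanced graded $S$-homomorphism, that $\Phi$ and $\Psi$ are mutually inverse, and that both are natural in $M$ and $N$ — are all routine. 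This shows that $-\hotimes_R P$ is left adjoint to $H(P_S,-)$; composing with the isomorphism $F\cong-\hotimes_R P$ from $(3)$ gives that $F$ is left adjoint, with right adjoint $H(P_S,-)$.

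The main obstacle is $(2)\Rightarrow(3)$, and within it two delicate steps. First, that left multiplication on $\widehat R$ really defines morphisms in $\rRGX$: this is exactly where the groupoid hypothesis enters, the analogous statement being false for general semigroup gradings, as \Cref{Ejemploeaf} illustrates. Second, the naturality of $(x)R\hotimes_R P\cong F((x)R)$ in the variable $x$, which is what licenses passing from the generating shift modules to an arbitrary object of $\rRGX$.
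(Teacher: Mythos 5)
Your proposal is correct and follows essentially the same route as the paper: $(1)\Rightarrow(2)$ by general nonsense, $(2)\Rightarrow(3)$ via a graded Eilenberg--Watts argument with $P=\bigoplus_{x\in X}F((x)R)$ endowed with the left $R$-action coming from $F$ applied to the left-multiplication morphisms $\lambda_r$ of \eqref{leftmultmap}, followed by \Cref{TensorShift}, \Cref{NaturalityTensorShift} and the presentation of an arbitrary $M$ by shift modules, and $(3)\Rightarrow(1)$ via the explicit mutually inverse natural maps realizing the graded tensor--hom adjunction with $H(P_S,-)$. The only cosmetic difference is that you package the generators as $\widehat R$ and invoke \Cref{OtimesAssociative} where the paper works componentwise and cites \Cref{NaturalityTensorShift} directly; the substance is identical.
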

\begin{proof}
(1) implies (2) is well known.

(3) implies (1) Let $P$ be an $(X,Y)$-bigraded $(R,S)$-bimodule, let $M\in \rRGX$ and let $N\in \rSHY$. For every $\mu \in \Hom_{\rSHY}(M\hotimes P, N)$,
$\nu\in \Hom_{\rRGX}(M,H(P_S,N))$, $p\in P$ and $m\in M$ set
    $$[\alpha(\mu)(m)](p) = \sum_{y \in Y}\sum_{x \in X} \mu(m_x \otimes {_xp_y}) \qand
    \beta(\nu)(m \otimes p) = \nu(m)(p).$$
This defines mutually inverse maps
    $$\alpha: \Hom_{\rSHY}(M\hotimes_R P,N) \rightarrow \Hom_{\rRGX}(M,H(P_S,N))$$
and
    $$\beta: \Hom_{\rRGX}(M,H(P_S,N)) \rightarrow \Hom_{\rSHY}(M \hotimes_R  P,N)$$
which are natural in $M$ and $N$.

(2) implies (3) Suppose that $F$ is right exact and preserves direct sums. 
Let $P=\oplus_{x\in X} F((x)R)$ with the $(X,Y)$-bigrading $_xP_y = F((x)R)_y$. 
This is clearly a $Y$-graded right $S$-module. Moreover, if $r\in R_g\subseteq (x)R$, then, by \eqref{leftmultmap}, $\lambda_r\in \Hom_{\rRGX}((x)R,(g\cdot x)R)$ and hence $F(\lambda_r)\in \Hom_{\rSHY}({_xP},_{g\cdot x} P)$. Then $rp=F(\lambda_r)(p)$ defines a structure of $X$-graded left $R$-module on $P$.
Now it is easy to prove that this defines a structure of $(X,Y)$-bigraded $(R,S)$-bimodule. Furthermore, by \Cref{TensorShift},
$$F((x)R) = {_xP} \simeq (x)R \hotimes _R P.$$
Since $\{(x)R:x\in X\}$ is a set of generators of $\rRGX$, for every $M\in\rRGX$ there is a a short exact sequence in $\rRGX$
    $$\oplus_{i\in I} (x'_i)R \to \oplus_{j\in J} (x_j)R \to M \to 0$$
Moreover, by the naturality of the isomorphism ${_xP} \simeq (x)R \hotimes _R P$ on the first factor (see \Cref{NaturalityTensorShift}), and using that $F$ is right exact and preserves direct sums, we deduce that such short exact sequence yields a commutative diagram with vertical isomorphisms.
    $$\xymatrix{
\oplus_{i\in I} (x'_i)R \hotimes_R P \ar[d]_{\cong} \ar[r] &  \oplus_{j\in J} (x_j)R \hotimes_R P \ar[d]_{\cong} \ar[r] &  M \hotimes_R P \ar[r] &  0 \\
F\left(\oplus_{i\in I} (x'_i)R\right)
\ar[r] &  F\left(\oplus_{j\in J} (x_j)R\right)
\ar[r] &  F(M) \ar[r] &  0}$$
Then there is an isomorphism $M\hotimes_R P\cong F(M)$, which is natural in $M$, again by the naturality of the primitive isomorphisms.
\end{proof}

\begin{remark}\label{LeftAdjointP}
The proof of \Cref{proptensadj} is constructive because it proves that if $F:\rRGX\to\rSHY$ is a left adjoint functor, then $F$ is naturally isomorphic to $-\hotimes_R P$ with $P=\oplus_{x\in X} F((x)R)$. 
In the special case where $F$ is the identity functor of $\rRGX$, $P$ is the $X$-bigraded bimodule $\widehat R$ (see \Cref{RGorro}).
\end{remark}


The following corollary is an obvious consequence of \Cref{proptensadj}.

\begin{corollary}\label{Hrightadj}
The following are equivalent for a covariant functor $F:  \rSHY\rightarrow \rRGX$
    \begin{enumerate}
        \item $F$ is right adjoint.
        \item $F$ is left exact and preserves direct products.
        \item $F$ is naturally isomorphic to $H(P_S,-)$ for some $(X,Y)$-bigraded $(R,S)$-bimodule $P$.
    \end{enumerate}
\end{corollary}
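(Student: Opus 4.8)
The plan is to deduce the corollary from \Cref{proptensadj} by passing to left adjoints. The three conditions of \Cref{proptensadj} characterise precisely the left adjoint functors $\rRGX\to\rSHY$, and a functor $F:\rSHY\to\rRGX$ is a right adjoint if and only if it admits a left adjoint $L:\rRGX\to\rSHY$; so the strategy is to run such an $L$ through \Cref{proptensadj} and transport the conclusions back to $F$.

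For $(1)\Rightarrow(2)$ I would invoke the standard fact that right adjoints preserve all limits: preservation of products is then immediate, and preservation of kernels — hence left exactness — follows since in an abelian category a kernel is the equalizer of a morphism with the zero morphism. For $(1)\Leftrightarrow(3)$: assuming $(1)$, choose a left adjoint $L$ of $F$; by \Cref{proptensadj} applied to $L$ there is an $(X,Y)$-bigraded $(R,S)$-bimodule $P$ with $L\cong -\hotimes_R P$, and $H(P_S,-)$ is a right adjoint of $L$; as right adjoints of a fixed functor are unique up to natural isomorphism, $F\cong H(P_S,-)$, which is $(3)$. Conversely, if $F\cong H(P_S,-)$, then \Cref{proptensadj} asserts that $H(P_S,-)$ is a right adjoint of $-\hotimes_R P$, so $F$ is a right adjoint.

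The one genuinely non-formal step is $(2)\Rightarrow(1)$, and this is where I expect the (minor) obstacle. The tool is the Special Adjoint Functor Theorem. By \Cref{GCat}, applied with $(S,H,Y,V)$ in place of $(R,G,X,U)$, the category $\rSHY$ is a Grothendieck category, hence complete, well-powered, and equipped with an injective cogenerator; and a functor out of such a category preserves all limits exactly when it is left exact and preserves products, every limit being assembled from products and kernels. So SAFT produces a left adjoint $L:\rRGX\to\rSHY$ of $F$, i.e. $F$ is a right adjoint. The point needing care is just the verification of the SAFT hypotheses for $\rSHY$ — chiefly the existence of a small cogenerating set, which is exactly what the Grothendieck property of $\rSHY$ (the content of \Cref{GCat} transported to $\rSHY$) supplies. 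One could instead build the left adjoint $L$ directly, essentially reconstructing the argument behind SAFT in this concrete setting, but invoking SAFT is the economical route.
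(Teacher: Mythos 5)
Your proposal is correct and follows the route the paper intends: the paper offers no written proof, calling the corollary an obvious consequence of \Cref{proptensadj}, and your argument is precisely the standard way to extract it — transport $(1)\Leftrightarrow(3)$ through a left adjoint via \Cref{proptensadj} and uniqueness of adjoints, note that right adjoints preserve limits for $(1)\Rightarrow(2)$, and use the Special Adjoint Functor Theorem for $(2)\Rightarrow(1)$. Your identification of $(2)\Rightarrow(1)$ as the only genuinely non-formal step is accurate, and the SAFT hypotheses do hold since $\rSHY$ is Grothendieck by \Cref{GCat} (complete, well-powered, with an injective cogenerator), so the argument is complete.
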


\begin{example}\label{EjemploForgethful}
The forgetful functor $\mathcal{F}:\rRGX\to \Mod\text{-}R$ is obviously exact and preserves direct sums. Therefore $\mathcal{F}$ is naturally isomorphic to $-\hotimes_R P$ and its left adjoint is isomorphic to $H(P_R,-)$ where $P=\widehat{R}$,
considered as $(X,\{1\})$-bigraded module with $_xP_1=(x)R$. Here $\{1\}$ is the trivial $H$-set for any groupoid $H$.
Then for every $M\in \Mod\text{-}R$
    $$H(P_R,M)_x = H((x)R,M)=\{f\in \Hom_R((x)R,M) : f=fu \text{ for some } u\in U\} \cong M.$$
The latter isomorphism associates $f\in H(P_R,M)_x$ with $f(u^x)$, if $fu=f$. Its inverse associates $m\in M$ with the map $\lambda_m$ defined in \eqref{lambdam}.
Therefore $\mathcal{F}$ is right adjoint to the functor which on objects act as follows
	\[\mathcal{G}:\Mod\text{-}R\to \rRGX, \quad M \mapsto \mathcal{G}(M)=M^{(X)},\] 
with $\mathcal{G}(M)_x=\{(m_x)\in M^{(X)} : m_y=0 \text{ for every } y\in X\setminus \{x\}\}$ and $r(m_x)_{x\in X}=(rm_{g^{-1}\cdot x})_{x\in G}$ for $r\in R_g$, an on morphisms acts componentwise.
\end{example}

\Cref{SectionRestExt} contains more examples of adjoint functors between categories of graded modules.

The following lemma plays an important role in \Cref{SectionEquivalences}.

\begin{lemma}\label{HExact}
The functor $H(P_S,-)$  is exact if and only if $u({_xP})$ is projective in $\rSHY$ for every $x\in X$ and $u\in U$.
\end{lemma}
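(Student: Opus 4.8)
The plan is to reduce the exactness of $H(P_S,-)$ to a statement about the projectivity of each homogeneous piece ${_xP}$ of $P$ (localized by units), using the description of $H(P_S,-)$ as a direct sum of $\Hom$-groups indexed by $X$ and the identification, coming from \Cref{TensorShift}, of ${_xP}$ with $(x)R\hotimes_R P$. Since $H(P_S,-)$ is already right adjoint (to $-\hotimes_R P$, by \Cref{proptensadj}), it is automatically left exact; the content of the lemma is therefore about \emph{right} exactness, i.e. whether $H(P_S,-)$ preserves surjections.

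First I would unwind the definitions. For a short exact sequence $0\to N'\to N\to N''\to 0$ in $\rSHY$ and each fixed $x\in X$, applying $\Hom_{\rSHY}({_xP},-)$ is exact on the left, and the only question is surjectivity of $\Hom_{\rSHY}({_xP},N)\to\Hom_{\rSHY}({_xP},N'')$. Passing to the $u$-truncated pieces, the grading on $H(P_S,N)_x$ is $\{f : f=fu \text{ for some }u\in U\}$, and $f=fu$ precisely when $f$ factors through the quotient $({_xP})/({_xP})(1-u)$ — equivalently $f$ vanishes on the complement and is determined by its restriction to $u({_xP})$; more precisely there is a natural identification of $H(P_S,N)_x = \varinjlim_{u\in U}\Hom_{\rSHY}(u({_xP}),N)$. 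Here one uses that ${_xP}=\varinjlim_u u({_xP})$ as in \Cref{GCat}, together with the fact that $u({_xP})=u^x({_xP})$ is generated by the degree-$x$ element $u^x$, so a graded homomorphism out of it extending to all of ${_xP}$ corresponds to a compatible family. Since a direct sum (over $x\in X$) of exact sequences of abelian groups is exact, and a direct limit (over $u\in U$, a directed set) of exact sequences is exact, the functor $H(P_S,-)$ is exact if and only if for every $x\in X$ and every $u\in U$ the functor $\Hom_{\rSHY}(u({_xP}),-)$ is exact — that is, $u({_xP})$ is projective in $\rSHY$.

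For the forward implication one direction is immediate: if $H(P_S,-)$ is exact, restricting a surjection $N\twoheadrightarrow N''$ one reads off from the $x$-homogeneous component (and then the $u$-truncation) that $\Hom_{\rSHY}(u({_xP}),N)\to\Hom_{\rSHY}(u({_xP}),N'')$ is surjective, so $u({_xP})$ is projective. Conversely, if every $u({_xP})$ is projective, then each $\Hom_{\rSHY}(u({_xP}),-)$ is exact, hence so is the direct limit over $u$ and the direct sum over $x$, giving exactness of $H(P_S,-)$. The main obstacle I anticipate is the bookkeeping in the first paragraph of the argument: carefully verifying that the grading piece $H(P_S,N)_x$ is indeed the direct limit $\varinjlim_u \Hom_{\rSHY}(u({_xP}),N)$ naturally in $N$, i.e. that ``$f=fu$ for some $u$'' is the same as ``$f$ is pulled back from some $u({_xP})$'', and that these identifications commute with the connecting maps of a short exact sequence. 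Once that compatibility is in place, the exactness of filtered colimits and arbitrary direct sums of abelian groups finishes the proof with no further calculation.
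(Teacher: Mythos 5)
Your argument is correct and, at its core, relies on the same mechanism as the paper's proof: a graded map $\gamma$ on $u({_xP})$ extends (by $p\mapsto\gamma(u({_xp}))$) to a homogeneous element of $H(P_S,-)_x$, a preimage under $H(P_S,\alpha)$ can be taken homogeneous of degree $x$, and restricting that preimage to $u({_xP})$ gives the desired lift because $up=p$ there. What you do differently is package this through the natural identification $H(P_S,N)_x\cong\varinjlim_{u\in U}\Hom_{\rSHY}(u({_xP}),N)$ (the directedness of $U$ and the compatibility you flag as bookkeeping do go through, exactly because $f=fu$ means $f$ kills the kernel of left multiplication by $u^x$ and is determined by its restriction to $u({_xP})$), after which the \emph{sufficiency} direction becomes purely formal: exactness of filtered colimits and direct sums of abelian groups, rather than the paper's explicit element-chase followed by the appeal to \Cref{Hrightadj} for left exactness. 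Two small cautions. First, your reduction sentence ``$H(P_S,-)$ is exact if and only if every $\Hom_{\rSHY}(u({_xP}),-)$ is exact'' is only justified by colimit-exactness in one direction; surjectivity of a filtered colimit of maps does not give surjectivity at each level, so the \emph{necessity} direction genuinely needs the restriction argument you sketch in the next paragraph (which is precisely the paper's argument, and it does close the gap: if $\alpha\circ f=\bar\gamma$ with $f\in H(P_S,N)_x$, then $f|_{u({_xP})}$ lifts $\gamma$). Second, the phrase ``factors through $({_xP})/({_xP})(1-u)$'' is an abuse of notation, since $R$ has no unit and $u$ acts on the left; what is meant, and what is true, is that left multiplication by $u^x$ is an idempotent endomorphism of ${_xP}$ in $\rSHY$, so ${_xP}=u({_xP})\oplus\ker(u^x\cdot{-})$ and $f=fu$ exactly when $f$ vanishes on that kernel.
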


\begin{proof}
Suppose first that $H(P_S,-)$ is exact. Let $\alpha:N\to N'$ be an epimorphism in $\rSHY$ and let $x\in X$, $u\in U$ and $\gamma\in \Hom_{\rSHY}(u({_xP}),N')$.
Then the map $\bar\gamma:P\to N'$, given by $\bar\gamma(p)=\gamma(u(_xp))$, is an element of $H(P_S,N')_x$.
Moreover, $H(P_S,\alpha):H(P_S,N)\to H(P_S,N')$ is surjective, by assumption.
Therefore there is $\gamma'\in H(P_S,N)_x$ such that $\alpha\gamma'=\bar\gamma$. Then the restriction $\gamma'_0$ of $\gamma'$ to $u({_xP})$ belongs to $\Hom_{\rSHY}(u({_xP}),N)$ and satisfies $\alpha \gamma'_0=\gamma$.
This proves that $u({_xP})$ is projective in $\lSHY$.

Conversely, suppose that $u({_xP})$ is projective in $\rSHY$ for every $x\in X$ and $u\in U$.
Let $\alpha:N\to N'$ as above and let $\gamma\in H(P,N')_x$. Then there is $u\in U$ such that $\gamma=\gamma u$. Let $\gamma_0$ be the restriction of $\gamma$ to $u({_xP})$. Clearly $\gamma_0\in \Hom_{\rSHY}(u({_xP}),N')$. By assumption, there is $\gamma'\in \Hom_{\rSHY}(u({_xP}),N)$ with $\alpha\gamma'=\gamma_0$. Then $\bar \gamma(p)=\gamma'(u(_xp))$ defines an element of $H(P_S,N)_x$ such that $\alpha \bar \gamma = \gamma$. This shows that $H(P_S,-)$ is right exact and hence it is exact by \Cref{Hrightadj}.
\end{proof}


\section{\underline{Equivalences of categories of graded modules}}\label{SectionEquivalences}

The aim of this section is to obtain a Morita-like theorem for the categories of graded modules in the setting of \Cref{convention}.
We start introducing a graded version of the natural ring homomorphisms $R\to \End(P_S)$ and $S\to \End(_RP)$ associated to an $(R,S)$-bimodule $P$.

Let $P$ be an $(X,Y)$-bigraded $(R,S)$-bimodule.
Then $H(P_S,P)$ is a subring of $\End_S(P)$ and an $R$-bisubmodule of $\End_S(P)$. Moreover, it is unital and $X$-graded as right $R$-module but, in general, it is neither unital nor $X$-graded as left $R$-module.
A natural bisubmodule which is $X$-bigraded and unital on both sides can be obtained by taking
\begin{eqnarray*}
E(P_S)&=&\{f\in R\cdot H(P_S,P) : f(P)\subseteq {_FP} \text{ for some finite subset } F \text{ of } X\} \\
&=& R\left(\bigoplus_{x,x'\in X}\Hom_{\rSHY}({_{x'}P},{_xP})\right)R.
\end{eqnarray*}
For the second equality we are identifying $\Hom_{\rSHY}({_{x'}P},{_xP})$ with the set of endomorphisms of $P$ with kernel containing $\oplus_{z\in X\setminus \{x'\}} ({_zP})$ and image contained in $_xP$.
The $X$-bigrading is given by
\begin{eqnarray*}
_xE(P_S)_{x'} &=&
\{f\in R\cdot H(P_S,P)_x : f(P)\subseteq P_{x'}\} \\
&=&\{f\in \Hom_{\rSHY}({_{x'}P},{_xP}) : \text{there is } u\in U \text{ such that } f=ufu\}.
\end{eqnarray*}

Analogously we define
\begin{eqnarray*}
E({_RP})&=&\{f\in H({_RP},P)\cdot S : f(P)\subseteq P_F \text{ for some finite subset } F \text{ of } Y\} \\
&=& S\left(\bigoplus_{y,y'\in X}\Hom_{\rRGX}(P_y,P_{y'})\right)S,
\end{eqnarray*}
with $Y$-bigrading
\begin{eqnarray*}
_yE(P_S)_{y'} &=&
\{f\in {_yH({_RP},P)}\cdot S : f(P)\subseteq P_{y'}\} \\
&=&\{f\in \Hom_{\rRGX}(P_y,P_{y'}) : \text{there is } v\in V \text{ such that } f=vfv\}.
\end{eqnarray*}

Consider the maps
$$\lambda_P:\widehat{R}\to E(P_S) \qand \varrho_P:\widehat{S}\to E({_RP})$$
given by
$$\lambda_P(r)(p)=r(_{x'}p) \qand \varrho_P(s)(p)=(p_y)s.$$
for $r\in {_x\widehat{R}_{x'}}$, $s\in {_y\widehat{S}_{y'}}$ and $p\in P$.

The following lemma together with the last comment in \Cref{RGorroEquiv} shows that, in the graded setting, $\widehat{R}$ plays the role that the regular bimodule $R$ plays in the ungraded case.

\begin{lemma}\label{lambdalemma}
$\lambda_{\widehat{R}}$ and $\varrho_{\widehat{R}}$ are isomorphisms of $X$-bigraded $R$-bimodules.
\end{lemma}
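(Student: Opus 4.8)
The plan is to prove the assertion for $\lambda_{\widehat R}$ in detail and to obtain the one for $\varrho_{\widehat R}$ from the obvious left--right symmetry. Recall that $_x\widehat R=(x)R$, that $_x\widehat R_{x'}=(x)R_{x'}$, and that, by definition,
\[
{_xE(\widehat R_R)_{x'}}=\bigl\{f\in\Hom_{\rRGX}((x')R,(x)R):f=ufu\text{ for some }u\in U\bigr\}.
\]
On the component ${_{x'}\widehat R}=(x')R$ the endomorphism $\lambda_{\widehat R}(r)$, for $r\in{_x\widehat R_{x'}}$, is left multiplication by $r$, and it is zero on the other components; since $r$ is homogeneous of left degree $x$ and right degree $x'$, this defines an element of $\Hom_{\rRGX}((x')R,(x)R)$, and choosing $u\in U$ with $ur=ru=r$ one checks (using $uu^{x'}=u^{x'}$ and that left and right multiplication by $u$ preserve both gradings of $\widehat R$) that $\lambda_{\widehat R}(r)=u\lambda_{\widehat R}(r)u$, so $\lambda_{\widehat R}(r)\in{_xE(\widehat R_R)_{x'}}$. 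Hence $\lambda_{\widehat R}$ respects the $X$-bigradings, and it remains to verify that it is $R$-bilinear and restricts to a bijection $_x\widehat R_{x'}\to{_xE(\widehat R_R)_{x'}}$ for each pair $(x,x')$. The bilinearity is a direct computation once one records how a homogeneous $s\in R_h$ shifts the two gradings of $\widehat R$ and uses the identity $h^{-1}\cdot x=x\cdot h$.

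For injectivity, I would evaluate $\lambda_{\widehat R}(r)$ at $u^{x'}\in(x')R$, getting $ru^{x'}=0$; since $r$ has right degree $x'$ we have $ru=ru^{x'}$ for every $u\in U$, and choosing $u$ with $ru=r$ gives $r=0$.

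For surjectivity, fix $f\in{_xE(\widehat R_R)_{x'}}$ and $u\in U$ with $f=ufu$, and set $r:=f(u^{x'})$; since $f$ is graded, $r\in(x)R_{x'}={_x\widehat R_{x'}}$. Given an arbitrary $q\in(x')R$, I would use \Cref{ArgumentoUnidades} to produce $w\in U$ with $qw=q$ and $u\le w$; then \Cref{HomShift} gives $f(q)=f(w^{x'})\,q$, while
\[
f(w^{x'})=(ufu)(w^{x'})=uf(uw^{x'})=uf(u^{x'})=f(u^{x'})=r,
\]
using $uw^{x'}=u^{x'}=uu^{x'}$ and $f=ufu$. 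Therefore $f(q)=rq=\lambda_{\widehat R}(r)(q)$, and since both $f$ and $\lambda_{\widehat R}(r)$ vanish off ${_{x'}\widehat R}$ we conclude $f=\lambda_{\widehat R}(r)$. Assembling the graded components over all $(x,x')$ then shows that $\lambda_{\widehat R}$ is bijective, and the same argument with the roles of the left and right gradings interchanged handles $\varrho_{\widehat R}$.

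The main obstacle is not any isolated step but the degree bookkeeping: keeping straight how homogeneous elements of $R$ act on the left and right $X$-gradings of $\widehat R$, the passage between the left and right actions of $G$ on $X$, and the stabilization fact that for $u\le w$ and $f=ufu$ the element $f(w^{x'})$ is independent of $w$. Once this is organized, the proof reduces to the combination of \Cref{HomShift} with this stabilization, exactly as above.
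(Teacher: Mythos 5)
Your proposal is correct and follows essentially the same route as the paper: bigradedness and bilinearity by direct computation, injectivity by evaluating at $u^{x'}$ (the paper evaluates a general element at $\widehat u=\sum_{x\in F}u^x$, but since the map is bigraded your componentwise check suffices), and surjectivity by showing $f=\lambda_{\widehat R}(f(u^{x'}))$ via $f=ufu$. The only slip is that for the appeal to \Cref{HomShift} you need $wq=q$ (i.e.\ $q\in wR$) rather than $qw=q$; since $R$ has local units one can choose $w$ satisfying both, so this is harmless, and in fact the paper's shorter argument $f(a)=f(ua)=f(u^{x'}a)=f(u^{x'})a$ avoids introducing $w$ altogether.
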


\begin{proof}
We prove the statement for $\lambda_{\widehat{R}}$;
the proof for $\varrho_{\widehat{R}}$ is analogous. 
Proving that $\lambda_{\widehat{R}}$ is a homomorphism of $R$-bimodules is straightforward.
To prove injectivity, take $0 \ne r=\sum_{x\in F} r_x$ with $F$ a finite set of $X$ and $r_x\in \widehat{R}_x$ for every $x\in F$.
By \Cref{ArgumentoUnidades}, there exists $u \in U$ such that $r_xu = r_x$ for every $x\in F$. Moreover, $r_xu=r_xu^x$ and $u^x \in (x)R={_x\widehat{R}}$. Let $\widehat{u}=\sum_{x\in F} u^x \in \oplus_{x\in F}\;{_x\widehat{R}}$. Then $\lambda_{\widehat{R}}(r)(\widehat{u}) = \sum_{x\in F} r_xu^x = r\ne 0$. Thus $\lambda_{\widehat{R}}$ is injective.
For surjectivity, take $f \in {_xE(\widehat{R}_R)}_{x'}$.
Then there is $u \in U$ such that $fu = f$.
For every $a \in (x')R$,
$$f(a) = f(ua) = f(u^{x'}a) =f(u^{x'})a = \lambda_{\widehat{R}}(f(u^{x'}))(a),$$
i.e. $f=\lambda_{\widehat{R}}(f(u^{x'}))$.
This proves that $\lambda_{\widehat{R}}$ is surjective.
\end{proof}

\begin{lemma}\label{fingen}
Let $P$ be a $(X,Y)$-bigraded $(R,S)$-bimodule such that $u({_xP})$ is finitely generated as a right $S$-module for all $x \in X$ and $u \in U$. Then 
\begin{enumerate}
    \item\label{fingenCommutes} $H(P_{S}, -)$ commutes with direct sums and
    \item\label{fingenE} $E(P_S) = H(P_S,P)$.
\end{enumerate}
\end{lemma}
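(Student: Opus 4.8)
The plan is to exploit the unitality condition built into the grading of $H(P_S,-)$. An element $f\in H(P_S,N)_x$ satisfies $f=fu$ for some $u\in U$, hence is completely determined by its restriction to $u({_xP})$; and by hypothesis $u({_xP})$ is a finitely generated right $S$-module. First I would record the elementary reduction: one can choose \emph{homogeneous} $p_1,\dots,p_n$ with $u({_xP})=\sum_{k=1}^n p_kS$ (the $Y$-homogeneous components of any finite generating set again generate), and then for every $p\in {_xP}$ one has $f(p)=f(up)\in\sum_k f(p_k)S$, so $f({_xP})=\sum_{k=1}^n f(p_k)S$. Thus any ``smallness'' of the target that is visible on the finitely many elements $f(p_1),\dots,f(p_n)$ is inherited by $f$; this single observation drives both parts, and choosing the generators homogeneous is what keeps everything inside $\rSHY$.

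For \eqref{fingenCommutes} I would take a family $(N_i)_{i\in I}$ in $\rSHY$ and consider the canonical graded map $\theta\colon\bigoplus_iH(P_S,N_i)\to H(P_S,\bigoplus_iN_i)$; injectivity is the usual argument with the projections $\pi_j$. Since $\theta$ is a morphism of $X$-graded objects, surjectivity may be checked in a fixed degree $x$: given $g\in H(P_S,\bigoplus_iN_i)_x$ with $g=gu$ and homogeneous generators $p_1,\dots,p_n$ of $u({_xP})$, the finitely many elements $g(p_k)\in\bigoplus_iN_i$ have finite support, so $g({_xP})=\sum_kg(p_k)S$ is contained in $\bigoplus_{i\in F}N_i$ for some finite $F\subseteq I$. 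Hence $g$ factors through $\bigoplus_{i\in F}N_i\hookrightarrow\bigoplus_iN_i$, the factored map $g'\colon {_xP}\to\bigoplus_{i\in F}N_i$ still satisfies $g'=g'u$, and decomposing $g'$ into its $N_i$-components ($i\in F$), which lie in $H(P_S,N_i)_x$, exhibits $g$ as $\theta$ applied to an element of $\bigoplus_{i\in F}H(P_S,N_i)$.

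For \eqref{fingenE}, the inclusion $E(P_S)\subseteq H(P_S,P)$ needs no hypothesis: using the bigraded description $E(P_S)=\bigoplus_{x,x'}{_xE(P_S)_{x'}}$, every element $f$ of a summand ${_xE(P_S)_{x'}}$ satisfies $f=ufu$ for some $u\in U$, whence $fu=f$ by a one-line check, so $f\in H(P_S,P)_{x'}$. For the converse I would take $f\in H(P_S,P)_x$ with $f=fu$ and homogeneous generators $p_1,\dots,p_n$ of $u({_xP})$. Since each $f(p_k)\in P$ is a finite sum of $X\times Y$-homogeneous components and the right $S$-action on $P$ preserves $X$-degrees, $f({_xP})=\sum_kf(p_k)S$ lies in ${_FP}$ for the finite set $F\subseteq X$ of $X$-degrees occurring among the $f(p_k)$. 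I would then split $f=\sum_{x'\in F}\pi_{x'}\circ f$ along the projections $\pi_{x'}\colon P\to {_{x'}P}$; each $f_{x'}:=\pi_{x'}\circ f$ lies in $\Hom_{\rSHY}({_xP},{_{x'}P})$, still satisfies $f_{x'}=f_{x'}u$, and has image generated over $S$ by the finitely many elements $\pi_{x'}(f(p_k))$. Applying the evident left-handed version of \Cref{ArgumentoUnidades} to this finite family in the unital left $R$-module $P$ (together with $u$, using that $U$ is directed) gives $v\in U$ with $u\le v$ and $v\cdot\pi_{x'}(f(p_k))=\pi_{x'}(f(p_k))$ for all $k$; then $f_{x'}=f_{x'}v$ (because $u\le v$) and $vf_{x'}=f_{x'}$ (because $v$ fixes a set of $S$-generators of the image of $f_{x'}$), i.e. $f_{x'}=vf_{x'}v$. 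Hence $f_{x'}\in{_{x'}E(P_S)_x}$, and summing over $x'\in F$ yields $f\in E(P_S)$.

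The step I expect to need the most care is this last one in \eqref{fingenE}: exhibiting a single local unit acting as the identity on $f_{x'}$ from both sides, so that $f_{x'}$ genuinely lands in the bigraded piece ${_{x'}E(P_S)_x}$ and not merely in the a priori larger $R\cdot H(P_S,P)$. This is exactly where the left-unitality of $P$ and the directedness of $U$ enter. Everything else is the standard fact that $\Hom$ out of a finitely generated module commutes with direct sums, transported through the unitality condition defining $H(P_S,-)$.
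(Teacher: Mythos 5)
Your proof is correct and follows essentially the same route as the paper's: both parts rest on writing $f=fu$ and using a finite generating set of $u({_xP})$, so that the image of $f$ is controlled by finitely many elements, which forces the image into finitely many direct summands and allows a left local unit to fix it. The only cosmetic differences are that in part (2) you deduce $f({_xP})\subseteq {_FP}$ directly from the $X$-degrees of the $f(p_k)$ instead of citing part (1), and you certify membership in $E(P_S)$ via the bigraded components (the sandwich condition $f_{x'}=vf_{x'}v$) rather than via the description $R\cdot H(P_S,P)$ -- both come down to producing, as you do, a local unit acting as the identity on the left of $f(p_1),\dots,f(p_n)$.
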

\begin{proof}
(1)     Let $(N_{i})_{i \in I}$ be a family of $Y$-graded right $S$-modules.
Let $f \in H(P_S, \oplus_{i \in I} N_i)$ and for every $x\in X$, let $f_x$ denote the homogeneous component of $f$ of degree $x$.
By the definition of $H(P_S,-)$, there exists a finite subset $F$ of $X$ and $u\in U$ such that $f_x=0$ for all $x\in X\setminus F$ and $f=fu$.
The hypothesis implies that $u(_FP)$ is finitely generated and hence $f(u(_FP))\subset \oplus_{j\in J} N_j$ for a finite subset $J$ of $I$. 
Thus the restriction of $f$ to $u(_FP)$ is of the form $\sum_{j\in J} f'_j$, with $f'_j\in \Hom_{\rSHY}(u(_FP),N_j)$ for every $j\in J$.
Define $\overline{f_j}:P\to N_j$, by setting $\overline{f_j}(p)=f'_j(u\sum_{x\in F} {_xp})$ for $p\in P$.
Since $f=fu$ and $f$ vanishes in $_{X\setminus F}P$, it follows that  $f=\sum_{j\in J} \overline{f_j} \in \oplus_{j\in J} H(P_S,N_j) \subseteq \oplus_{i\in I} H(P_S,N_i)$.
This proves that $H(P_S, \oplus_{i \in I} N_i)\subseteq \oplus_{i\in I} H(P_S,N_i)$. As the other inclusion is clear, we conclude that $H(P_S,-)$ commutes with direct sums.

(2) It suffices to show that $H(P_S,P)_{x}\subseteq {E(P_S)}_{x}$, for any $x \in X$.
Let $f \in H(P_S,P)_{x}$.
By (1), $f(P)\subseteq {_FP}$ for some finite subset $F$ of $X$. 
So it suffices to show that $f\in R\cdot H(P_S,P)$. 
Indeed, let $u\in U$ with $f=fu$. 
By assumption, $u({_xP})$ has finite generating set, say $A$.
Then there is $u' \in U$ such that $u'f(a) = f(a)$ for each $a\in A$, and thus, $f=u'f \in R\cdot H(P_S,P)$, as desired.
\end{proof}

Combining \Cref{GCat} and \Cref{HRRMatrix} we get
\[E(\widehat{R}_R)= H(\widehat{R}_R,\widehat{R}) \qand E(_R\widehat{R})= H(_R\widehat{R},\widehat{R}).\]
Actually these two rings are isomorphic to the following subring of the ring $\FM_X(R)$: 
	\[S_X(R) = \{(a_{x,x'})_{x,x'\in X}\in \FM_X(R) : a_{x,x'} \in (x)R_{x'}, \text{ for every } x,x'\in X\},\]
where $\FM_X(R)$ denotes the ring of $X\times X$-indexed matrices over $R$ having finitely many non-zero entries.

\begin{lemma}\label{HRRMatrix}
	The ring $S_X(R)$ is isomorphic to both $E(\widehat{R}_R)$ and $E(_R\widehat{R})$.
\end{lemma}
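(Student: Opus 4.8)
The plan is to obtain the lemma essentially for free from \Cref{lambdalemma}, which already supplies the underlying additive isomorphisms; the only thing left to do will be to match the two multiplications. First I would observe that, as additive groups,
$$\widehat{R}=\bigoplus_{x\in X}(x)R=\bigoplus_{x,x'\in X}(x)R_{x'}=\bigoplus_{x,x'\in X}{}_x\widehat{R}_{x'},$$
so an element of $\widehat{R}$ is precisely a finitely supported family $(a_{x,x'})_{x,x'\in X}$ with $a_{x,x'}\in(x)R_{x'}$; thus $\widehat{R}$ and $S_X(R)$ have the same underlying abelian group. I will use the identification sending $a\in(x)R_{x'}$ to the matrix of $S_X(R)$ with $a$ in position $(x,x')$ and zeros elsewhere. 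Under it, $S_X(R)$ is simply $\widehat{R}$ endowed with the biadditive product which on homogeneous blocks is $a\cdot b=ab$ (the product inside $R$) if $a\in(x)R_{x'}$, $b\in(x')R_{x''}$, and $a\cdot b=0$ if $a\in(x)R_{x'}$, $b\in(z)R_{z'}$ with $x'\ne z$; note that $ab\in(x)R_{x''}$, so the product stays in $\widehat{R}$.

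Next I would invoke \Cref{lambdalemma}: $\lambda_{\widehat{R}}\colon\widehat{R}\to E(\widehat{R}_R)$ and $\varrho_{\widehat{R}}\colon\widehat{R}\to E({}_R\widehat{R})$ are isomorphisms of $X$-bigraded $R$-bimodules, hence in particular additive bijections. So it will suffice to check that, when $\widehat{R}$ carries the matrix product described above, $\lambda_{\widehat{R}}$ and $\varrho_{\widehat{R}}$ are multiplicative; and since both products and both maps respect the $X\times X$-grading and are biadditive, it is enough to verify this on homogeneous elements. Unwinding $\lambda_{\widehat{R}}(a)(p)=a\,({}_{x'}p)$ for $a\in{}_x\widehat{R}_{x'}=(x)R_{x'}$, one finds that $\lambda_{\widehat{R}}(a)$ is the endomorphism of $\widehat{R}$ which vanishes on ${}_z\widehat{R}$ for $z\ne x'$ and sends $p\in{}_{x'}\widehat{R}=(x')R$ to $ap\in{}_x\widehat{R}=(x)R$ — and this is a graded right $R$-module map $(x')R\to(x)R$, for the same reasons that justified \eqref{leftmultmap}. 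Consequently, for $a\in{}_x\widehat{R}_{x'}$ and $b\in{}_z\widehat{R}_{z'}$, the composite $\lambda_{\widehat{R}}(a)\circ\lambda_{\widehat{R}}(b)$ is zero unless $z=x'$, and when $z=x'$ it is $p\mapsto a(bp)=(ab)p$, that is, $\lambda_{\widehat{R}}(ab)$ with $ab\in{}_x\widehat{R}_{z'}$. This is exactly the product rule in $S_X(R)$ for the matrix supported at $(x,x')$ with entry $a$ and the matrix supported at $(z,z')$ with entry $b$, so $\lambda_{\widehat{R}}$ is a ring isomorphism $S_X(R)\to E(\widehat{R}_R)$.

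The argument for $\varrho_{\widehat{R}}$ is the mirror image, with right multiplications in place of left ones: for $b\in{}_y\widehat{R}_{y'}=(y)R_{y'}$ the map $\varrho_{\widehat{R}}(b)(p)=(p_y)b$ is the endomorphism of the left $R$-module $\widehat{R}$ which kills $\widehat{R}_z$ for $z\ne y$ and sends $q\in\widehat{R}_y=R(y)$ to $qb\in R(y')=\widehat{R}_{y'}$, and composing two such operators — in the order under which $E({}_R\widehat{R})$ acts on $\widehat{R}$ on the right — again reproduces ordinary matrix multiplication, giving a ring isomorphism $S_X(R)\to E({}_R\widehat{R})$. Combining the two isomorphisms yields the lemma. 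The point I expect to require the most care is the bookkeeping: reading off correctly, from the definitions of the shifts $(x)R$ and $R(x)$ and of the bigrading on $\widehat{R}$, that $\lambda_{\widehat{R}}(a)$ and $\varrho_{\widehat{R}}(b)$ really are the claimed one-sided multiplication operators with the correct supports, and fixing the convention for composition in $E({}_R\widehat{R})$ so that ordinary matrix multiplication — rather than its transpose or its opposite — is what appears.
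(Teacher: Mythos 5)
Your proposal is correct, and the isomorphism you use is in substance the same one the paper uses: your $\lambda_{\widehat{R}}$, read through the identification of $\widehat{R}=\oplus_{x,x'}(x)R_{x'}$ with $S_X(R)$, is exactly the paper's inverse map $\Psi$ (matrix $\mapsto$ left-multiplication operator), and the paper's $\Phi$ (evaluate $\alpha_x$ at $u^x$ and read off components) is its inverse. The organizational difference is that the paper builds $\Phi$ and $\Psi$ from scratch inside the proof of \Cref{HRRMatrix} — re-proving well-definedness and bijectivity by the same evaluation-at-local-units computation that already appears in \Cref{lambdalemma} — and verifies multiplicativity of $\Phi$, whereas you outsource bijectivity entirely to \Cref{lambdalemma} and only check that $\lambda_{\widehat{R}}$ (and $\varrho_{\widehat{R}}$) respect the matrix product on homogeneous blocks; this is a legitimate and slightly leaner route, at the price of having to verify that the block product $(x)R_{x'}\cdot(x')R_{x''}\subseteq(x)R_{x''}$ matches matrix multiplication under the identification, which you do. You also treat the second half of the statement more carefully than the paper, which dismisses $E({}_R\widehat{R})$ with ``the proof is similar'': your caveat about the composition convention is a real point, since with endomorphisms of the left module written on the left $\varrho_{\widehat{R}}$ is an anti-homomorphism and one would land in the opposite (transposed) matrix ring, so the statement as written requires the right-operator convention for $E({}_R\widehat{R})$ (the one making $\widehat{R}$ an $(R,E({}_R\widehat{R}))$-bimodule), exactly as you fix it.
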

\begin{proof}
We prove that $S_X(R)$ is isomorphic to $E(\widehat{R}_R)$; the proof that it is isomorphic to $E(_R\widehat{R})$ is similar. 
	
Let $\alpha \in E(\widehat{R}_R)$. Then $\alpha = \sum_{x \in X} \alpha_x$, with $\alpha_x\in \Hom_{\rRGX}((x)R,\widehat{R})$ and $\alpha_x=0$ for almost all $x\in X$  and there is $u\in U$ such that $\alpha_xu=\alpha_x$ for each $x \in X$. 
Recall that $u^x\in (x)R_x$ and $\alpha_xu^x=\alpha_x$ for every $x\in X$. Then $ \alpha_x(u^x) = \sum_{x' \in X} {_{x'}(\alpha_x(u_x))}$, with ${_{x'}(\alpha_x(u_x))} \in (x')R_x$ for every $x,x'\in X$. Define the map
	\[\Phi:  E(\widehat{R}_R) \to  S_X(R), \quad \alpha  \mapsto  ({_{x'}\alpha_x(u_x)})_{x',x \in X}.\]
It is easy to see that this map does not depend on the choice of $u$. 
To prove that $\Phi$ is a ring homomorphism, let $\alpha, \beta \in E(\widehat{R}_R)$ and $u,v\in U$ such that $\alpha_x u = \alpha_x$ and $\beta_x v = \beta_x$ for every $x\in X$. Then,
 $$\Phi(\alpha \beta) = ({_{x'}(\alpha \beta)_x(v_x)})_{x',x} = \left(\sum_{z \in X} {_{x'}\alpha_z(u^z)}{_z\beta_x(v^x)}\right)_{x',x} = \left(({_{x'}\alpha_x(u^x)})_{x',x}\right)\left(({_{x'}\beta_x(v^x)})_{x',x}\right) = \Phi(\alpha)\Phi(\beta).$$
	The inverse of $\Phi$ is given by $\Psi$
	\[\Psi:  S_X(R) \to  E(\widehat{R}_R), \quad  (a_{x',x})_{x',x \in X}  \mapsto  \alpha.\]
	such that, $\alpha(r) = \sum_{x' \in X} a_{x',x}r$, for $r\in (x)R$.
\end{proof}

\begin{remark}\label{SLocalUnits}
For every idempotent $u$ of $R$ let $u^X = \diag(u^x:x\in X)$, i.e. the diagonal matrix having $u^x$ at the $(x,x)$-th entry. 
Then $u^X$ is an idempotent of $S_X(R)$, as each $u^x$ is an idempotent in $(x)R_x$, and $u^x=0$ for almost all $x\in X$. 
Moreover, if $a=(a_{x,x'})\in R$, then it has only finitely many non-zero entries and therefore there is $u\in U$ such that $a_{x,x'}=ua_{x,x'}u$ for every $x,x'\in X$. Then $a=u^Xau^X$. Indeed, the $(x,x')$-entry of $u^Xa$ is 
	\[\sum_{z\in X} u_{x,z} a_{z,x'} = u^x a_{x,x'} = ua_{x,x'}=a_{x,x'}\]
and similarly the $(x,x')$ of $au^X$ is $a_{x,x'}u^{x'}=a_{x,x'}$. 
This shows that $\{u^X : u\in U\}$ is a set of local units of $S_X(R)$. Hence both $E(\widehat{R}_R)$ and $E(_R\widehat{R})$ have local units.
\end{remark}

The following theorem characterizes the equivalences of categories of graded modules in the context of \Cref{convention}.

    \begin{theorem}\label{Equivalencia}
The following conditions are equivalent for a functor $F:\rRGX\to\rSHY$.
\begin{enumerate}
    \item\label{EquiF} $F$ is an equivalence of categories.
    \item\label{EquiPQ} There are an $(X,Y)$-bigraded $(R,S)$-bimodule $P$ and a $(Y,X)$-bigraded $(S,R)$-bimodule $Q$ such that $\widehat{R} \simeq P \hotimes _{S}Q$ and $\widehat{S} \simeq Q \hotimes _{R} P$, as bigraded bimodules, and $F$ is naturally isomorphic to $-\hotimes_R P$.
    \item\label{EquiQ} There is a $(Y,X)$-bigraded $(S,R)$-bimodule $Q$ such that $F$ is naturally isomorphic to $H(Q_R,-)$, $\lambda_Q:\widehat{S}\to E(Q_R)$ is an isomorphism of $Y$-bigraded $S$-bimodules, $Q_R$ is a generator of $\rRGX$ and $v(_yQ)$ is finitely generated and projective for every $y\in Y$ and $v\in V$.
    \item\label{EquiP} There is an $(X,Y)$-bigraded $(R,S)$-bimodule $P$ such that $F$ is naturally isomorphic to $-\hotimes_RP$, $\lambda_P:\widehat{R}\to E(P_S)$ is an isomorphism of $X$-bigraded $R$-bimodules, $P_S$ is a generator of $\rSHY$ and $u({_xP})$ is finitely generated and projective for every $x\in X$ and $u\in U$.
\end{enumerate}
In that case
\begin{enumerate}[(a)]
    \item\label{EquiQI} $H(_SQ,-):\lSHY\to\lRGX$ is an equivalence of categories $\varrho_Q:\widehat{R}\to E(_SQ)$ is an isomorphism of $X$-bigraded $R$-bimodules, $_SQ$ is a generator of $\lSHY$ and $Q_xu$ is finitely generated and projective for every $x\in X$ and $u\in U$.
    \item\label{EquiPI} $P\hotimes_S-:\lSHY\to\lRGX$ is an equivalence of categories $\varrho_P:\widehat{S}\to E({_RP})$ is an isomorphism of $Y$-bigraded $S$-bimodules, $_RP$ is a generator of $\lRGX$ and $(P_y)u$ is finitely generated and projective for every $y\in Y$ and $v\in V$.
\item\label{EquiIzq} $H(_RP,-)$ is naturally isomorphic to $Q\hotimes_R-$.
\end{enumerate}
\end{theorem}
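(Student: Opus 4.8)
The plan is to prove the chain of equivalences in \Cref{Equivalencia} by establishing a cycle of implications among \eqref{EquiF}, \eqref{EquiPQ}, \eqref{EquiP}, \eqref{EquiQ}, and then deriving the ``in that case'' statements as byproducts. First I would prove \eqref{EquiF} $\Rightarrow$ \eqref{EquiPQ}: if $F$ is an equivalence with quasi-inverse $F'$, then both $F$ and $F'$ are left adjoints (each is both a left and right adjoint of the other), so by \Cref{proptensadj} and \Cref{LeftAdjointP} we have $F\cong -\hotimes_R P$ with $P=\oplus_x F((x)R)$ and $F'\cong -\hotimes_S Q$ with $Q=\oplus_y F'((y)R)$. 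Then $-\hotimes_R(P\hotimes_S Q)\cong (-\hotimes_R P)\hotimes_S Q\cong F'F\cong \mathrm{id}$ by \Cref{OtimesAssociative}, and by the uniqueness of the representing bimodule (apply both functors to the generators $(x)R$ and use \Cref{TensorShift}) we get $P\hotimes_S Q\cong \widehat{R}$ as $X$-bigraded bimodules; symmetrically $Q\hotimes_R P\cong \widehat{S}$.

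Next, \eqref{EquiPQ} $\Rightarrow$ \eqref{EquiP}: given such $P$ and $Q$, the functor $-\hotimes_R P$ has quasi-inverse $-\hotimes_S Q$, so $F$ is an equivalence and in particular preserves projectives and generators; applying $-\hotimes_S Q$ to $(x)R$ and using $\widehat{R}\hotimes_R - \cong \mathrm{id}$ and \Cref{TensorShift} shows $_xP=(x)R\hotimes_R P$ corresponds to an object whose $u$-truncations $u(_xP)$ are finitely generated projective (being the images under the equivalence of the finitely generated projectives $u\cdot(x)R$ of \Cref{GCat}, modulo identifying the $S$-side truncation correctly). That $P_S$ generates $\rSHY$ follows because the equivalence $-\hotimes_S Q$ carries the generating set $\{(y)S\}$ to objects which are direct summands of sums of copies of $P$. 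For $\lambda_P\colon\widehat{R}\to E(P_S)$ being an isomorphism: since $-\hotimes_R P$ is fully faithful, $\Hom_{\rSHY}(_{x'}P,{_xP})\cong\Hom_{\rRGX}((x')R,(x)R)$ naturally via $F$, and under \Cref{lambdalemma} (the case $P=\widehat R$) this matches the graded components of $\lambda_P$ against those of $\lambda_{\widehat R}$, which is an isomorphism; assembling over $x,x'$ and accounting for local units gives the claim.

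Then \eqref{EquiP} $\Rightarrow$ \eqref{EquiQ} by passing to the right adjoint: \Cref{proptensadj} gives that $H(P_S,-)$ is right adjoint to $F=-\hotimes_R P$, and \Cref{HExact} shows $H(P_S,-)$ is exact since $u(_xP)$ is projective; \Cref{fingen} shows it commutes with direct sums and that $E(P_S)=H(P_S,P)$. One then checks the unit and counit of the adjunction are isomorphisms: the counit on $(x)R$ is built from $\lambda_P$ being an isomorphism together with $P_S$ generating, and the generator condition forces surjectivity while the finitely-generated-projective condition forces injectivity of the relevant evaluation maps. This makes $F$ an equivalence with quasi-inverse $H(P_S,-)$, and setting $Q$ to be the bimodule representing the left adjoint $H(P_S,-)$ (which exists by \Cref{proptensadj} since $H(P_S,-)$ is exact and preserves direct sums, hence is itself a left adjoint as well as a right adjoint) gives \eqref{EquiQ}; the properties of $Q$ transfer from those of $P$ by the symmetric argument, and \eqref{EquiF} is now immediate, closing the cycle. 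The ``in that case'' items \eqref{EquiQI}, \eqref{EquiPI}, \eqref{EquiIzq} follow by running the entire argument on the left-module side (everything above is left-right symmetric, using $\lRGX$, $P\hotimes_S-$, $H(_RP,-)$, $\varrho$ in place of $\rRGX$, $-\hotimes_R P$, $H(P_S,-)$, $\lambda$), together with \Cref{OtimesAssociative} to identify $H(_RP,-)$ with $Q\hotimes_R-$ via the natural iso $Q\hotimes_R M\cong Q\hotimes_R(\widehat R\hotimes_R M)$ and the adjunction uniqueness.

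The main obstacle I anticipate is \emph{the bookkeeping around local units}: the graded modules here are not finitely generated and the rings are not unital, so every ``finitely generated projective generator'' statement has to be phrased through the truncations $u(_xP)$ and the direct-limit description $(x)R=\varinjlim_u u\cdot(x)R$ from \Cref{GCat}. In particular, verifying that $\lambda_P$ is an isomorphism \emph{as $X$-bigraded bimodules} (not merely that the Hom-groups match abstractly) requires carefully matching the description of $_xE(P_S)_{x'}$ in terms of $u f u$ with the full-faithfulness isomorphism, and checking compatibility with the $R$-bimodule structure; and proving $H(P_S,-)$ is a quasi-inverse requires the hypotheses of \eqref{EquiP} to be used exactly where finiteness or projectivity or generation is genuinely needed (via \Cref{HExact} and \Cref{fingen}), rather than silently invoking the unital Morita theorem. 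The rest is a fairly mechanical transcription of the classical Morita argument into the $\hotimes$/$H$ formalism set up in Sections~\ref{SectionAdjoints} and this section.
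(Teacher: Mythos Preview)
Your proposal is correct and follows essentially the same strategy as the paper: both use \Cref{proptensadj} and \Cref{LeftAdjointP} to realize $F$ and its quasi-inverse as $\hotimes$-functors for \eqref{EquiF}$\Rightarrow$\eqref{EquiPQ}, transfer the generator and finitely-generated-projective properties of the $u\cdot(x)R$ from \Cref{GCat} across the equivalence, identify $\lambda_P$ (resp.\ $\lambda_Q$) with $\lambda_{\widehat R}$ (resp.\ $\lambda_{\widehat S}$) via full faithfulness and \Cref{lambdalemma}, and prove the hard direction by checking that the unit and counit of the $(-\hotimes P,\,H(P_S,-))$ adjunction are isomorphisms using \Cref{HExact} and \Cref{fingen}. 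The only cosmetic difference is that the paper organizes the cycle as \eqref{EquiPQ}$\Rightarrow$\eqref{EquiF}, \eqref{EquiF}$\Rightarrow$\eqref{EquiPQ}\,$\wedge$\,\eqref{EquiQ}\,$\wedge$\,\eqref{EquiP}, \eqref{EquiQ}$\Rightarrow$\eqref{EquiF}, and then reduces \eqref{EquiP}$\Rightarrow$\eqref{EquiF} to \eqref{EquiQ}$\Rightarrow$\eqref{EquiF} by swapping roles, whereas you thread \eqref{EquiF}$\Rightarrow$\eqref{EquiPQ}$\Rightarrow$\eqref{EquiP}$\Rightarrow$\eqref{EquiQ}$\Rightarrow$\eqref{EquiF}; the content and the lemmas invoked are the same.
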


\begin{proof}
\eqref{EquiPQ} implies \eqref{EquiF} follows from the associativity of $\hotimes$ (\Cref{OtimesAssociative}) and the fact that $- \hotimes_R \widehat{R}$ is isomorphic to the identity functor (\Cref{RGorroEquiv}).

\eqref{EquiF} implies \eqref{EquiPQ}, \eqref{EquiQ} and \eqref{EquiP}.
Suppose that $F: \rRGX \rightarrow \rSHY$ and
$T: \rSHY \rightarrow \rRGX$ define an equivalence. In particular, both are left and right adjoint and hence, by Proposition \ref{proptensadj}, there exist an $(X,Y)$-bigraded $(R,S)$-bimodule $P$ and a $(Y,X)$-bigraded $(S,R)$-bimodule $Q$ such that $F \cong - \hotimes _{R} P$ and $T \cong - \hotimes _S Q$.
Moreover, we may take $P=\oplus_{x\in X} F((x)R)$ and $Q=\oplus_{y\in Y}T((y)S)$, by \Cref{LeftAdjointP}.

Since $\Id_{\rRGX}$ and $T\circ F$ are naturally isomorphic, using \Cref{HomShift}, we get
$$R(x) \simeq R(x) \hotimes _R P \hotimes _{S} Q \simeq {_xP} \hotimes _{S} Q,$$
and therefore
$$\widehat{R} = \oplus_{x \in X} R(x) \simeq \oplus_{x \in X} (_xP \hotimes _{S} Q) = P \hotimes _{S} Q.$$
Thus $\widehat{R}\cong P \hotimes _{S} Q$ as $X$-graded $R$-bimodules, and analogously, $\widehat{S} \simeq Q \hotimes _R P$ as $Y$-graded $S$-bimodules.
This finishes the proof of \eqref{EquiPQ}.

By the uniqueness of adjoints, \Cref{proptensadj} implies that $F$ is naturally isomorphic to $H(Q_R,-)$ and $T$ is naturally isomorphic to $H(P_S,-)$.

By \Cref{GCat}, $\oplus_{y\in Y}(y)S$ is a generator of $\rSHY$ and $v(y)S$ is finitely generated and projective in $\rSHY$ for every $v\in V$. Moreover, $Q_R=T(\oplus_{y\in Y}(y)S)$ and $v(_yQ)\cong T(v(y)S)$. Hence $Q_R$ is a generator of $\rRGX$ and  $v(_yQ)$ is finitely generated projective in $\rRGX$. 
Similarly, $P_S$ is a generator of $\rSHY$ and $u({_xP})$ is finitely generated projective for every $x\in X$ and $u\in U$.

Let $y,y'\in Y$. Then the following maps are isomorphisms of additive groups
    $$\Hom_{\rSHY}((y')S,(y)S) \stackrel{-\hotimes_S Q}{\to}
    \Hom_{\rRGX}((y')S\hotimes_R Q,(y)S\hotimes_R Q) \stackrel{\phi}{\to}
    \Hom_{\rRGX}({_{y'}Q},_yQ).$$
The second isomorphism comes from the isomorphisms $\phi_{y,Q}:(y)S\to {_yQ}$ and $\phi_{y',Q}:(y')S\to {_{y'}Q}$ from \Cref{TensorShift}.
The composition restricts to an isomorphism 
    $${_{y'}E(\widehat{S}_S)_y}\to {_{y'}E(Q_S)_y}$$
which composed with $\lambda_{\widehat{S}}$ yields an isomorphism
$$\Phi:{_{y'}\widehat{S}_y}\to {_{y'}H(Q_S,Q_S)_y},$$
by \Cref{lambdalemma}.
Observe that if $s\in {_{y'}\widehat{S}_y}$ and $q\in {_{y'}Q}$, then there is $v\in V$ with $vq=q$. Then
\begin{eqnarray*}
\Phi(s)(q)&=&[\phi((-\hotimes_S Q)(\lambda_{\widehat{S}}(s))](q) =
\phi_{y,Q}((\lambda_{\widehat{S}}(s)\otimes_S Q)(\phi_{y',Q}^{-1}(q))) \\
&=& 
\phi_{y,Q}((\lambda_{\widehat{S}}(s)\otimes_S Q)(v^{y'}\otimes q)) =
\phi_{y,Q}(sv^{y'}\otimes q) = sv^{y'}q=svq=sq=\lambda_Q(s)(q).
\end{eqnarray*}
So $\Phi$ is the restriction of $\lambda_Q$ to $_{y'}Q_y$. 
This proves that $\lambda_Q$ is an isomorphism of $Y$-graded $S$-bimodules.
The same proof shows that $\lambda_P$ is an isomorphism.
This finishes the proof of \eqref{EquiQ} and \eqref{EquiP}.

\eqref{EquiQ} implies \eqref{EquiF}
Let $Q$ satisfy the conditions of \eqref{EquiQ} and let $T = - \hotimes _{S} Q : \rSHY \rightarrow \rRGX$.
For $N\in\rSHY$ define the $S$-linear map 
    $$\alpha_N:N\to (F\circ T)(N)=H(Q_R,N\hotimes_S Q)$$
which on $q\in Q$ and $n\in N_y$ with $y\in Y$ acts as follows:
    $$\alpha_N(n)(q)=n\otimes {_yq}.$$
It is easy to see that $N\mapsto \alpha_N$ defines a natural transformation from $\Id_{\rSHY}\to F\circ T$. 
For every $y\in Y$ let $\phi_{y,Q}:(y)S\hotimes_S Q \to {_yQ}$ be the isomorphism from \Cref{TensorShift}.
Then $\alpha_{(y)S}$ is the composition of the following maps:
\[\xymatrix{\ar[rr]^-{\lambda_Q|_{(y)S}} (y)S &&  {_yE(Q_R)}=H(Q_R,{_yQ})
	\ar[rr]^-{H(Q_R,\phi_{y,Q}^{-1})} && H(Q_R,(y)S\hotimes_S Q)=(F\circ T)((y)S)}\]
Observe that we have used \Cref{fingen}\eqref{fingenE} in the equality of the middle term. 
Thus $\alpha_{(y)S}$ is an isomorphism for every $y$. 
Since $T$ commutes with direct sums, and by \Cref{fingen}\eqref{fingenCommutes}, so does $F$, if $L$ is a direct sum of modules of the form $(y)S$, then $\alpha_L$ is an isomorphism. 
By \Cref{GCat}, there is an exact sequence 
    $$L_2\to L_1\to N\to 0$$
with $L_1$ and $L_2$ direct sums of modules of the form $(y)S$. 
Moreover, $T$ are right exact and, the assumption implies that $F$ is exact, by \Cref{HExact}. Hence we have the following commutative diagram with exact rows 
    \[\xymatrix{L_2 \ar[d]^{\alpha_{L_2}} \ar[r] & L_1 \ar[d]^{\alpha_{L_1}} \ar[r] & N \ar[d]^{\alpha_{N}} \ar[r] & 0 & \\ H(Q_R, L_2 \hotimes _{S} Q) \ar[r] & H(Q_R, L_1 \hotimes _{S} Q) \ar[r] & H(Q_R, N \hotimes _{S} Q) \ar[r] & 0}\]
Since $\alpha_{L_1}$ and $\alpha_{L_2}$ are isomorphisms so is $\alpha_N$.

Furthermore, consider for each $M \in \rRGX$ the homomorphism of $Y$-graded right $S$-modules
	\[\beta_M : (T\circ F)(M)=H(Q_R,M) \hotimes Q \rightarrow M, \quad f \otimes q \mapsto f(q).\]
Observe that the following diagram is commutative, where $\mu:\widehat{S}\hotimes_S Q\to Q$ is the isomorphism from \Cref{TensorShift} and we have used again \Cref{fingen}\eqref{fingenE}
$$\xymatrix{\widehat{S}\hotimes_S Q \ar[rr]^-{\lambda_Q\otimes \Id_Q} \ar[rd]_{\mu}&& E(Q_R)\hotimes_S Q=H(Q_R,Q)\hotimes_S Q \ar[ld]^{\beta_Q} \\
&Q}$$
Then $\beta_Q$ is an isomorphism. Now the same argument as above, using that $Q_R$ is a generator of $\rRGX$ and $F$ is exact and commutes with direct sums, shows that $\beta_M$ is an isomorphism for every $M\in \rRGX$
This shows that $\alpha:\Id_{\rSHY}\to F\circ T$ and $\beta:T\circ F\to \Id_{\rRGX}$ are natural isomorphisms and hence $F$ and $T$ define an equivalence of categories.

\eqref{EquiP} implies \eqref{EquiF}. Suppose that $P$ satisfies \eqref{EquiP} and let $F'=H(P_S,-)$. Using that \eqref{EquiQ} implies \eqref{EquiF} with $P$ taking the role of $Q$, it follows that $F'$ is an equivalence of categories. Then $-\hotimes_S P$ is an equivalence of categories, as it is the left adjoint of $F'$. Thus $F$ is an equivalence of categories.

Finally, if the equivalent conditions \eqref{EquiF}-\eqref{EquiP} hold, then \eqref{EquiQI}-\eqref{EquiIzq} follow from the symmetry of \eqref{EquiPQ}.
\end{proof}



\section{\underline{Restriction and extension of scalars}}\label{SectionRestExt}

In this section we introduce a graded version of restriction and extension of scalars. It turns out that, as in the ungraded case, the extension functor is the left adjoint of the restriction of scalars, and the latter has a right adjoint. We also study when these functors define an equivalence of categories.

The initial data for restriction and extension of scalars in Ring Theory is a ring homomorphism. In our context we need also a groupoid homomorphism and a map connecting the grading sets, compatible with the gradings. More precisely, an \emph{admissible triple}, in the context of \Cref{convention}, is formed by 
\begin{itemize}
	\item a ring homomorphism $\rho:R\to S$, 
	\item a groupoid homomorphism $\gamma:G\to H$, and
	\item a map $\chi:X\to Y$ 
\end{itemize}
satisfying the following conditions for every $g\in G$, $e\in G_0$ and $x\in X_{t(g)}$:
\begin{equation}\label{AdmissibleCond}
\rho(R_g)\subseteq S_{\gamma(g)}, \quad \chi(X_e)\subseteq Y_{\gamma(e)}, \quad
    \chi(x\cdot g)=\chi(x)\cdot \gamma(g).
\end{equation}
%
\emph{In the remainder of the section  $(\rho,\gamma,\chi)$ is an admissible triple in the context of \Cref{convention}.}

Before defining restriction of scalars we must advise that the naive definition of restriction yields modules that may not be unital or graded. 
Indeed, if $N \in \rSHY$, then even if $N$ is unital as $S$-module, $N$ might not be unital as $R$-module. 
For example, $S$ is not necessarily unital $R$-bimodule. Indeed, $S$ is unital as a $R$-bimodule if and only $\rho(\E(R_{G_0}))$ is a set of local units of $S$, equivalently if $\rho(U)$ is a set of local units of $S$.
Moreover, in the naive $X$-grading $N_x=N_{\chi(x)}$, the $N_x$'s may intersect non-trivially if $\chi$ is not injective. 
To deal with these issues we define the \emph{restriction of scalars functor}
    $$\Res:\rSHY\to \rRGX$$
as follows: For $N,N'\in \rRGX$ and a morphism $\nu:N\to N'$ in $\rSHY$ we set
\begin{eqnarray*}
\Res(N)=\oplus_{x\in X} \Res(N)_x, & \text{with} & \Res(N)_x=N_{\chi(x)}\cap NR; \text{ and }\\
    \Res(\nu):\Res(N)\to \Res(N'), & \text{with} & \Res(\nu)(n_x)=\nu(n)_x;
\end{eqnarray*}
for every $x\in X$ and $n\in N_{\chi(x)}\cap NR$, where $n_x$ denotes the copy of $n$ in $\Res(N)_x$. 
Observe that if $x,x'\in X$, and $n\in N_{\chi(x)}\cap N_{\chi(x')}\cap NR$, then $n$ has a copy  $n_x\in\Res(N)_x$ and another one $n_{x'}\in \Res(N)_{x'}$. Then $n_x=n_{x'}$ if and only if $x=x'$ or $n=0$.
The product for homogeneous elements $n_x$ and $r\in R_g$ is defined by setting
    $$n_xr=\begin{cases} (n\rho(r))_{x\cdot g} & \text{if } x\in X_{t(g)}; \\
    0 & \text{otherwise}.\end{cases}$$
It is easy to see that $\Res$ is exact and preserves direct sums. Hence, as a consequence of \Cref{proptensadj} and \Cref{LeftAdjointP}, we have the following proposition.

\begin{proposition}
Consider the $(Y,X)$-bigraded $(S,R)$-bimodule 
$$Q=\bigoplus_{x\in X, y\in Y} {_yQ_x}, \text{ with} \quad  {_yQ_x}=SR \cap \left(\bigoplus_{h\in H, y\cdot h=\chi(x)} S_h\right) \text{ for } x\in X \text{ and }y\in Y.$$
Then $\Res$ is left adjoint of the functor $H(Q_R,-)$. 
\end{proposition}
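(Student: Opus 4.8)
The statement asserts that $\Res$ is left adjoint to $H(Q_R,-)$ for a specific bimodule $Q$. Since the text immediately before the proposition has already observed that $\Res$ is exact and preserves direct sums, \Cref{proptensadj} guarantees that $\Res$ is naturally isomorphic to $-\hotimes_R P$ for the $(X,Y)$-bigraded $(R,S)$-bimodule $P=\bigoplus_{x\in X}\Res((x)R)$ (this is the explicit formula recorded in \Cref{LeftAdjointP}), and \emph{that} functor has right adjoint $H(P_S,-)$. So the plan is: first, compute $\Res((x)R)$ explicitly and identify $P$; second, exhibit a bigraded bimodule isomorphism $P\cong Q$ for the $Q$ in the statement; and third, invoke \Cref{proptensadj}(3) to conclude that $H(Q_R,-)$ is a right adjoint of $\Res$. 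Note that the $Q$ in the statement is a right $R$-, left $S$-bimodule, so it plays the role of the second variable in $H(-_R,-)$ viewed as a functor $\rRGX\to\rSHY$ — wait, one must be careful about variances here: $H(Q_R,-)$ takes $\rRGX$ to $\rSHY$ when $Q$ is $(Y,X)$-bigraded $(S,R)$, which matches, since $\Res:\rSHY\to\rRGX$ and its right adjoint goes $\rRGX\to\rSHY$. Good.

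The concrete computation is the heart of it. Recall $(x)R=\bigoplus_{g\in G,\ t(g)\in E_x}R_g$ with $(x)R_y=\bigoplus_{x\cdot g=y}R_g$. Restriction of scalars along $(\rho,\gamma,\chi)$ turns this into an $S$-module; by the definition of $\Res$ on a module $N$ we have $\Res(N)_{x'}=N_{\chi(x')}\cap NR$, but here $\Res$ is being applied in the other direction — we need $\Res$ as a functor $\rSHY\to\rRGX$, whereas $(x)R$ is an object of $\rRGX$. The subtlety is that $P=\bigoplus_x F((x)R)$ in \Cref{LeftAdjointP} uses $F=\Res$, so actually the $(x)R$ there are the generators of the \emph{source} category $\rSHY$; that is, the correct formula is $P=\bigoplus_{y\in Y}\Res((y)S)$, a $(Y,X)$-bigraded bimodule. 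So one computes $\Res((y)S)$: as an $S$-module $(y)S=\bigoplus_{h\in H,\ t(h)\in E_y}S_h$ with $(y)S_{y'}=\bigoplus_{y\cdot h=y'}S_h$, and then $\Res((y)S)_x=\big((y)S\big)_{\chi(x)}\cap (y)S\cdot R = \Big(\bigoplus_{h:\, y\cdot h=\chi(x)}S_h\Big)\cap SR$, where the $R$-action is through $\rho$, so $(y)S\cdot R = (y)S\cdot\rho(R)\subseteq SR$. This is exactly the formula $_yQ_x$ in the statement. One then checks the bimodule structures agree: the left $S$-action on $P$ is the obvious one and the right $R$-action is the one induced by $\Res$, i.e. via $\rho$, which is precisely how $Q$ is defined as a bimodule inside $SR$.

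Thus the proof reduces to: (i) verify that $\Res$ is indeed right exact and preserves direct sums — already asserted, but worth a line, since right exactness follows because $\Res$ only reindexes and intersects with the unital part $NR$, an exact operation on unital modules, and coproducts are preserved since $(\oplus N_i)R=\oplus(N_iR)$ and $\chi$-preimages commute with direct sums; (ii) carry out the computation $\bigoplus_{y\in Y}\Res((y)S)\cong Q$ as $(Y,X)$-bigraded $(S,R)$-bimodules, which is bookkeeping with the definitions of shifts, of $\Res$, and of admissible triples; (iii) apply \Cref{proptensadj}, item (3), which tells us $H(Q_R,-)$ is a right adjoint of $-\hotimes_R Q\cong\Res$. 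I expect step (ii) — matching $P=\bigoplus_y\Res((y)S)$ with the stated $Q$, including checking that the grading decomposition $_yQ_x$ is correctly recovered and that no collapsing phenomena interfere (the $\chi$ non-injectivity is absorbed precisely by taking the intersection with $SR$ inside each copy) — to be the only place requiring genuine care; everything else is a direct appeal to results already in hand.
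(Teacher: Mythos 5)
Your proposal is correct and follows the paper's own route: the paper likewise notes that $\Res$ is exact and preserves direct sums and then invokes \Cref{proptensadj} together with \Cref{LeftAdjointP}, so that $\Res\cong-\hotimes_S Q$ with $Q=\bigoplus_{y\in Y}\Res((y)S)$, whose right adjoint is $H(Q_R,-)$. Your mid-proof self-correction (replacing $\bigoplus_{x}\Res((x)R)$ by $\bigoplus_{y}\Res((y)S)$) lands on exactly the identification the paper leaves implicit in the displayed formula for ${_yQ_x}$.
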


The \emph{extension of scalars}
    $$\Ind :\rRGX \to \rSHY$$
is defined in the standard fashion:
    $$\Ind(M)=M\otimes_R S.$$
for $M\in \rRGX$.
The first issue with restriction of scalars does not show up here because we are assuming that $S$ has graded local units and therefore $\Ind(M)$ is unital as $S$-module.
The $Y$-grading in $\Ind(M)$ is defined as follows:
    \[\Ind(M)_y = \left\{\sum_{i=1}^k m_i\otimes s_i : m_i\in M_{x_i}, s_i\in S_{h_i} \text{ with } h_i\in H,  x_i\in \chi^{-1}(Y_{t(h_i)}) \text{ and }\chi(x_i)\cdot h_i=y\right\}.\]
We should verify that $\Ind(M)=\oplus_{y\in Y} \Ind(M)_y$. That the sum is direct follows by standard arguments but that the equality holds is not completely obvious.
The key observation is that if $m\in M_x$, $s\in S_h$ and $m\otimes s\ne 0$, then $\chi(x)\in Y_{t(h)}$ and hence $m\otimes s\in \Ind(M)_{\chi(x)\cdot h}$. Indeed, take $u\in U$ with $m=mu$. Then $m=mu^x$ and $u^x=\sum_{e\in E_x} u_e$.
Thus $0\ne m\otimes s=m\otimes \sum_{e\in E_x} \rho(u_e)s$ and so $\rho(u_e)s\ne 0$ for some $e\in E_x$ and $\rho(u_e)\in S_{\gamma(e)}$. Therefore $\gamma(e)=t(h)$ and $x\in X_e$, and hence $\chi(x)\in Y_{\gamma(e)}=Y_{t(h)}$, as desired.

\begin{proposition}\label{ExtensionRestrictionAdjoint}
$\Ind$ is left adjoint to $\Res$.
\end{proposition}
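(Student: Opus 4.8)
The plan is to construct the adjunction isomorphism directly, exhibiting the unit and counit, in the spirit of the classical tensor-hom adjunction for extension and restriction of scalars, but taking care of the grading bookkeeping imposed by $\chi$ and $\gamma$. Concretely, for $M\in\rRGX$ and $N\in\rSHY$ I would define a natural map
\[
\Theta_{M,N}\colon \Hom_{\rSHY}(\Ind(M),N)\longrightarrow \Hom_{\rRGX}(M,\Res(N))
\]
by sending a graded $S$-linear map $\varphi\colon M\otimes_R S\to N$ to the map $m\mapsto \Theta(\varphi)(m)$ whose homogeneous component of degree $x$, for $m\in M_x$, is the copy $\big(\varphi(m\otimes s)\big)_x$ of $\varphi(m\otimes s)$ in $\Res(N)_x$, where $s\in V$ is chosen with $\rho$-image acting as a local unit on $\varphi(m\otimes{-})$; more precisely one uses that $m=mu$ for $u\in U$, so $m\otimes s'=m\otimes\rho(u)s'$ for all $s'$, and picks $v\in V$ with $v$ fixing the relevant finite data, setting $\Theta(\varphi)(m)_x=\varphi(m\otimes v)_x$. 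In the other direction, $\Phi_{M,N}\colon \Hom_{\rRGX}(M,\Res(N))\to\Hom_{\rSHY}(\Ind(M),N)$ sends $\psi$ to the $S$-linear extension $m\otimes s\mapsto \psi(m)s$, where the right-hand side is computed in $N$ via the forgetful identification of $\Res(N)_x$ with $N_{\chi(x)}\cap NR$.

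The key steps, in order, are: (i) check $\Theta(\varphi)$ is a well-defined graded $R$-module homomorphism, which amounts to verifying independence of the choice of the local unit $v$ (exactly the argument used in \Cref{HomShift}, using \Cref{ArgumentoUnidades}) and compatibility with the twisted action $n_x r=(n\rho(r))_{x\cdot g}$, which follows from $\varphi$ being $S$-linear together with the admissibility relation $\chi(x\cdot g)=\chi(x)\cdot\gamma(g)$ and $\rho(R_g)\subseteq S_{\gamma(g)}$; (ii) check $\Phi(\psi)$ is well-defined on $M\otimes_R S$, i.e. that $m\otimes s\mapsto\psi(m)s$ respects the relation $mr\otimes s=m\otimes\rho(r)s$, which is immediate since $\psi$ is $R$-linear and $\psi(mr)=\psi(m)r$ which acts as $\psi(m)\rho(r)$ in $N$ by construction of the $R$-action on $\Res(N)$; (iii) check $\Phi(\psi)$ is graded of the correct degree, which is precisely the ``key observation'' already proved in the paragraph preceding the proposition, namely that $m\otimes s\ne 0$ with $m\in M_x$, $s\in S_h$ forces $\chi(x)\in Y_{t(h)}$ and $m\otimes s\in\Ind(M)_{\chi(x)\cdot h}$, so $\psi(m)s$ lands in $N_{\chi(x)\cdot h}$; (iv) verify $\Theta$ and $\Phi$ are mutually inverse and natural in both $M$ and $N$, which is a routine diagram chase once (i)--(iii) are in place.

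Alternatively, and more cleanly, I would invoke the machinery already developed rather than building the adjunction by hand. Since $\Res$ is exact and preserves direct sums, by \Cref{proptensadj} and \Cref{LeftAdjointP} it is naturally isomorphic to $-\hotimes_S Q'$ for $Q'=\bigoplus_{y\in Y}\Res((y)S)$; one computes $\Res((y)S)$ explicitly and identifies it with the shift modules, so $\Res\cong -\hotimes_S \widehat{?}$ composed with a relabelling, whose left adjoint is a hom-like functor; then one checks that hom-like functor is naturally isomorphic to $\Ind=-\otimes_R S$ by comparing their values on the generators $(x)R$ (using \Cref{TensorShift}) and invoking that a right-exact direct-sum-preserving functor is determined by its restriction to the generators (the argument in the proof of \Cref{proptensadj}, (2)$\Rightarrow$(3)). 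Either route works; I prefer the explicit unit/counit route because it makes the naturality transparent and avoids an extra identification of functors.

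The main obstacle, in either approach, is the grading and local-units bookkeeping rather than any conceptual difficulty: one must consistently track that $\Res(N)_x$ is the \emph{copy} $N_{\chi(x)}\cap NR$ rather than a literal subspace (so that two copies $n_x,n_{x'}$ are distinct when $x\ne x'$ even though $n$ is the same element of $N$), and that passing back through $\Phi$ forgets these copies correctly; and one must repeatedly use \Cref{ArgumentoUnidades} to replace bare tensors $m\otimes s$ by $m\otimes\rho(u)s$ with $u\in U$ (or dually insert $v\in V$) in order to land inside the correct homogeneous component and to see well-definedness. None of these steps is deep, but the proof is only correct if every such insertion is justified by a genuinely available local unit, so the care lies in confirming that admissibility condition \eqref{AdmissibleCond} is invoked at exactly the points where degrees must match.
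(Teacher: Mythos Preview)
Your primary, direct approach is correct and is essentially the paper's own proof: the paper writes down the pair $\Phi(\phi)(m)=\phi(m\otimes\rho(u))_x$ (for $u\in U$ with $mu=m$) and $\Psi(\psi)(m\otimes s)=\iota_x(\psi(m))s$, and declares the remaining verifications straightforward. Your $\Theta$ and $\Phi$ are these same maps---once $v\in V$ is chosen with $\rho(u)v=\rho(u)$ one has $m\otimes v=m\otimes\rho(u)v=m\otimes\rho(u)$---and your checklist (i)--(iv) is precisely the content of those straightforward checks.

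Your alternative route via \Cref{proptensadj}, however, has the direction of the adjunction reversed. Identifying $\Res\cong -\hotimes_S Q'$ from exactness and preservation of direct sums exhibits $\Res$ as a \emph{left} adjoint, and \Cref{proptensadj} then supplies its \emph{right} adjoint $H(Q'_R,-)$; hom-like functors are right adjoints of tensor-like functors here, not left adjoints, so ``whose left adjoint is a hom-like functor'' does not typecheck, and there is no reason that right adjoint should be $\Ind$. To make the categorical route work you would instead invoke \Cref{Hrightadj}: show $\Res$ preserves direct products, deduce $\Res\cong H(P_S,-)$ for some $(X,Y)$-bigraded bimodule $P$, and then identify $-\hotimes_R P$ with $\Ind$. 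This is doable (that $P$ is exactly the bimodule \eqref{PResExt}, computed after the proposition via \Cref{ExtenShift}), but it is not the argument you sketched.
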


\begin{proof}
Let $M\in \rRGX$ and $N\in \rSHY$. Then we define maps
$$\Phi:\Hom_{\rSHY}(\Ind(M),N)\to \Hom_{\rRGX}(M,\Res(N))$$
and
$$\Psi:\Hom_{\rRGX}(M,\Res(N))\to \Hom_{\rSHY}(\Ind(M),N)$$
setting
$$\Phi(\phi)(m)=\phi(m\otimes \rho(u))_x \qand
\Psi(\psi)(m\otimes s)=\iota_x(\psi(m))s,$$
for $\phi\in \Hom_{\rSHY}(\Ind(M),N)$, $m\in M_x$, with $x\in X$, $\psi \in \Hom_{\rRGX}(M,\Res(N))$, $s\in S_h$ with $h\in H$ satisfying $\chi(x)\in Y_{t(h)}$ and $u\in U$ satisfying $m=mu$. Here $\iota_x$ is the map $\Res(N)_x\to N$ associating $n_x$ to $n$ for every $n\in N_{\chi(x)}\cap NR$.
It is straightforward to check that $\Phi$ and $\Psi$ are natural on both variables and inverse to each other.
\end{proof}

For the remainder of this section, we shall analyze the restriction and extension of scalars functors $\Res$ and $\Ind$ for specific choices of admissible triples.

\begin{example}[Classical restriction and extension of scalars]\label{exclassicalrestriction}
Let $G$ be a subgroupoid of a groupoid $H$ and let $S$ be an $H$-graded ring.
Then $R=S_G = \bigoplus\limits_{g \in G} S_g$ is a subring of $S$ and $R_g=S_g$ defines a $G$-grading on $R$ so that $U=\{v_{G_0}: v\in V\}$ is a set of graded local units of $S_G$.
\begin{enumerate}
\item Consider the regular right actions of $G$ and $H$ on themselves. Then
$\rRGX = \grr R$ and $\rSHY = \grr S$ (with $X=G$ and $Y=H$) and the inclusion maps $\iota_{S_G}:R\to S$ and $\iota_G:G\to H$ yield an admissible triple $(\iota_{S_G},\iota_G,\iota_G)$.
The restriction functor for this triple, which we denote $\Res^S_G$, is the standard grading restriction, 
	\[\Res^S_G : \grr S \to \grr R, \quad N\mapsto \Res^S_G(N)=N_G=\sum_{g\in G} N_g\]
The induction functor, which we denote $\Ind^S_G$, is the standard extension of scalars:
\[\Ind^S_G: \grr R \to \grr S, \quad M\to M\otimes_{R_G} S\]
with grading $(M\otimes_{R_G} S)_h= \bigoplus\limits_{g \in G, t(h) = t(g)} M_g \otimes S_{g^{-1} h}$.

Observe that if $h\in H$ is such that $t(h)\not\in G_0$, then $m\otimes s=0$ for every $m\in M$ and $s\in S_h$. Indeed, as $M_R$ is unitary by assumption, there is $u\in U$ such that $m=mu$. Then $m\otimes s=m\otimes us=0$, as $us=0$.

\item A slight modification of the previous example arises taking the triple $(\iota_{S_G},\iota_G,\chi_{H_G})$ where $H_G=\{h \in H: d(h) \in G_0\}$, with $G$ acting by right multiplication on $H_G$, and $\iota_{H_G}: H_G \rightarrow H$ again the inclusion map. 
In this case we denote $\res^S_G=\Res$ and $\ind^S_G=\Ind$, so $\res^S_G(N)=N_{H_G}$ and $\ind^S_G(M)=M\otimes_{R_{H_G}} S$.
Still, $\res^S_G(N)_x = N_x$ for $N\in \grr S$ and $x\in H_G$, but now $\ind^S_G(M)_h = \bigoplus\limits_{x \in H_G, t(x) = t(h)} M_{x} \otimes S_{x^{-1} h}$ for $M\in \rRGX$ and $h\in H$.

If $H_0 = G_0$, then $H_G = H$, and $\res^S_G$ and $\ind^S_G$ are the standard restriction and extension of scalars.
\end{enumerate}

An important instance occurs when $G = H_0$. In this case $\Res^S_{H_0}(N)=N_{H_0}$, considered as $H_0$-graded $R_{H_0}$-module, $H_G=H$ and $\res^S_{H_0}(N)=N$, considered as $H$-graded $R_{H_0}$-module. 
\end{example}

\begin{example}[Forgetful functors]\label{ForgetfulExtension}{\rm
Let $G$ be a groupoid and let $X$ be a right $G$-set. Let $\sim$ be an equivalence relation on $X$ compatible with the action in the following sense:
	\[\text{for every } g\in G \text{ and } x,x'\in X_{t(g)}, \text{ if } x\sim x' \text{ then } x\cdot g\sim x'\cdot g.\]
Let $\chi:X\to Y=X/\sim$ be the natural map associating $x\in X$ with the class containing $x$. Then $Y$ is a $G$-set with
    $$Y_e=\{\chi(x) : x\in X_e\}$$
for $e\in G_0$, and
    $$y\cdot g = \chi(x\cdot g)$$
for every $g\in G$, $y\in Y_{t(g)}$ and $x\in y\cap X_{t(g)}$. Then $(\Id_R,\Id_G,\chi)$ is an admissible triple. In this case, using the isomorphism $M\otimes_R R\cong M$ it follows that $\Ind$ is naturally isomorphic to a kind of partial forgetful functor associating $M\in \rRGX$ with $M\in \grr (R,G,X/\sim)$ with the $Y$-grading
    $$M_y=\bigoplus_{x\in y} M_x.$$
The functor $\Res:\grr (R,G,X/\sim)\to \rRGX$ associates $N$ with $\Res(N)=\oplus_{x\in X} N_{\chi(x)}$ with $X$-grading $\Res(N)_x=N_{\chi(x)}$ and product defined as follows: If $g\in G$, $r\in R_g$, $x\in X$ and $n\in N_\chi(x)$, then $[n]_{x}$ denotes the element of $\Res(N)_x$ equal to $n$ and
$$[n]_xr =\begin{cases} [nr]_{x\cdot g}, & \text{if } x\in X_{t(g)}; \\
0, & \text{otherwise}.\end{cases}$$

Some cases of this example are of particular interest:

\begin{enumerate}
\item Let $K$ be a subgroupoid of $G$ and consider the following set
    $$X=G_K=\{g\in G : t(g)\in K_0\}$$
Then $XG\subseteq X$ and hence we can consider $X$ as a right $G$-set. Furthermore the following defines an equivalence relation in $X$:
    $$x_1\sim x_2, \text{ if } x_1x_2^{-1}\in K.$$
The equivalence class containing $x$ is $\chi(x)=Kx$. Observe that if $G_0=K_0$, i.e. $K$ is a wide subgroupoid of $G$, then $X=G$  and $X/\sim=K\backslash G$ (see \Cref{ExGroupoids}\eqref{subgroupoid}).

\item Let $G$ and $H$ be groupoids, let $\chi: X=G \rightarrow Y=H$ be a surjective groupoid homomorphism, and consider $G$ acting on $X$ by right multiplication and on $Y$ by setting $y \cdot g = y\chi(g)$ for $g\in G$ and $y\in Y$. 
Then, $(\Id_R,\Id_G,\chi)$ is an admissible triple and the induction and restriction are the functors
    $$\Ind:\grr R \to \grr (R,G,H) \qand \Res:\grr (R,G,H) \to \grr R,$$
with
$$\Ind(M)_h=\bigoplus\limits_{x,g \in G: \chi(xg) = h} M_x \otimes R_g \cong \bigoplus\limits_{x\in X: \chi(x) = h} M_x,$$
for $M\in \grr R$ and $h\in H$, and 
$$\Res(N)_g= N_{\chi(g)},$$    
for $N\in \grr (R,G,H)$ and $g\in G$.

Two important cases are:

\begin{enumerate}
    \item If $|H|=1$, then $\Ind:\grr R\to \Mod\text{-}R$ is the classical forgetful functor $\mathcal{F}$ and hence $\Res$ is isomorphic to the right adjoint $\mathcal{G}$ of $\mathcal{F}$ (see \Cref{EjemploForgethful}).

    \item If $\chi=d:G\to G_0$, the domain map, then $\Ind(M)_e \cong  \bigoplus\limits_{g\in G: d(g) = e} M_g$, and $\Res(N)_g= N_{d(g)}$, for each $g \in G$.
\end{enumerate}
\end{enumerate}
}
\end{example}

\section{\underline{Applications}}

In this section we obtain two applications of \Cref{Equivalencia}. The first one characterizes when the functors of restriction and extension of scalars are equivalences of categories. The second one focuses on deciding when a category of graded modules is equivalent to a category of modules.

\subsection{When restriction and extension of scalars are equivalences of categories?}

In this subsection $(\rho,\gamma,\chi)$ is an admissible triple, in the context of \Cref{convention}, and $\Res$ and $\Ind$ are the corresponding restriction and extension of scalars functors.
We start obtaining a description of $\Ind((x)R)$ as a $Y$-graded submodule of $(\chi(x))S$. 

\begin{lemma}\label{ExtenShift}
The map
    $$\mu:R\otimes_R S\to RS, \quad r\otimes s \mapsto \rho(r)s$$
is an isomorphism of left $S$-modules.
For every $x\in X$, $\mu$ restricts to an isomorphism
    $$\mu_x:\Ind((x)R) \to \rho((x)R)S=\{s \in (\chi(x))S : s=\rho(u^x)s \text{ for some } u\in U\}$$
in $\rSHY$, where the target of the latter is considered as $Y$-graded submodule of $(\chi(x))S$.
\end{lemma}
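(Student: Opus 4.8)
The plan is to first establish that the map $\mu : R\otimes_R S\to RS$, $r\otimes s\mapsto\rho(r)s$ is an isomorphism of left $S$-modules, and then track what $\mu$ does to the summand $\Ind((x)R) = (x)R\otimes_R S$ of $R\otimes_R S$. For the first part, surjectivity is immediate from the definition of $RS$. For injectivity, I would run the same trick used in the proof of \Cref{TensorShift}: given an element $\sum_i r_i\otimes s_i$ of the kernel, pick $u\in U$ with $ur_i=r_i$ for all $i$ (possible since $R$ has graded local units), and then $\sum_i r_i\otimes s_i=\sum_i ur_i\otimes s_i=u\otimes\sum_i\rho(r_i)s_i=u\otimes 0=0$. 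The $S$-linearity on the left is clear from the formula. Note $R\otimes_R S$ here is the full tensor product; but since $S$ is unital as a left $S$-module one checks $RS=S$ only if $\rho(U)$ generates local units of $S$, which we do not assume, so we must keep $RS$ as written.

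Next I would identify the image $\mu(\Ind((x)R))=\rho((x)R)S$. Recall $(x)R=\bigoplus_{g\in G,\,t(g)\in E_x}R_g$, so $\rho((x)R)S$ is the left-$S$-submodule of $RS$ generated by the $\rho(R_g)$ with $t(g)\in E_x$; since each $\rho(R_g)\subseteq S_{\gamma(g)}$ and $t(\gamma(g))=\gamma(t(g))\in \gamma(E_x)$, and $\chi(x)\in Y_{\gamma(t(g))}$ by the admissibility conditions \eqref{AdmissibleCond}, we get $\rho(R_g)S\subseteq (\chi(x))S$ (using $(\chi(x))S=\bigoplus_{h\in H,\,t(h)\in E_{\chi(x)}}S_h$ and that $S_hS_{h'}\subseteq S_{hh'}$ when defined). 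So $\rho((x)R)S\subseteq(\chi(x))S$. For the alternative description, I would argue: if $s\in\rho((x)R)S$ then $s=\sum_i\rho(a_i)s_i$ with $a_i\in(x)R$; choosing $u\in U$ with $a_iu=a_i$ for all $i$ (unital as right $R_{G_0}$-modules) and then noting $a_iu=a_iu^x$ since $a_i$ is supported in degrees $g$ with $t(g)\in E_x$ — actually the cleaner route is $u^xa_i'$ on the left — gives $s=\rho(u^x)s'$ for a suitable $s'$. Conversely if $s=\rho(u^x)s$ for some $u\in U$, then $u^x\in(x)R_x\subseteq(x)R$, so $s\in\rho((x)R)S$. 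This also pins down that the target lies in $(\chi(x))S$ and is $Y$-graded, since $\rho(u^x)\in S_{\gamma(E_x)}$ lands in the homogeneous-degree part fixing $\chi(x)$.

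Finally I would check that $\mu_x$ is a morphism in $\rSHY$, i.e. that it respects the $Y$-gradings. On the source, by the definition of $\Ind$ the degree-$y$ component of $\Ind((x)R)$ is spanned by $m\otimes s$ with $m\in(x)R_{x'}$, $s\in S_h$, $\chi(x')\cdot h=y$ — but here $x'$ ranges only over the single point relevant to the shift, and using the key observation in the paragraph before \Cref{ExtensionRestrictionAdjoint} (that $m\otimes s\neq0$ forces $\chi(x')\in Y_{t(h)}$ and then $m\otimes s\in\Ind((x)R)_{\chi(x')\cdot h}$), everything is consistent; applying $\mu$ sends $m\otimes s$ to $\rho(m)s\in S_{\gamma(g)h}$ where $g$ is the degree of $m$, and $\chi(x')\cdot h = (\chi(x)\cdot\gamma(g))\cdot h = \chi(x)\cdot(\gamma(g)h)$, matching the grading of $(\chi(x))S$. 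Since $\mu$ is already known to be an $S$-module isomorphism and it carries the source bijectively onto the target subset, $\mu_x$ is an isomorphism in $\rSHY$. I expect the main obstacle to be the bookkeeping in the second paragraph: carefully verifying that the two descriptions of the target set coincide and that it genuinely sits inside $(\chi(x))S$ as a \emph{graded} submodule, which requires juggling the admissibility relations $\chi(X_e)\subseteq Y_{\gamma(e)}$ and $\chi(x\cdot g)=\chi(x)\cdot\gamma(g)$ together with the local-units argument; the isomorphism statement for $\mu$ itself is routine given \Cref{TensorShift}.
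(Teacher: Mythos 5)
Your plan follows the paper's own proof step for step: bijectivity of $\mu$ by the standard local-units computation, the grading check for $\mu_x$ via the admissibility relations \eqref{AdmissibleCond}, and the identification of the image with $\{s\in(\chi(x))S : s=\rho(u^x)s \text{ for some } u\in U\}$ using $u^x$. The one spot that needs repair is the middle of your second paragraph: the identity $a_iu=a_iu^x$ is false. For $a_i\in(x)R$, which is supported in $G$-degrees $g$ with $t(g)\in E_x$, the element $u^x=\sum_{e\in E_x}u_e$ absorbs on the \emph{left}: if $ua_i=a_i$ then $u^xa_i=ua_i=a_i$, since $ua_i=\sum_g u_{t(g)}(a_i)_g$; on the right one only gets $a_iu=\sum_{x'}(a_i)_{x'}u^{x'}$, where $(a_i)_{x'}$ are the components in the $X$-grading of $(x)R$, and the relevant $x'$ need not be $x$. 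You do flag that the cleaner route is on the left, which is exactly the paper's argument, but you should carry it through: choose $u\in U$ with $ua_i=a_i$ for all $i$; then $\rho(u^x)s=\sum_i\rho(u^xa_i)s_i=s$, which is the required condition $s=\rho(u^x)s$ with the \emph{same} $s$. Your conclusion that $s=\rho(u^x)s'$ for a suitable $s'$ is weaker than the defining condition of the target set; it can be upgraded because $u^x$ is an idempotent (a sum of the orthogonal idempotents $u_e$), but the direct left-unit computation is simpler.

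Two cosmetic remarks: $\mu$ is right (not left) $S$-linear --- the word ``left'' in the statement is a slip of the paper, and $\rho((x)R)S$ is a graded \emph{right} $S$-module, so your phrase about $S$-linearity on the left should be adjusted accordingly; and in the grading check one should also record the degenerate case $\rho(r)s=0\in(\chi(x))S_y$ when $\gamma(g)h$ is undefined, as the paper does. Otherwise the argument is complete and matches the paper's.
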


\begin{proof}
Obviously $\mu$ is a homomorphism of right $S$-modules.
That $\mu$ is bijective follows from standard arguments using that $R$ has local units.

We firstly prove that $\mu_x$ is a morphism in $\rSHY$.
For that it suffices to show that if
$r\in R_g\setminus \{0\} \subseteq (x)R_{x'}$ and $s\in S_h$ with $\chi(x')\cdot h=y$, then $\rho(r)s\in (\chi(x))S_y$.
Indeed, since $R_g\setminus \{0\} \subseteq (x)R_{x'}$, we have $x\cdot g=x'$ and hence $\chi(x)\cdot \gamma(g)=\chi(x')$. Now, if $\gamma(g)h$ is not defined, then $\rho(r) s = 0 \in (\chi(x))S_y$, as desired. Otherwise, i.e., if $\gamma(g) h$ is defined, then $\rho(r)s\in S_{\gamma(g)h}$ and  
	\[ \chi(x)\cdot(\gamma(g)h) = (\chi(x)\cdot \gamma(g))\cdot h =\chi(x')\cdot h=y.\] 
Therefore, $\rho(r)s\in (\chi(x))S_y$, as desired. 

Let $A=\{s \in (\chi(x))S : s=\rho(u^x)s \text{ for some } u\in U\}$.
We now prove that $\mu(\Ind((x)R))=A$.
Obviously, $A \subseteq \rho((x)R)S=\mu(\Ind((x)R))$.
To prove the reverse inclusion take $r\in (x)R$.
If $g$ belongs to the support of $r$, then $x\in X_{t(g)}$. Thus $\chi(x)\in Y_{\gamma(t(g))}=Y_{t(\gamma(g))}$.
As $\rho(r_g)\in S_{\gamma(g)}$, it follows that $\rho(r_g)\in (\chi(x))S$.
Thus $\rho(r)=\sum_{g\in G}\rho(r_g) \in (\chi(x))S$. Therefore $\rho((x)R)S\subseteq ((x)\chi)S$.
Let $u\in U$ satisfying $ur=r$.
Then $u^x\in (x)R$ and $u^xr=r$.
Thus $\rho(r)s=\rho(u^x)\rho(r)s$ for every $s\in S$.
This shows that $\rho((x)R)S\subseteq A$.
\end{proof}

By \Cref{ExtenShift} we can identify $\Ind((x)R)_y$ and $\rho((x)R)S_y$, which we denote  ${_xP_y}$. 
Let  
\begin{equation}\label{PResExt}
P=\bigoplus_{x\in X, y\in Y} \rho((x)R)S_y.
\end{equation}
By \Cref{LeftAdjointP}, $P$ is an $(X,Y)$-graded $(R,S)$-bimodule and $\Ind$ is naturally isomorphic to $-\hotimes P$.
The next two lemmas characterize when $\{{_xP} :x\in X\}$ is a set of generators of $\rSHY$, and when $\lambda_P$ is bijective. Both are  necessary conditions for $\Ind$ to be an equivalence of categories, by \Cref{Equivalencia}.





\begin{lemma}\label{Tastgenerator}
Let $P$ be the bimodule defined in \eqref{PResExt}. Then, the following conditions are equivalent:
	\begin{enumerate}
		\item\label{PGen} $\{{_xP}:x\in X\}$ is a set of generators of $\rSHY$.
		
		\item\label{OneSetLU} There is a set of graded local units $V'$ of $S$ satisfying the following condition
		\begin{itemize}
		\item[(*)] For every $y\in Y$ and every $v\in V'$ there are $u\in U$, $h_1,\dots,h_n\in H$ and $x_1,\dots,x_n\in X$, such that $y=\chi(x_i)\cdot h_i$ for every $i=1,\dots,n$ and $v^y\in \sum_{i=1}^n S_{h_i^{-1}}\rho(u^{x_i}) S_{h_i}$.		
		\end{itemize}
		
		\item\label{AllSetsLU} Condition (*) holds for every set of graded local units $V'$ of $S$
		
		\item\label{AllUnits} Condition (*) holds for $V'=\E(S_{H_0})$.
		\end{enumerate}
\end{lemma}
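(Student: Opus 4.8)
The plan is to convert condition (1) into an explicit statement about how the module $(y)S$ is covered by the submodules ${_xP}$, and then to recognise that statement as a rewording of (*); the equivalences among (2), (3) and (4) will then be automatic, because the intermediate statement will involve no distinguished set of graded local units, while $\E(S_{H_0})$ is always such a set.

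First I would rewrite (1). By \Cref{GCat}, $\{(y)S:y\in Y\}$ is a generating family of $\rSHY$; hence $\{{_xP}:x\in X\}$ generates $\rSHY$ if and only if, for every $y\in Y$, the object $(y)S$ equals the sum of the images of all graded morphisms ${_xP}\to(y)S$, $x\in X$. To describe these I would invoke \Cref{ExtenShift}: for $u\in U$ the element $\rho(u^x)$ is a homogeneous idempotent of degree $\chi(x)$ of $(\chi(x))S$, and ${_xP}=\bigcup_{u\in U}\rho(u^x)S$ is a directed union of cyclic submodules. Thus a graded morphism $f\colon{_xP}\to(y)S$ is on each $\rho(u^x)S$ determined by the value $f(\rho(u^x))$, which (since $f$ is graded and $f(\rho(u^x))=f(\rho(u^x))\rho(u^x)$) ranges exactly over $(y)S_{\chi(x)}\rho(u^x)=\{w\in(y)S_{\chi(x)}:w=w\rho(u^x)\}$, while $\Imagen f=\bigcup_{u}f(\rho(u^x))S$. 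Consequently $\sum_{x}\sum_{f}\Imagen f=\sum_{x\in X}\sum_{u\in U}(y)S_{\chi(x)}\,\rho(u^x)\,S$. On the other hand $(y)S$ is generated as an $S$-module by $S_{E_y}$, and $\bigcup_{v\in V'}v^yS\supseteq S_{E_y}$ for any set $V'$ of graded local units of $S$, so $(y)S=\sum_{v\in V'}v^yS$. Combining, (1) is equivalent to
\begin{equation}\label{PlanCover}
v^y\in\sum_{x\in X}\sum_{u\in U}(y)S_{\chi(x)}\,\rho(u^x)\,S\qquad\text{for every }y\in Y\text{ and every }v\in V',
\end{equation}
for some (equivalently, any) set $V'$ of graded local units of $S$; since $\E(S_{H_0})$ is one such set, it then suffices to identify \eqref{PlanCover} with (*).

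That identification is the last step. One direction is immediate: if $\chi(x_i)\cdot h_i=y$ then $S_{h_i^{-1}}\subseteq(y)S_{\chi(x_i)}$, because $(y)S_{\chi(x)}=\bigoplus_{h\colon\chi(x)\cdot h=y}S_{h^{-1}}$, so $S_{h_i^{-1}}\rho(u^{x_i})S_{h_i}\subseteq(y)S_{\chi(x_i)}\rho(u^{x_i})S$, and (*) implies \eqref{PlanCover}. Conversely, starting from a membership of the form \eqref{PlanCover}, I would write $v^y=\sum_j w_j\rho(u_j^{x_j})s_j$ with $w_j\in(y)S_{\chi(x_j)}$, expand $w_j=\sum_{h}(w_j)_{h^{-1}}$ over $h$ with $\chi(x_j)\cdot h=y$, break $\rho(u_j^{x_j})$ and $s_j$ into homogeneous components, and project the identity onto $S_{H_0}$, where $v^y$ lives. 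Using that $S_{h^{-1}}\rho(u_j^{x_j})\subseteq S_{h^{-1}}$ and that for $k\in H$ the product $h^{-1}k$ is an idempotent precisely when $k=h$, only the matching pieces $(s_j)_h$ survive, giving $v^y=\sum_j\sum_h(w_j)_{h^{-1}}\rho(u_j^{x_j})(s_j)_h$ with each $\chi(x_j)\cdot h=y$. Finally I would pick $u\in U$ with $u\ge u_j$ for all $j$; then $\rho(u_j^{x_j})=\rho(u_j^{x_j})\rho(u^{x_j})$ and $(w_j)_{h^{-1}}\rho(u_j^{x_j})\in S_{h^{-1}}$, so each summand lies in $S_{h^{-1}}\rho(u^{x_j})S_h$, which is precisely the shape demanded by (*) (after relabelling the pairs $(x_j,h)$ as $(x_i,h_i)$). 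This yields \eqref{PlanCover}$\Leftrightarrow$(*) and finishes the proof.

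I expect the main obstacle to be the first step, namely obtaining the precise formula $\sum_x\sum_f\Imagen f=\sum_{x}\sum_{u}(y)S_{\chi(x)}\rho(u^x)S$: it requires the description of ${_xP}$ from \Cref{ExtenShift} (directed union of the $\rho(u^x)S$, each $\rho(u^x)$ a homogeneous idempotent), a local-units argument showing that every admissible value $w\in(y)S_{\chi(x)}\rho(u^x)$ is attained as $f(\rho(u^x))$ for some morphism $f$, and the observation that $(y)S$ is generated over $S$ by $S_{E_y}$. Once the decomposition $(y)S_{\chi(x)}=\bigoplus_{h\colon\chi(x)\cdot h=y}S_{h^{-1}}$ is in hand, the homogeneous-component bookkeeping and the passage to a common upper bound $u$ in the second step are routine.
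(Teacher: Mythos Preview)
Your argument is correct and close in spirit to the paper's, but the organization differs. The paper runs a cycle $(4)\Rightarrow(3)\Rightarrow(2)\Rightarrow(1)\Rightarrow(4)$: for $(1)\Rightarrow(4)$ it takes a specific epimorphism $\bigoplus_i\rho((x_i)R)S\twoheadrightarrow(y)S$ hitting $v^y$, writes $v^y=\sum_i f_i(s_i)$ with the $s_i$ homogeneous, and then massages the expression into the shape of (*); for $(2)\Rightarrow(1)$ it uses (*) to build, for each $a\in(y)S$, an explicit epimorphism from a finite sum of ${_xP}$'s onto $(y)S$ via the left-multiplication maps $\lambda_{s'_i}$. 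You instead isolate a single intermediate condition \eqref{PlanCover}, independent of the choice of $V'$, prove it equivalent to (1), and then identify it with (*); this makes the equivalence of (2), (3), (4) automatic. The underlying mechanics---the description ${_xP}=\bigcup_u\rho(u^x)S$ from \Cref{ExtenShift}, the passage to a common $u\in U$ dominating finitely many $u_j$, and the projection onto $S_{H_0}$ forcing the matching of homogeneous degrees---are the same in both arguments.
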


\begin{proof}
\eqref{AllUnits} implies \eqref{AllSetsLU}, and \eqref{AllSetsLU} implies \eqref{OneSetLU} are straightforward.

\eqref{PGen} implies \eqref{AllUnits}. Let $y \in Y$ and $v\in \E(S_{H_0})$.
Then $v^y\in (y)S_y$.
By assumption there exist $x_1,\dots x_n \in X$ and a morphism $f: \bigoplus\limits_{i = 1}^n \rho((x_i)R)S \rightarrow (y)S$
in $\rSHY$ such that $v^y \in \Imagen f$. Write $f = \sum_{i=1}^n f_i$, with $f_i\in \Hom_{\rSHY}(\rho((x_i)R)S,(y)S)$. Then there exist $s_i \in \rho((x_i)R)S$, for $i = 1,\dots, n$, such that $v^y = \sum_i f_i(s_i)$. Moreover, we may assume that each $s_i$ is homogeneous, say of degree $h_i$, as element of the $H$-graded ring $S$.
As $v^y\in (y)S_y$, we also may assume that $s_i$ has degree $y$ as element of $(\chi(x_i))S$, i.e. $\chi(x_i)\cdot h_i=y$.
By the description of $\rho((x_i)R)S$ in \Cref{ExtenShift}, for every $i=1,\dots,n$ there is $u_i\in U$ such that $\rho(u_i^{x_i})s_i=s_i$.
Let $u\in U$ such that $u_iu=u_i$ for every $i=1,\dots,n$.
Then $u_i^{x_i}u^{x_i}=u_i^{x_i}$ 
for every $i=1,\dots,n$ and
	\[v^y=\sum_{i=1}^n f_i(\rho(u_i^{x_i})\rho(u^{x_i})s_i) = 
	\sum_{i=1}^n f_i(\rho(u_i^{x_i}))\rho(u^{x_i})s_i.\]
Furthermore, the support of $v$ is contained in $H_0$ and $\rho(u^{x_i})s_i\in S_{h_i}$ for every $i$. Therefore, if $s'_i$ is the component of degree $h_i^{-1}$ of  $f_i(\rho(u_i^{x_i}))$, then
$v^y=\sum_{i=1}^n s'_i\rho(u^{x_i})s_i\in \sum_{i=1}^n S_{h_i^{-1}}\rho(u^{x_i})S_{h_i}$, as desired.

\eqref{OneSetLU} implies \eqref{PGen}. Let $V'$ be a set of graded local units of $S$ satisfying the conditions of \eqref{OneSetLU}.
Let $v\in V'$ and $y\in Y$. By assumption there are $u\in U$, $h_i\in H$, $x_i\in X$, $s_i\in S_{h_i}$ and $s'_i\in S_{h_i^{-1}}$ for $i=1,\dots, n$ such that $\chi(x_i)\cdot h_i=y$ for every $i=1,\dots,n$ and $v^y=\sum_{i=1}^n s'_i \rho(u^{x_i})s_i$.
Then $v^y=\sum_{i=1}^n \lambda_{s'_i}(\rho(u^{x_i})s_i)$, where $\lambda_{s'_i}\in \Hom_{\rSHY}((\chi(x_i))S,(y)S)$ is as defined in \eqref{leftmultmap}. Therefore $v^y$ belongs to the image of a morphism $f:\oplus_{i=1}^n \rho((x_i)R)S\to (y)S$ in $\rSHY$. If $a\in (y)S$, then there is $v\in V'$ such that $va=a$, but then $v^ya=a$. Therefore $a$ also belongs to the image of $f$.
By \Cref{GCat}, $\{(y)S:y\in Y\}$ is a set of generators of $\rSHY$. Hence the above argument shows that $\{\rho((x)R)S:x\in X\}$ is a set of generators of $\rSHY$ too.
\end{proof}

\begin{lemma}\label{rhobijective}
    Let $P$ be the bimodule defined in \eqref{PResExt}. Then,
    \begin{enumerate}
        \item $\lambda_P$ is injective if and only if the restriction of $\rho$ to $(x)R_{x'}$ is injective for each $x, x' \in X$.
        \item $\lambda_P$ is surjective if and only if $\rho(u^x)\left( (\chi(x))S_{\chi(x')}\right)\rho(u^{x'})\subseteq \rho((x)R_{x'})$ for each $u\in U$ and $x, x' \in X$.
    \end{enumerate}
\end{lemma}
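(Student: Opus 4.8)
The plan is to reduce both statements to the individual bigraded components of $\lambda_P$. Since $\lambda_P\colon\widehat R\to E(P_S)$ is a homomorphism of $X$-bigraded $R$-bimodules (routine, exactly as in the proof of \Cref{lambdalemma}), injectivity, resp.\ surjectivity, of $\lambda_P$ is equivalent to injectivity, resp.\ surjectivity, of each restriction $(x)R_{x'}={_x\widehat R_{x'}}\to{_xE(P_S)_{x'}}$. Throughout I will use the concrete description from \Cref{ExtenShift}: under the identification ${_xP}=\rho((x)R)S\subseteq(\chi(x))S$ the left $R$-action on $P$ is $a\cdot p=\rho(a)p$, so for $r\in(x)R_{x'}$ the morphism $\lambda_P(r)$ is the map ${_{x'}P}\to{_xP}$, $p\mapsto\rho(r)p$. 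I will also use that for $u\in U$ and $z\in X$ the element $u^z$ is an idempotent of $R$ lying in $(z)R_z$ with $\rho(u^z)\in{_zP_{\chi(z)}}$, that for $r$ whose support is dominated by $u$ one has $u^xr=r=ru^{x'}$ in the relevant degrees (\Cref{ArgumentoUnidades}), and that the defining condition of ${_xE(P_S)_{x'}}$ reads: $f\in\Hom_{\rSHY}({_{x'}P},{_xP})$ with $f(p)=\rho(u^x)\,f(\rho(u^{x'})p)$ for all $p$ and some $u\in U$.

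For part (1) the converse is immediate: if $0\neq r\in(x)R_{x'}$ with $\rho(r)=0$, then $\lambda_P(r)$ is the zero map, so $\lambda_P$ is not injective. For the direct implication, assume $\rho$ is injective on every $(x)R_{x'}$ and let $r\in(x)R_{x'}$ with $\lambda_P(r)=0$; choosing $u\in U$ with $u^xr=r=ru^{x'}$ and evaluating $\lambda_P(r)$ at $\rho(u^{x'})\in{_{x'}P}$ gives $0=\rho(r)\rho(u^{x'})=\rho(ru^{x'})=\rho(r)$, hence $r=0$.

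For part (2), suppose first the inclusion holds and let $f\in{_xE(P_S)_{x'}}$ with $f=ufu$. Put $q=f(\rho(u^{x'}))\in{_xP_{\chi(x')}}$. Using $f=ufu$, the idempotency of $\rho(u^x)$ and $\rho(u^{x'})$, and right $S$-linearity of $f$, one checks $q=\rho(u^x)\,q\,\rho(u^{x'})$, so $q\in\rho(u^x)\big((\chi(x))S_{\chi(x')}\big)\rho(u^{x'})$; by hypothesis $q=\rho(r)$ for some $r\in(x)R_{x'}$, which we may normalise so that $u^xru^{x'}=r$. It remains to see $f=\lambda_P(r)$. The key observation is that any $p'\in{_{x'}P}$ with $\rho(u^{x'})p'=p'$ can be read as $p'=\rho(u^{x'})\cdot p'$, a product of the module element $\rho(u^{x'})\in{_{x'}P}$ with the ring element $p'\in S$ on the right, so right $S$-linearity of $f$ yields $f(p')=f(\rho(u^{x'}))\,p'=qp'=\rho(r)p'$; for arbitrary $p\in{_{x'}P}$ one applies this to $p'=\rho(u^{x'})p$ and then uses $f(p)=\rho(u^x)f(p')$ together with $u^xr=r=ru^{x'}$ to get $f(p)=\rho(r)p$. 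Conversely, assume $\lambda_P$ is surjective and fix $u\in U$, $x,x'\in X$ and $s\in(\chi(x))S_{\chi(x')}$. Left multiplication by $t=\rho(u^x)s\rho(u^{x'})$ defines a morphism $g\colon{_{x'}P}\to{_xP}$ in $\rSHY$ (the $Y$-grading is preserved because $t\in(\chi(x))S_{\chi(x')}$, and $tp\in\rho((x)R)S$), and $g=u'gu'$ for any $u'\in U$ with $u'\ge u$, so $g\in{_xE(P_S)_{x'}}$. Writing $g=\lambda_P(r)$ with $r\in(x)R_{x'}$ and evaluating at $\rho(u^{x'})$ gives $\rho(ru^{x'})=t$, hence $t\in\rho((x)R_{x'})$.

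The step I expect to be the main obstacle is the reconstruction $f=\lambda_P(r)$ in the surjectivity argument — recognising that $p'=\rho(u^{x'})\cdot p'$ may be interpreted as a module-by-ring-element product so that right $S$-linearity collapses $f(p')$ to $f(\rho(u^{x'}))\,p'$, and then propagating this identity from the $u$-stable elements to all of ${_{x'}P}$. The remaining ingredients — the bigraded reduction, the idempotent bookkeeping for the $u^z$, and the check that left multiplication by $t$ lands in $E(P_S)$ — are of the same routine nature as the computations in \Cref{lambdalemma} and \Cref{ExtenShift}.
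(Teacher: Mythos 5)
Your proposal is correct and follows essentially the same route as the paper's proof: evaluating at the idempotents $\rho(u^{x'})$ for injectivity and for recovering preimages, realising elements of $\rho(u^x)\left((\chi(x))S_{\chi(x')}\right)\rho(u^{x'})$ as left-multiplication operators in ${_xE(P_S)_{x'}}$, and using $f=ufu$ together with right $S$-linearity to collapse $f(\rho(u^{x'})p)$ to $f(\rho(u^{x'}))p$. The only cosmetic differences (the explicit bigraded-component reduction and the normalisation $u^xru^{x'}=r$, which the paper avoids by working with $ru^{x'}$ directly) do not change the argument.
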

\begin{proof}
(1) Suppose that $\lambda_P$ is injective. Let $x, x' \in X$ and $r \in (x)R_{x'} \cap \Ker \rho$. Then, for $p \in P$
    $$\lambda_P(r)(p) = \rho(r)\cdot {_{x'}p} = 0,$$
    and therefore, $\lambda_P(r) = 0$, which implies $r = 0$, by assumption. Thus, the restriction of $\rho$ to $(x)R_{x'}$ is injective.

    Conversely, suppose now that $\rho|_{(x)R_{x'}}$ is injective for every $x, x' \in X$. Let $r \in {_x(\Ker \lambda_P)_{x'}}$ and take $u \in U$ such that $ru = r$. Then, $ru^{x'}=r$, $\rho(u^{x'})\in {_{x'}P}$ and
    $$0 = \lambda_P(r)(\rho(u^{x'})) = \rho(r)\rho(u^{x'}) = \rho(ru^{x'}) = \rho(r).$$
    By assumption $r = 0$. This proves that $\lambda_P$ is injective.

(2)
Suppose that $\lambda_P$ is surjective and let $a = \rho(u^x)s\rho(u^{x'})$ with $u\in U$, $x,x'\in X$ and $s \in (\chi(x))S_{\chi(x')}$.
If $p\in \;_{x'}P$, then $p=\rho(^{x'}u_1)s'$ for some $u_1\in U$ and $s'\in S$. Then $ap=\rho(u^x)s\rho(u^{x'}u_1^{x'})s'\in \; _xP$. Hence,  
    $$\phi_{a}: {_{x'}P} \longrightarrow {_xP}, \quad p \mapsto ap.$$
defines an element of $H(P_S,{_xP})_{x'}$ which actually belongs to $_xE(P_S)_{x'}$,  as $u\phi_{a}u = \phi_a$. By assumption there exists $r \in {_x\widehat{R}_{x'}}=(x)R_{x'}$ such that $\lambda_P(r) = \phi_a$, and therefore
    $$\rho(ru^{x'}) = \rho(r)\rho(u^{x'}) = r\cdot \rho(u^{x'})= \lambda_P(r)(\rho(u^{x'})) = \phi_a(\rho(u^{x'})) = a\rho(u^{x'}) = a.$$
    Moreover, $ru^{x'} \in (x)R_{x'}$, and hence $a \in \rho((x)R_{x'})$, as desired.

Conversely, suppose that $\rho(u^x)\left((\chi(x)S)_{\chi(x')}\right)\rho(u^{x'})\subseteq \rho((x)R_{x'})$ for each $u\in U$ and $x, x' \in X$.
Let $f \in {_xE(P_S)_{x'}}$. Then $ufu = f$ for some $u \in U   $. Moreover, $\Imagen f\subseteq {_xP}\subseteq (\chi(x))S$ and $f(\bigoplus_{z\in X\setminus \{x'\}}{_zP})=0$. Thus $f=u^xfu^{x'}$. Moreover, $\rho(u^{x'})\in (\chi(x'))S_{\chi(x')}$ and hence  $f(\rho(u^{x'}))\in {_xP_{\chi(x')}}\subseteq (\chi(x))S_{\chi(x')}$.
Thus
    $$\rho(u^x)  f(\rho(u^{x'})) = \rho(u^x) f(\rho(u^{x'}))\rho(u^{x'}) \in \rho(u^x)\left((\chi(x)S)_{\chi(x')}\right)\rho(u^{x'}).$$
By assumption, there is $r \in (x)R_{x'}$ such that $\rho(r) = \rho(u^x)  f(\rho(u^{x'}))$.
For every $p \in P$, $_{x'}p\in S$ and, as $f\in {_xE(P)}_{x'}$ 
$$f(p)= f({_{x'}p}) = (u^xfu^{x'})({_{x'}p}) = \rho(u^x) f(\rho(u^{x'})\;{_{x'}p})    = \rho(u^x) f(\rho(u^{x'}))\; {_{x'}p}= \rho(r)\; {_{x'}p}=\lambda_P(r)(p).$$
Thus $\lambda_P(r)=f$.

\end{proof}

\begin{theorem}\label{EquivRest}
Let $(\rho:R\to S,\gamma:G\to H, \chi:X\to Y)$ be an admissible triple and let $U$ and $V$ be sets of graded local units of $R$ and $S$, respectively. Then the following conditions are equivalent:
\begin{enumerate}
    \item $\Ind:\rRGX\to \rSHY$ is an equivalence of categories. 
    \item  $\Res:\rSHY\to \rRGX$ is an equivalence of categories.
    \item \begin{enumerate}
    \item\label{EquivRestGen} For every $y\in Y$ and every $v\in V$ there are $u\in U$, $h_1,\dots,h_n\in H$ and $x_1,\dots,x_n\in X$, such that $y=\chi(x_i)\cdot h_i$ for every $i=1,\dots,n$ and $v^y\in \sum_{i=1}^n S_{h_i^{-1}}\rho(u^{x_i}) S_{h_i}$.
    \item\label{EquivRestIso} For each $x, x' \in X$,  $\rho$ restricts to a bijection $(x)R_{x'} \rightarrow \rho(U)\left((\chi(x))S_{\chi(x')}\right)\rho(U)$.
\end{enumerate}

\end{enumerate}
\end{theorem}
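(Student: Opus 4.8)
The plan is to derive everything from the general machinery already in place. First, the equivalence of (1) and (2) is formal: by \Cref{ExtensionRestrictionAdjoint} the pair $(\Ind,\Res)$ is adjoint, and adjoints are unique up to natural isomorphism, so if $\Ind$ is an equivalence then its quasi-inverse is a right adjoint of $\Ind$, hence naturally isomorphic to $\Res$, forcing $\Res$ to be an equivalence too; symmetrically, if $\Res$ is an equivalence then its quasi-inverse is a left adjoint of $\Res$, hence isomorphic to $\Ind$.

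The substance of the theorem is the equivalence of (1) and (3), which I would obtain by applying \Cref{Equivalencia}, in the form \eqref{EquiP}, to $F=\Ind$. Recall from \eqref{PResExt} and \Cref{ExtenShift} that $\Ind$ is naturally isomorphic to $-\hotimes_R P$ for the $(X,Y)$-bigraded $(R,S)$-bimodule $P=\bigoplus_{x,y}\rho((x)R)S_y=\bigoplus_{x\in X}{}_xP$ with ${}_xP=\rho((x)R)S$, and that by \Cref{LeftAdjointP} this $P$ is precisely $\bigoplus_{x\in X}\Ind((x)R)$, i.e. the bimodule produced inside the proof of \Cref{Equivalencia}. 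Thus \Cref{Equivalencia} says that $\Ind$ is an equivalence if and only if, for this particular $P$, (i) $\lambda_P\colon\widehat{R}\to E(P_S)$ is an isomorphism of $X$-bigraded $R$-bimodules, (ii) $P_S$ is a generator of $\rSHY$, and (iii) $u({}_xP)$ is finitely generated and projective in $\rSHY$ for every $x\in X$ and $u\in U$. I would then show that (iii) always holds, that (ii) is equivalent to (3)(a), and that (i) is equivalent to (3)(b).

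For (iii): transporting the left $R$-action on $P$ along the isomorphism $\mu_x$ of \Cref{ExtenShift} gives $u\cdot p=\rho(u^x)p$ for $p\in{}_xP$; since $u^x=\sum_{e\in E_x}u_e$ is a sum of orthogonal idempotents, $\rho(u^x)$ is idempotent, and using the description ${}_xP=\{s\in(\chi(x))S:s=\rho(u_1^x)s\text{ for some }u_1\in U\}$ one checks $u({}_xP)=\rho(u^x)\cdot(\chi(x))S$. Now $\rho(u^x)=\sum_{e\in E_x}\rho(u_e)\in\E(S_{H_0})$, and, with $v=\rho(u^x)$, one has $v^{\chi(x)}=v$, so $u({}_xP)=v\cdot(\chi(x))S$, which is finitely generated and projective in $\rSHY$ by \Cref{GCat} applied with $\E(S_{H_0})$ as a set of graded local units of $S$. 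For (ii): because $P=\bigoplus_x{}_xP$, the module $P_S$ is a generator of $\rSHY$ exactly when $\{{}_xP:x\in X\}$ is a set of generators, which by \Cref{Tastgenerator} (equivalence of \eqref{PGen} with \eqref{OneSetLU}, applied with $V'=V$) is precisely (3)(a). For (i): $\lambda_P$ is an isomorphism iff it is injective and surjective; by part (1) of \Cref{rhobijective} injectivity means $\rho|_{(x)R_{x'}}$ is injective for all $x,x'$, and by part (2) surjectivity means $\rho(u^x)\big((\chi(x))S_{\chi(x')}\big)\rho(u^{x'})\subseteq\rho((x)R_{x'})$ for all $u\in U$ and $x,x'\in X$. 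Combined with the always-valid reverse inclusion $\rho((x)R_{x'})\subseteq\rho(U)\big((\chi(x))S_{\chi(x')}\big)\rho(U)$ — which holds because every homogeneous $r\in(x)R_{x'}$ satisfies $r=u^xru^{x'}$ for a suitable $u\in U$ while $\rho(r)\in(\chi(x))S_{\chi(x')}$ — these conditions say exactly that $\rho$ restricts to a bijection $(x)R_{x'}\to\rho(U)\big((\chi(x))S_{\chi(x')}\big)\rho(U)$, i.e. (3)(b). Putting the three reductions together gives (1) $\Leftrightarrow$ (3), and together with the formal (1) $\Leftrightarrow$ (2) the theorem follows.

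The step I expect to be the main obstacle is the matching (i) $\Leftrightarrow$ (3)(b): one has to reconcile the symmetric two-sided set $\rho(U)\big((\chi(x))S_{\chi(x')}\big)\rho(U)$ appearing in the theorem with the ``for all $u$'' formulations of \Cref{rhobijective}, which requires routine but careful manipulation with common upper bounds in $U$ to see that $\bigcup_{u\in U}\rho(u^x)\big((\chi(x))S_{\chi(x')}\big)\rho(u^{x'})$ is additively closed, coincides with $\rho(U)\big((\chi(x))S_{\chi(x')}\big)\rho(U)$, and really is the target of the claimed bijection. The identification $u({}_xP)=\rho(u^x)\cdot(\chi(x))S$ in step (iii) also needs attention to the graded structure, but is routine given \Cref{ExtenShift}.
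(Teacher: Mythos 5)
Your proposal is correct and follows essentially the same route as the paper: (1)$\Leftrightarrow$(2) via the adjunction of \Cref{ExtensionRestrictionAdjoint}, then (1)$\Leftrightarrow$(3) by applying \Cref{Equivalencia}\eqref{EquiP} to $P=\bigoplus_{x}\rho((x)R)S$ from \eqref{PResExt}, noting that $u({_xP})=\rho(u^x)\cdot(\chi(x))S$ is always finitely generated projective by \Cref{GCat}, and matching the generator condition with (3)(a) via \Cref{Tastgenerator} and the bijectivity of $\lambda_P$ with (3)(b) via \Cref{rhobijective}. The only difference is that you spell out the reconciliation between the ``for all $u$'' formulations of \Cref{rhobijective} and the two-sided set $\rho(U)\bigl((\chi(x))S_{\chi(x')}\bigr)\rho(U)$ in (3)(b), a step the paper leaves implicit.
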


\begin{proof}
By \Cref{ExtensionRestrictionAdjoint}, $\Res$ is left adjoint to $\Ind$ and so (1) and (2) are equivalent. Moreover, by \Cref{LeftAdjointP} and \Cref{ExtenShift}, $\Ind$ is naturally isomorphic to $-\hotimes_R P$, for $P$ the $(X,Y)$-bigraded $(R,S)$-bimodule $P$ defined in \eqref{PResExt}. 
Furthermore, $u({_xP}) = \rho(u) \cdot (\chi(x))S$, for every $x \in X$ and $u \in U$. Then, by \Cref{GCat}, $u\cdot {_xP}$ is a finitely generated projective $Y$-graded right $S$-module. This is one of the conditions in statement \eqref{EquiP} in \Cref{Equivalencia}. So, by that theorem, conditions (1) is equivalent to the following: $P_S$ is a generator of $\rSHY$ and $\lambda_P$ is an isomorphism. 
Then the equivalence between (1) and (3) follows from \Cref{Tastgenerator} and \Cref{rhobijective}.
\end{proof}

By \Cref{Tastgenerator}, Condition \eqref{EquivRestGen} in \Cref{EquivRest} can be replaced by the following: There is a set of local units $V'$ of $S$ satisfying condition (*) in \Cref{Tastgenerator}. 

We finish this subsection applying \Cref{EquivRest} to particular cases of restriction and extension of scalars described in \Cref{exclassicalrestriction}.

\begin{corollary}\label{ResGEquiv}
Let $G$ be a subgroupoid of a groupoid $H$, let $S$ be a $H$-graded ring. Then, the following are equivalent:
    \begin{enumerate}
        \item $\Res^S_G:\grr S\to \grr S_G$ and $\Ind^S_G:\grr S_G\to \grr S$ define an equivalence of categories;
        \item For every $h \in H$, $S_{d(h)} = \sum_{g \in G} S_{h^{-1} g} S_{g^{-1}h}$;
        \item For every $h,k \in H$, $S_{hk} = \sum_{g \in G} S_{hg} S_{g^{-1}k}$
    \end{enumerate}
In that case $G_0$ contains $d(h)$ and $t(h)$ for every $h\in S$ with $S_h\ne 0$.
\end{corollary}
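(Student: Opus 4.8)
The plan is to read the corollary off \Cref{EquivRest} applied to the admissible triple $(\iota_{S_G},\iota_G,\iota_G)$ of \Cref{exclassicalrestriction}(1), in which $R=S_G$, $X=G$ and $Y=H$ carry their right regular actions, and $U=\{v_{G_0}:v\in V\}$. By \Cref{ExtensionRestrictionAdjoint} the statements $(1)$ and $(2)$ of the corollary are equivalent, so it suffices to prove $(1)\Leftrightarrow(2)\Leftrightarrow(3)$ and the final assertion. The real work is: (a) checking that condition \eqref{EquivRestIso} of \Cref{EquivRest} is automatic here, (b) rewriting \eqref{EquivRestGen} as statement $(2)$, and (c) the elementary equivalence $(2)\Leftrightarrow(3)$.

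For (a) and (b) I would first record the relevant data. For the right regular actions one has $E_x=\{d(x)\}$ in $\grr R$ and $E_y=\{d(y)\}$ in $\grr S$, so $u^{x}=v'_{d(x)}$ when $u=v'_{G_0}\in U$, and $v^{y}=v_{d(y)}$ for $v\in V$. For $x,x'\in G$ one computes $(x)R_{x'}=R_{x^{-1}x'}=S_{x^{-1}x'}$ and $(\chi(x))S_{\chi(x')}=S_{x^{-1}x'}$, both being $0$ unless $t(x)=t(x')$, and here $x^{-1}x'\in G$. Since $\rho$ is the inclusion $S_G\hookrightarrow S$, it restricts to the identity on each such component, and if $v\in V$ fixes an element $s$ of $S_{x^{-1}x'}$ then $v_{G_0}\in\rho(U)$ also fixes $s$, because $v_{G_0}s=v_{d(x)}s=vs=s$ and symmetrically $sv_{G_0}=s$; hence $\rho(U)\,S_{x^{-1}x'}\,\rho(U)=S_{x^{-1}x'}$. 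So \eqref{EquivRestIso} holds for all $x,x'$, and by \Cref{EquivRest} condition $(1)$ is equivalent to \eqref{EquivRestGen} alone. For the latter, $y=\chi(x_i)\cdot h_i$ means $y=x_ih_i$ in $H$, so $h_i=x_i^{-1}y$ with $t(x_i)=t(y)$, and then $S_{h_i^{-1}}\rho(u^{x_i})S_{h_i}=S_{y^{-1}x_i}\,v'_{d(x_i)}\,S_{x_i^{-1}y}\subseteq S_{y^{-1}x_i}S_{x_i^{-1}y}$; conversely every $S_{y^{-1}g}S_{g^{-1}y}$ with $g\in G$, $t(g)=t(y)$, is contained in a finite sum of such pieces, since $V$ has local units. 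So \eqref{EquivRestGen} amounts to: $v_{d(y)}\in\sum_{g\in G}S_{y^{-1}g}S_{g^{-1}y}$ for all $v\in V$, $y\in H$. As each $S_{y^{-1}g}S_{g^{-1}y}\subseteq S_{d(y)}$ and every $s\in S_{d(y)}$ satisfies $s=v_{d(y)}s$ for suitable $v\in V$, this is exactly $S_{d(y)}=\sum_{g\in G}S_{y^{-1}g}S_{g^{-1}y}$ for all $y\in H$, i.e.\ statement $(2)$ (after renaming $y$).

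For $(3)\Rightarrow(2)$ I would take $h^{-1}$ and $h$ as the two arguments of $(3)$, getting $S_{h^{-1}h}=\sum_{g\in G}S_{h^{-1}g}S_{g^{-1}h}$, i.e.\ $S_{d(h)}=\sum_{g\in G}S_{h^{-1}g}S_{g^{-1}h}$, which is $(2)$. For $(2)\Rightarrow(3)$, fix $h,k\in H$ with $d(h)=t(k)$ and $s\in S_{hk}$; choosing $v\in V$ with $vs=s$ gives $s=v_{t(h)}s$. Applying $(2)$ to $h^{-1}$ (whose domain is $t(h)$) writes $v_{t(h)}=\sum_i a_ib_i$ with $a_i\in S_{hg_i}$, $b_i\in S_{g_i^{-1}h^{-1}}$ and $g_i\in G$; then $b_is\in S_{g_i^{-1}h^{-1}}S_{hk}\subseteq S_{g_i^{-1}k}$ because $h^{-1}h=d(h)=t(k)$, so $s=\sum_i a_ib_is\in\sum_{g\in G}S_{hg}S_{g^{-1}k}$. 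The reverse inclusion is immediate from the grading axiom, so $S_{hk}=\sum_{g\in G}S_{hg}S_{g^{-1}k}$, which is $(3)$. Finally, granting these conditions, if $S_h\neq0$ then picking $v\in V$ fixing a nonzero element of $S_h$ yields $S_{d(h)}\neq0$ and $S_{t(h)}\neq0$; then $(2)$ applied to $d(h)$ (resp.\ to $t(h)$) forces some product $S_{d(h)g}S_{g^{-1}d(h)}$ (resp.\ $S_{t(h)g}S_{g^{-1}t(h)}$) to be nonzero with $g\in G$ and $t(g)=d(h)$ (resp.\ $t(g)=t(h)$), whence $d(h),t(h)\in G_0$.

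The step I expect to be most delicate is the bookkeeping in the second paragraph: correctly evaluating the two shift functors—one in $\grr R=\gr(R,G,G)$, the other in $\grr S=\gr(S,H,H)$—together with the quantities $u^{x}$, $v^{y}$ and $h_i$ appearing in \Cref{EquivRest}, and in particular recognizing that \eqref{EquivRestIso} is vacuous here precisely because multiplying a homogeneous component of $S$ on either side by an element of $\rho(U)$ returns that same component.
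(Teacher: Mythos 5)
Your proposal is correct and follows essentially the same route as the paper: apply \Cref{EquivRest} to the admissible triple of \Cref{exclassicalrestriction}(1), note that condition \eqref{EquivRestIso} is automatic because $\iota_{S_G}$ is the identity on each $(x)S_{x'}$, translate \eqref{EquivRestGen} into condition (2) via the degree argument $a_iu^{x_i}=a_i$, and settle $(2)\Leftrightarrow(3)$ and the last assertion by elementary manipulations with graded local units (your variants—applying (2) to $h^{-1}$ rather than to $k$, and deducing the final claim from (2) at $d(h)$ and $t(h)$ rather than from (3)—are equivalent to the paper's). The only blemish is the opening sentence attributing ``(1)$\Leftrightarrow$(2) of the corollary'' to \Cref{ExtensionRestrictionAdjoint} (adjointness only gives that $\Res$ is an equivalence iff $\Ind$ is), but this is harmless since you then prove $(1)\Leftrightarrow(2)$ through \Cref{EquivRest} anyway.
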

\begin{proof}
As $d(h)=h^{-1}h$, clearly, (3) implies (2). Furthermore, if (2) hold, then for every $h,k\in H$, $S_{hk}=S_{hk}S_{d(k)} = S_{hk} \sum_{g\in G} S_{k^{-1}g}S_{g^{-1}k} \subseteq \sum_{g\in G} S_{hg} S_{g^{-1}k}$. This proves (2) implies (3).

To prove that (1) and (2) are equivalent, notice that in this case, condition \eqref{EquivRestIso} of \Cref{EquivRest} is always satisfied, as $\iota_{S_G}$ restricts to the identity map of $(x)S_{x'}$, for every $x, x' \in G$. 
So it suffices to prove that condition (2) is equivalent to condition \eqref{EquivRestGen} in \Cref{EquivRest} and for that we can take $U = V \cap S_G$, as set of graded local units of $S_G$.
Suppose that condition \eqref{EquivRestGen} of \Cref{EquivRest} holds. Let $h\in H$ and $s\in S_{d(h)}$. Then $s=vsv$ for some $v\in V$ and therefore $s=v^hsv^h$. 
In this case $h\in H=Y$, $X=G$ and $\gamma=\chi=\iota_G:G\hookrightarrow H$ and $\rho=\iota_{S_G}$. So, by assumption, there are $u\in U=V\cap S_G:S_G\hookrightarrow S$, $h_1,\dots,h_n\in H$ and $g_1,\dots,g_n\in G$ such that $h=g_ih_i$ for every $i=1,\dots,n$ and $v^h\in \sum_{i=1}^n S_{h_i^{-1}}u^{g_i}S_{h_i}$. 
Since $S_{h_i^{-1}} u^{g_i} \subseteq S_{h^{-1}g_i}S_{d(g_i)}\subseteq S_{h^{-1}g_i}$, we have $v^h\in  \sum_{i=1}^n S_{h^{-1}g_i} S_{g_i^{-1}h}\subseteq \sum_{g\in G} S_{h^{-1}g}S_{g^{-1}h}$. 
Moreover, $s\in S_{d(h)}$ and therefore 
\[s=v^hsv^h\in \sum_{g,g_1\in G} S_{h^{-1}g}S_{g^{-1}h}S_{d(h)}S_{h^{-1}g_1}S_{g_1^{-1}h}\subseteq \sum_{g\in G} S_{h^{-1}g} S_{g^{-1}h}\]
So the direct inclusion in (2) holds, but the reverse inclusion is obvious. 
Conversely, suppose that (2) holds and let $y\in Y(=H)$ and $v\in V$. Then $v^h\in S_{d(h)}$ and hence, by assumption $v^h=\sum_{i=1}^n a_ib_i$ with $a_i\in S_{y^{-1}x_i}$ and $b_i\in S_{x_i^{-1}y}$ for some $x_1,\dots,x_n\in G(=X)$. Of course we are implicitly assuming that all the $x_i^{-1}y$ are defined, so that $h_i=x_i^{-1}y\in H$. Moreover, there is $u\in U(=S_G\cap V)$ such that $a_iu=a_i$ and hence $a_iu^{x_i}=a_i$, since $d(y^{-1}x_i)=d(x_i)$. Therefore $v^y=\sum_{i=1}^n a_i u^{x_i} b_i \in \sum_{i=1}^n S_{h_i^{-1}}\rho(u^{x_i})S_{h_i}$. This proves that condition \eqref{EquivRestGen} of \Cref{EquivRest} holds.

Suppose that (3) holds and let $h\in H$ with $S_h\ne 0$. Then $\sum_{g\in G} S_{hg}S_{g^{-1}d(h)} = S_{hd(h)}=S_h\ne 0$ and hence $S_{hg}\ne 0$ for some $g\in G$. In particular $d(h)=t(g)\in G_0$. Similarly, $\sum_{g\in G} S_{t(h)g} S_{g^{-1}h}=S_h\ne 0$ and therefore $t(h)=d(g)\in G_0$ for some $g\in G$.
\end{proof}

Specializing \Cref{ResGEquiv} to the case where $G=H_0$ yields the following corollary which shows that $\Res^R_{G_0}$ is an equivalence of categories if and only if $R$ is strongly graded. This generalizes Theorem 3.1.1 in \cite{NastasescuVanOystaeyen}. 

\begin{corollary}\label{strgrrestriction}
The following conditions are equivalent for a $G$-graded ring $R$, with $G$ a groupoid:
    \begin{enumerate}
    \item $\Res^R_{G_0}:\grr S\to \grr S_{G_0}$ and $\Ind^R_{G_0}:\grr S_{G_0}\to \grr S$ define an equivalence of categories.
    \item  $R$ is strongly graded.
    \item $R_{d(g)}=R_{g^{-1}}R_g$ for every $g\in G$.
    \ChAngel\item $R_{t(g)}=R_gR_\textbf{g}$ for every $g\in G$.    
\end{enumerate}
\end{corollary}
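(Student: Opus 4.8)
The plan is to obtain this as the $G=H_0$ specialization of \Cref{ResGEquiv}, exactly as announced in the sentence preceding the statement. I would apply \Cref{ResGEquiv} with the ambient groupoid taken to be $G$ itself, the $H$-graded ring taken to be $R$, and the subgroupoid taken to be $G_0$ (which is a subgroupoid of $G$: it is closed under inverses since $e^{-1}=e$ for $e\in G_0$, and under those of its products that are defined). Then $S_G$ of that corollary becomes $R_{G_0}=\bigoplus_{e\in G_0}R_e$, and $\Res^R_{G_0}$, $\Ind^R_{G_0}$ here are precisely the functors $\Res^S_G$, $\Ind^S_G$ there, in the instance discussed for $G=H_0$ in \Cref{exclassicalrestriction}. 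This already yields the equivalence of (1) with the two conditions furnished by \Cref{ResGEquiv}, namely $R_{d(g)}=\sum_{e\in G_0}R_{g^{-1}e}R_{e^{-1}g}$ for all $g\in G$, and $R_{gk}=\sum_{e\in G_0}R_{ge}R_{e^{-1}k}$ for all composable $g,k\in G$.

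The next step is to simplify these sums, which collapse to a single term because the indexing runs over idempotents. Fix $g\in G$. For $e\in G_0$ the product $g^{-1}e$ is defined iff $d(g^{-1})=e$, i.e. $e=t(g)$; in that case $g^{-1}e=g^{-1}$ and $e^{-1}g=t(g)g=g$. Hence the sum over $e\in G_0$ reduces to $R_{g^{-1}}R_g$, so the first condition of \Cref{ResGEquiv} becomes $R_{d(g)}=R_{g^{-1}}R_g$ for every $g$, which is (3). Likewise, if $gk$ is defined then for $e\in G_0$ the product $ge$ is defined iff $e=d(g)$, and then $ge=g$ and $e^{-1}k=d(g)k=k$; so the second condition of \Cref{ResGEquiv} reads $R_{gk}=R_gR_k$ for all composable $g,k$, i.e. $R$ is strongly graded, which is (2). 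Thus (1), (2) and (3) are equivalent.

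Finally, (3)$\Leftrightarrow$(4) is immediate from the fact that $g\mapsto g^{-1}$ is a bijection of $G$: substituting $g^{-1}$ for $g$ in (3) gives $R_{d(g^{-1})}=R_{(g^{-1})^{-1}}R_{g^{-1}}$, that is $R_{t(g)}=R_gR_{g^{-1}}$, and the substitution is its own inverse. One could alternatively quote the left-module analogue of \Cref{ResGEquiv}, but the substitution is shorter.

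There is no serious difficulty here; the only thing to watch is the bookkeeping in the collapse of the sums — correctly matching $d(g^{-1})$ with $t(g)$ and checking which of $g^{-1}e$, $e^{-1}g$, $ge$, $e^{-1}k$ is defined — together with confirming that the functors named in the statement are genuinely the instance of $\Res^S_G$, $\Ind^S_G$ covered by \Cref{exclassicalrestriction}.
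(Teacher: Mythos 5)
Your proposal is correct and follows exactly the route the paper intends: the paper derives this corollary simply by specializing \Cref{ResGEquiv} to the case $G=H_0$ (via the instance of $\Res^S_G$, $\Ind^S_G$ described in \Cref{exclassicalrestriction}), and your collapse of the sums $\sum_{e\in G_0}R_{g^{-1}e}R_{e^{-1}g}$ and $\sum_{e\in G_0}R_{ge}R_{e^{-1}k}$ to $R_{g^{-1}}R_g$ and $R_gR_k$, plus the substitution $g\mapsto g^{-1}$ for the equivalence of (3) and (4), is exactly the bookkeeping that specialization requires.
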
 

Imitating the arguments of \Cref{strgrrestriction} we get the following:

\begin{corollary}\label{ResHGEquiv}
	Let $G$ be a subgroupoid of a groupoid $H$, let $S$ be a $H$-graded ring. Then, the following are equivalent:
	\begin{enumerate}
		\item $\res^S_G$ and $\ind^S_G$ define an equivalence of categories between $gr\text{-}(S_G,G,H_G)$ and $\grr S$;
		\item \begin{enumerate}
			\item\label{ResHGEquivGen} $S_{d(h)}=\sum_{k\in H_G, t(k)=d(h)} S_kS_{k^{-1}}$ for every $h\in H$. 
			\item\label{ResHGEquivIso} If $h\in H_G$ satisfies $t(h)\in G_0$ and $S_h\ne 0$, then $h\in G$.
		\end{enumerate}
	\end{enumerate}
\end{corollary}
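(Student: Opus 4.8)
The plan is to deduce the result from \Cref{EquivRest}, applied to the admissible triple $(\iota_{S_G},\iota_G,\iota_{H_G})$ that defines $\res^S_G$ and $\ind^S_G$ in \Cref{exclassicalrestriction}; thus $R=S_G$, $X=H_G$, $Y=H$, $\rho=\iota_{S_G}$, and one may take $U=V\cap S_G$ as a set of graded local units of $S_G$. Since $\ind^S_G$ and $\res^S_G$ form an adjoint pair by \Cref{ExtensionRestrictionAdjoint}, condition (1) is equivalent to $\ind^S_G$ being an equivalence of categories, which by \Cref{EquivRest} is in turn equivalent to conditions \eqref{EquivRestGen} and \eqref{EquivRestIso} of that theorem; the finiteness-and-projectivity requirement of condition \eqref{EquiP} of \Cref{Equivalencia} is automatic here because $u({_xP})=\rho(u)\cdot(\chi(x))S$ is finitely generated projective by \Cref{GCat}, exactly as noted in the proof of \Cref{EquivRest}. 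So the task reduces to translating \eqref{EquivRestIso} into \eqref{ResHGEquivIso} and \eqref{EquivRestGen} into \eqref{ResHGEquivGen}.

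First I would make the bimodule $P$ of \eqref{PResExt} explicit: since $\rho$ and $\chi$ are inclusions, for $x,x'\in H_G$ one gets $(x)R_{x'}=S_{x^{-1}x'}$ when $x^{-1}x'\in G$ and $(x)R_{x'}=0$ otherwise, while $(\chi(x))S_{\chi(x')}=S_{x^{-1}x'}$ whenever $x^{-1}x'$ is defined in $H$. For \eqref{EquivRestIso} I would then argue by cases. If $x^{-1}x'\in G$, the restriction of $\rho$ to $(x)R_{x'}$ is the identity of $S_{x^{-1}x'}=R_{x^{-1}x'}$, and $\rho(U)R_{x^{-1}x'}\rho(U)=R_{x^{-1}x'}$ because $R$ has graded local units $U$, so the bijection asked for in \eqref{EquivRestIso} holds automatically. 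If $x^{-1}x'$ is defined but not in $G$, then $k=x^{-1}x'$ lies in $H_G$ with $t(k)=d(x)\in G_0$, and, using that the degree-$G_0$ components of $U$ are local units of the corresponding $S_e$, one has $\rho(U)S_k\rho(U)=S_k$, so \eqref{EquivRestIso} forces $S_k=0$. Running over all $x,x'$ and observing that every $k\in H_G$ with $t(k)\in G_0$ is of the form $x^{-1}x'$ (take $x=t(k)$, $x'=k$), one concludes that \eqref{EquivRestIso} is equivalent to \eqref{ResHGEquivIso}.

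Next I would translate \eqref{EquivRestGen}. For $y=h\in H$ and $v\in V$ one has $v^h=v_{d(h)}\in S_{d(h)}$ and $\rho(u^{x_i})=u_{d(x_i)}\in S_{d(x_i)}$, while $\chi(x_i)\cdot h_i=h$ reads $x_ih_i=h$ with $x_i\in H_G$. Putting $k_i=h_i^{-1}$, this becomes $k_i\in H_G$ with $t(k_i)=d(h)$ and $S_{h_i^{-1}}\rho(u^{x_i})S_{h_i}\subseteq S_{k_i}S_{k_i^{-1}}\subseteq S_{d(h)}$, the middle idempotent being absorbed by $S_{k_i}S_{d(k_i)}\subseteq S_{k_i}$; conversely every $k\in H_G$ with $t(k)=d(h)$ is realized by $h_i=k^{-1}$, $x_i=hk$, and the idempotent can be reinstated because $U$ furnishes local units for the modules $S_k$ with $d(k)\in G_0$. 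Hence \eqref{EquivRestGen} says precisely that each element of $S_{d(h)}$ belongs to $\sum_{k\in H_G,\ t(k)=d(h)}S_kS_{k^{-1}}$, and the passage between this and the equality in \eqref{ResHGEquivGen} is carried out as in the proof of \Cref{ResGEquiv}, via $s=v^hsv^h$ and the telescoping $S_kS_{k^{-1}}S_{d(h)}S_{k'}S_{k'^{-1}}\subseteq S_{k'}S_{k'^{-1}}$. Combining the two translations gives the equivalence of (1) and (2).

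The step I expect to be the main obstacle is the bookkeeping in the translation of \eqref{EquivRestGen}: one must pin down which morphisms $k$ actually occur in the relevant sums and check that they range over all of $\{k\in H_G:t(k)=d(h)\}$ (and not a proper subset), verify that the middle idempotents $\rho(u^{x_i})$ can be inserted and deleted at will using the graded local units of $S_G$, and confirm that the telescoping collapses to the stated sum. This is the same technical core as in the proof of \Cref{ResGEquiv}, transported from the indexing set $G$ to the larger set $H_G$.
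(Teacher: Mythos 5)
Your proof is correct and follows exactly the route the paper intends: it specializes \Cref{EquivRest} to the admissible triple $(\iota_{S_G},\iota_G,\iota_{H_G})$ of \Cref{exclassicalrestriction}(2) and translates conditions \eqref{EquivRestGen} and \eqref{EquivRestIso} by the same bookkeeping as in the proof of \Cref{ResGEquiv}, with the index set $G$ replaced by $H_G$ (the paper omits the proof precisely because it is this imitation). The only cosmetic point is that the set of graded local units of $S_G$ should be taken as $\{v_{G_0}:v\in V\}$ rather than $V\cap S_G$ (which could be empty), but your argument in fact only uses the degree-$G_0$ components of elements of $V$, so nothing is affected.
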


\subsection{When $\rRGX$ is equivalent to a category of modules?}

A positive answer to this question follows at once from \Cref{GCat} and Corollary 1.4 of \cite{AbramsMeninidelRio1994}. 
For completeness we give a short independent proof.

\begin{theorem}\label{EquivRGXmodS}
 Let $R$ be a $G$-graded ring and $X$ a $G$-set. Then, $\rRGX$ is equivalent to $\mod-S_X(R)$. In particular, $\rRGX$ is equivalent to the category of (unital) modules of a ring of local units.
\end{theorem}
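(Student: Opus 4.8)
The plan is to exhibit an explicit equivalence between $\rRGX$ and $\mathrm{Mod}\text{-}S_X(R)$ by realizing $S_X(R)$ as the endomorphism ring of the canonical projective generator $\widehat R$ and then invoking the machinery of \Cref{Equivalencia}. Concretely, I would take $S = S_X(R)$, $H$ a groupoid with a single object (so that $\rSHY$ with $Y$ trivial is just $\mathrm{Mod}\text{-}S_X(R)$ with its local units $\{u^X : u\in U\}$ from \Cref{SLocalUnits}), and produce an $(X,\{1\})$-bigraded $(R,S_X(R))$-bimodule $P$ together with a $(\{1\},X)$-bigraded $(S_X(R),R)$-bimodule $Q$ such that $\widehat R \cong P\hotimes_{S_X(R)} Q$ and $S_X(R) \cong Q\hotimes_R P$ as bigraded bimodules. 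The natural candidate for $P$ is $\widehat R = \oplus_{x\in X}(x)R$, viewed as a right $S_X(R)$-module via the matrix action $r\cdot (a_{x,x'}) = (\sum_{x}{}_{x'}(r\, a_{x,x'}))$—this is exactly the action packaged in \Cref{HRRMatrix}, where $E(\widehat R_R)\cong S_X(R)$—and $Q$ is the same underlying module $\widehat R$ regarded as a left $S_X(R)$-module and a right $R$-module, i.e. $Q = {}_{S_X(R)}\widehat R_R$.

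The key steps, in order: (1) verify that the matrix action of $S_X(R)$ on $\widehat R$ is well-defined, unital (using \Cref{SLocalUnits} that $u^X$ acts as $\sum_{x\in F}u^x$ on the relevant finite part), and compatible with the $R$-action so that $P$ and $Q$ are genuine bigraded bimodules in the sense of \Cref{convention}; (2) show $Q_R = \widehat R$ is a generator of $\rRGX$ (this is \Cref{GCat}) and that $u({}_xQ) = u(x)R$ is finitely generated projective for each $x$ and $u\in U$ (again \Cref{GCat}); (3) identify the canonical map $\lambda_Q : \widehat{S_X(R)} \to E(Q_R)$ with the isomorphism $S_X(R)\cong E(\widehat R_R)$ of \Cref{HRRMatrix}, noting that $\widehat{S_X(R)}$ collapses to $S_X(R)$ itself since $H$ has one object; (4) apply \Cref{Equivalencia}, condition \eqref{EquiQ}, to conclude that $H(Q_R,-) = \Hom_{\rRGX}(\widehat R, -) : \rRGX \to \mathrm{Mod}\text{-}S_X(R)$ is an equivalence of categories, with quasi-inverse $-\hotimes_{S_X(R)}\widehat R$. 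The final sentence of the theorem is then immediate since $S_X(R)$ has local units by \Cref{SLocalUnits}.

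The main obstacle I anticipate is bookkeeping in step (3): one must check that the natural map $\lambda_Q$ from the graded setting really does coincide, after the single-object simplification, with the concrete ring isomorphism $\Phi : E(\widehat R_R)\to S_X(R)$ of \Cref{HRRMatrix}, and in particular that the $X$-bigrading on $E(Q_R)$ is the one whose $(x,x')$-component matches $(x)R_{x'}$. This is essentially a matter of unwinding the definitions of $H(P_S,-)$, $E(P_S)$, and $\lambda_P$ against the explicit formulas for $\Phi$ and $\Psi$, but it is the place where a sign-or-index slip would hurt; everything else is a direct citation of \Cref{GCat}, \Cref{HRRMatrix}, \Cref{SLocalUnits}, and \Cref{Equivalencia}. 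An alternative, slightly more self-contained route—worth mentioning if the referee prefers it—is to avoid \Cref{Equivalencia} entirely and instead check directly that the unit $M \to \Hom_{\rRGX}(\widehat R, M)\hotimes_{S_X(R)}\widehat R$ and counit $\Hom_{\rRGX}(\widehat R, \widehat R \hotimes_{S_X(R)} N) \to N$ are isomorphisms, using \Cref{RGorroEquiv} and \Cref{HomShift} on the generating objects $(x)R$ and then extending along the presentations guaranteed by \Cref{GCat}; this is the ``short independent proof'' promised in the statement's preamble.
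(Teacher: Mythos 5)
Your proposal is correct and follows essentially the same route as the paper: take $Q=\widehat R$ as a $(\{1\},X)$-bigraded bimodule over (the trivially graded) endomorphism ring and apply \Cref{Equivalencia}\eqref{EquiQ}, with \Cref{GCat}, \Cref{HRRMatrix} and \Cref{SLocalUnits} supplying the hypotheses. The paper merely sidesteps the bookkeeping you worry about in step (3) by setting $S:=E(\widehat R_R)$ from the outset, so that $\lambda_Q$ is tautologically the identity, and only invoking \Cref{HRRMatrix} at the very end to replace $S$ by $S_X(R)$; also note that the relevant finite-generation condition in \eqref{EquiQ} is on $v({}_yQ)=v\widehat R$ for $v$ a local unit of $S$ (a finite sum of the $u(x)R$'s), not on $u(x)R$ itself.
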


\begin{proof}
    Consider the ring
    $S = E(\widehat{R}_R)$ graded by the trivial group $\{e\}$. Then, $\widehat{R}$ is naturally an $(\{e\},X)$-bigraded $(S,R)$-bimodule, which we denote $Q$ to distinguish it from the structure of $\widehat{R}$ of $X$-bigraded $R$-bimodule. Notice $S$ has local units by \Cref{SLocalUnits}. Moreover,  $uQ$ is a finitely generated projective $S$-module for every idempotent $u$ of $S$. Consider the functor $H(Q_R,-): gr-(R,G,X) \rightarrow \text{mod}\text{-}S$. Since $\widehat{S} = S = E(Q_R)$, all the conditions of \Cref{Equivalencia}\eqref{EquiQ} hold, and thus $H(Q_R,-)$ is an equivalence of categories. The  theorem then follows from \Cref{HRRMatrix} and \Cref{SLocalUnits}.
\end{proof}

However answering when $\rRGX$ is equivalent to a category of modules over a unital ring requires more work and the  remainder of the paper is dedicated to that question.

Given $M \in \grr R$ and $g\in G$, we set
	\[\END_R(M)_g = \left\{ f \in \End_R(M):  \text{ for all } h \in G, f(M_h) \subseteq 
	\begin{cases}
		M_{gh}, & \text{ if } gh\text{ is defined,}\\
		\{0\}, & \text{otherwise}
	\end{cases}  \right\},\]
and 
	\[\END_R(M) = \bigoplus_{g \in G} \END_R(M)_g.\] 
It is easy to see that this endows $\END_R(M)$ with a structure of $G$-graded ring. 

For $f \in \END_R(M)_g$, it is easy to see that
	\[\sum_{h\in H, f(M_h)\ne 0} M_h \subseteq \bigoplus_{h \in G: t(h) = d(g)} M_h = (d(g))M \qand 
	f((d(g))M_h) \subseteq (g^{-1})M_h.
	\]
We can therefore identify $\END_R(M)_g = \Hom_{\grr R} ((d(g))M, (g^{-1})M).$

Furthermore, $M=\oplus_{e\in G_0} (e)M$. For $e\in G_0$, consider the endomorphism $1_e$ of $M$ which is the identity on $(e)M$ and $0$ in $\oplus_{f\in G_0\setminus \{e\}} M_f$. Then, $1_e$ belongs to $\END_R(M)_e$ and $\Id_M=\sum_{e\in G_0} 1_e$. Observe that the previous sum may not be a finite sum, but when evaluated on an element $m\in M$ is a finite sum as $m\in (e)M$ if and only if $e=d(g)$ for some $g$ in the support of $m$. 
Notice that $\END_R(M)$ is an object unital $G$-graded ring in the sense of \cite{CalaLundstromPinedo2022} as $1_e\in \END_R(M)_e$ and $f=1_{t(g)}f=f1_{d(g)}$, for every $e\in G_0$,  $g\in G$ and $f\in \END_R(M)_g$.
In particular, $\END_R(M)$ has graded local units. 
A straightforward argument shows the following:

\begin{lemma}
The following conditions are equivalent:
\begin{enumerate}
    \item $\Id_M\in \END_R(M)$.
    \item $\END_R(M)$ has an identity.
    \item $\{e\in G_0 : (e)M\ne 0\}$ is finite.
\end{enumerate}
In particular, if $G_0$ is finite, then $\END_R(M)$ has an identity.
\end{lemma}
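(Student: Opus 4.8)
The plan is to prove the cycle of implications $(1)\Rightarrow(2)\Rightarrow(3)\Rightarrow(1)$ and then read off the last sentence. The implication $(1)\Rightarrow(2)$ is immediate: the multiplication of $\END_R(M)$ is the restriction of composition of $R$-linear endomorphisms of $M$, and $\Id_M$ is the identity of $\End_R(M)$, so once $\Id_M$ is known to lie in $\END_R(M)$ it is automatically a (two-sided) identity of $\END_R(M)$.

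For $(3)\Rightarrow(1)$ I would recall, from the paragraph preceding the lemma, that $1_e$ is the zero endomorphism exactly when $(e)M=0$, and that $\Id_M=\sum_{e\in G_0}1_e$ in the sense that both sides agree on every $m\in M$. If $\{e\in G_0:(e)M\ne 0\}$ is finite, this expression has only finitely many nonzero terms, hence is a genuine finite sum of elements $1_e\in\END_R(M)_e\subseteq\END_R(M)$; therefore $\Id_M\in\END_R(M)$.

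For $(2)\Rightarrow(3)$, let $\varepsilon$ be an identity of $\END_R(M)$ and write $\varepsilon=\sum_{g\in T}\varepsilon_g$ with $\varepsilon_g\in\END_R(M)_g$ and $T\subseteq G$ finite (this uses only that $\END_R(M)=\bigoplus_{g\in G}\END_R(M)_g$ is a direct sum, so every element has finite support over $G$). Fix $e\in G_0$ with $(e)M\ne 0$; then $1_e$ is a nonzero element of $\END_R(M)$, so $\varepsilon\,1_e=1_e\ne 0$, the product being composition. Now $1_e$ has image inside $(e)M=\bigoplus_{h\in G,\,t(h)=e}M_h$, while a homogeneous $\varepsilon_g$ kills $M_h$ unless $gh$ is defined, i.e.\ unless $d(g)=t(h)$; hence $\varepsilon_g\circ 1_e=0$ whenever $d(g)\ne e$, and $\varepsilon\,1_e=\sum_{g\in T,\,d(g)=e}\varepsilon_g\circ 1_e$. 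Since this is nonzero, there is some $g\in T$ with $d(g)=e$, so $\{e\in G_0:(e)M\ne 0\}\subseteq\{d(g):g\in T\}$ is finite. Finally, if $G_0$ is finite then $\{e\in G_0:(e)M\ne 0\}$ is finite, so $(3)$ --- and hence $(2)$ --- holds.

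I do not expect any genuine obstacle here; the only step that needs a little care is $(2)\Rightarrow(3)$, where one must track how the $G$-grading of $\END_R(M)$ meshes with the shift decomposition $M=\bigoplus_{e\in G_0}(e)M$, in particular that $\varepsilon_g\circ 1_e$ vanishes unless $d(g)=e$, together with the fact that membership in $\END_R(M)$ forces finite support over $G$.
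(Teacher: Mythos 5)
Your proof is correct and is the natural argument the paper has in mind (the paper only says ``a straightforward argument shows the following'' and omits the details): $(1)\Rightarrow(2)$ and $(3)\Rightarrow(1)$ follow directly from the pointwise identity $\Id_M=\sum_{e\in G_0}1_e$ set up in the preceding paragraph, and your $(2)\Rightarrow(3)$ step correctly uses that a homogeneous $\varepsilon_g$ composed with $1_e$ vanishes unless $d(g)=e$, so the finitely many homogeneous components of an identity element can only ``see'' finitely many idempotents $e$ with $(e)M\ne 0$. No gaps.
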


In particular, if $G$ is a group and $R$ is a $G$-graded ring, then $\END_R(M)$ is a unital $G$-graded ring for every graded $R$-module $M$. Observe that there are examples of group graded rings $R$ with graded modules $M$ such that $\END_R(M)\ne \End_R(M)$ (see e.g. \cite[Example~2.4.1]{NastasescuVanOystaeyen}). 

\begin{lemma}\label{ENDequalsEnd}
If $M$ is finitely generated, then $\END_R(M) = \End_R(M)$.
\end{lemma}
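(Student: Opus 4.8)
The plan is to prove the only non-obvious inclusion $\End_R(M)\subseteq\END_R(M)$ by decomposing an arbitrary $R$-endomorphism of $M$ into homogeneous ``graded components'' and then using finite generation to see that all but finitely many of them vanish; the reverse inclusion $\END_R(M)\subseteq\End_R(M)$ is immediate from the definition of $\END_R(M)$.

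Given $f\in\End_R(M)$, for each $g\in G$ I would define an additive map $f_g\colon M\to M$ by setting $f_g(m)=(f(m))_{gh}$ if $m\in M_h$ and $gh$ is defined, $f_g(m)=0$ if $m\in M_h$ and $gh$ is not defined, and extending additively. The degree condition ($f_g(M_h)\subseteq M_{gh}$ if $gh$ is defined and $f_g(M_h)=0$ otherwise) is then built in, so the only thing to check is that $f_g$ is $R$-linear, which yields $f_g\in\END_R(M)_g$. For $m\in M_h$ and $r\in R_a$ with $ha$ defined one has $f_g(mr)=(f(m)r)_{(gh)a}$, and the crucial point is the identity $(f(m)r)_{(gh)a}=(f(m))_{gh}\,r$: a term $(f(m))_{h'}r$ lies in degree $h'a$, which equals $(gh)a$ only if $h'=gh$, as one sees by right-cancelling $a^{-1}$ in the groupoid. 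This cancellation is exactly where invertibility of arrows enters (the statement genuinely fails for general semigroup gradings). When $ha$ is undefined, $mr=0$ and $(gh)a$ is also undefined, so both sides vanish.

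Next I would check that $f=\sum_{g\in G}f_g$ holds when evaluated at any element. For $m\in M_h$ pick $u\in U$ with $mu=m$; since $M_hR_e\neq 0$ forces $e=d(h)$, only the $d(h)$-component of $u$ can act, so $m=mu_{d(h)}$, hence $f(m)=f(m)u_{d(h)}$, and comparing homogeneous components forces $(f(m))_{h'}=0$ unless $d(h')=d(h)$. Since $g\mapsto gh$ is a bijection from $\{g:d(g)=t(h)\}$ onto $\{h':d(h')=d(h)\}$, this gives $f(m)=\sum_{d(h')=d(h)}(f(m))_{h'}=\sum_{g:\,d(g)=t(h)}(f(m))_{gh}=\sum_{g\in G}f_g(m)$ (the last equality because $f_g(m)=0$ for $d(g)\neq t(h)$), and the general case follows by additivity.

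Finally, finite generation enters. Fix generators $m_1,\dots,m_k$ of $M$ over $R$. As each $f_g$ is $R$-linear, $f_g\neq 0$ implies $f_g(m_i)\neq 0$ for some $i$, and since $f_g(m_i)=\sum_{h\in\Supp(m_i),\,d(g)=t(h)}\big(f((m_i)_h)\big)_{gh}$ this forces $gh\in\Supp\!\big(f((m_i)_h)\big)$ for some $i$ and some $h\in\Supp(m_i)$; thus $g=(gh)h^{-1}$ with $gh$ in the finite set $\Lambda=\bigcup_{i}\bigcup_{h\in\Supp(m_i)}\Supp\!\big(f((m_i)_h)\big)$ and $h$ in the finite set $\Sigma=\bigcup_i\Supp(m_i)$. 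Hence $\{g\in G:f_g\neq 0\}\subseteq\{ab^{-1}:a\in\Lambda,\,b\in\Sigma,\,d(a)=d(b)\}$ is finite, so $f=\sum_g f_g$ is a \emph{finite} sum of elements of the $\END_R(M)_g$, i.e.\ $f\in\END_R(M)$. I expect this last finiteness bookkeeping, together with the groupoid-cancellation step that makes each $f_g$ a well-defined $R$-linear (hence genuinely homogeneous) endomorphism, to be the only real content; everything else is routine. As a variant one could first invoke the preceding lemma — $M$ finitely generated makes $\{e\in G_0:(e)M\neq 0\}$ finite, so $\END_R(M)$ is unital — and then cut $f$ by the idempotents $1_e$ to reduce to maps between the fixed summands $(e)M$ before running the same component decomposition there.
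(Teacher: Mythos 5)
Your proof is correct and follows essentially the same route as the paper: the same homogeneous decomposition $f=\sum_g f_g$ with $f_g(m)=(f(m))_{gh}$ for $m\in M_h$, the same use of the local unit $u_{d(h)}$ to verify that the sum recovers $f$ on homogeneous elements, and the same finiteness argument locating the nonzero $f_g$ inside a finite set built from the supports of the images of a finite generating set. The only differences are cosmetic: you verify the $R$-linearity of $f_g$ explicitly (which the paper dismisses as clear) and work with not-necessarily-homogeneous generators via their supports, whereas the paper takes homogeneous generators from the start.
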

\begin{proof}
For every $g\in G$ and $m\in M$, let $m_g$ denote the $g$-th homogeneous component of $m$, and for every $f\in \End_R(M)$, let $f_g:M\to M$ be defined by $f_g(m)=\sum_{h\in G} f(m_h)_{gh}$, where $f(m_h)_{gh} = 0$ if $gh$ is not defined.
Clearly $f_g\in \END_R(M)_g$. 

We first show that $f_g=0$ for almost all $g\in G$.
Indeed, let $\{m_i \in i=1,\dots,n\}$ be a finite set of generators of $M$ with each $m_i$ homogeneous, say of degree $g_i$. Then $f_g(m_i)=f(m_i)_{gg_i}$. Therefore, if $f_g(m_i)\ne 0$, then $gg_i$ belongs to the support of $f(m_i)$. If $X$ is the union of the supports of the $f(m_i)$'s and $f_g\ne 0$, then $g\in \bigcup_{i=1}^n Xg_i^{-1}$. As the latter is a finite set, $f_g=0$ for almost all $g\in G$.

So it only remains to show that $f=\sum_{g\in G} f_g$. 
To prove this, it is enough to show that the two maps coincide on homogeneous elements. So, let $m\in M_h$. Then $f_g(m)=0$ if $d(g)\ne t(h)$, and there is $u\in U$ such that $m=mu$. Therefore $m=mu_{d(h)}$ and for every $\alpha\in \End_R(M)$, $\alpha(m)=\alpha(m)u=\alpha(m)u_{d(h)}$. Thus if $g$ belongs to the support of $\alpha(m)$, then $d(g)=d(h)$. Therefore 
    $$f(m)=\sum_{g\in G, d(g)=d(h)} f(m)_g = \sum_{g\in G, d(g)=d(h)} f_{gh^{-1}}(m) = 
    \sum_{g\in G, d(g)=t(h)} f_g(m)=\sum_{g\in G} f_g(m).$$

\end{proof}

\begin{lemma}\label{weaklydivide}
Let $M$ be a $G$-graded module. If for every $g \in G$, the module $(t(g))M$ is isomorphic in $\grr R$ to a direct summand of a direct sum of finitely many copies of $(g)M$, then $\END_R(M)$ is strongly graded.
\end{lemma}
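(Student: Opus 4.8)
The plan is to check the strong--grading identity directly on homogeneous components, and the key reduction is to the ``diagonal'' identity $\END_R(M)_{t(g)}=\END_R(M)_g\,\END_R(M)_{g^{-1}}$ for every $g\in G$. This suffices: if $g,h\in G$ and $gh$ is defined, then $d(g)=t(h)$, so $g^{-1}(gh)=(g^{-1}g)h=d(g)h=t(h)h=h$, and since $\END_R(M)$ has graded local units (as observed right after the definition of $\END_R(M)$, with $1_{t(gh)}$ acting as a left identity on $\END_R(M)_{gh}$) we get
\[
\END_R(M)_{gh}=\END_R(M)_{t(g)}\,\END_R(M)_{gh}=\END_R(M)_g\,\END_R(M)_{g^{-1}}\,\END_R(M)_{gh}\subseteq\END_R(M)_g\,\END_R(M)_h\subseteq\END_R(M)_{gh},
\]
forcing equality throughout (this reduction is also \Cref{strgrrestriction}(4) applied to $\END_R(M)$). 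The inclusion $\END_R(M)_g\,\END_R(M)_{g^{-1}}\subseteq\END_R(M)_{gg^{-1}}=\END_R(M)_{t(g)}$ is part of the grading axiom, so only the reverse inclusion uses the hypothesis.

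For the reverse inclusion I would first record the relevant identifications, all consequences of the description $\END_R(M)_k=\Hom_{\grr R}((d(k))M,(k^{-1})M)$ established before the statement. Specializing to $k=t(g)$, $k=g^{-1}$ and $k=g$, and using that $(g)M$ and $(d(g))M$ (respectively $(g^{-1})M$ and $(t(g))M$) have the same underlying $R$-module, one gets
\[
\END_R(M)_{t(g)}=\End_{\grr R}\!\big((t(g))M\big),\quad \END_R(M)_{g^{-1}}=\Hom_{\grr R}\!\big((t(g))M,(g)M\big),\quad \END_R(M)_g=\Hom_{\grr R}\!\big((g)M,(t(g))M\big),
\]
the last identity because an $R$-endomorphism of $M$ supported on $(d(g))M$ with image in $(g^{-1})M$ satisfies the shift-grading condition exactly when it is a graded morphism $(g)M\to(t(g))M$ (reindexing homogeneous components by left multiplication by $g$). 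Under the first identification $1_{t(g)}=\Id_{(t(g))M}$, which is the identity element of the ring $\END_R(M)_{t(g)}$.

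Now fix $g\in G$ and apply the hypothesis: choose morphisms $\iota\colon(t(g))M\to\big((g)M\big)^{\oplus n}$ and $\pi\colon\big((g)M\big)^{\oplus n}\to(t(g))M$ in $\grr R$ with $\pi\iota=\Id_{(t(g))M}$, and write $\iota=(\iota_1,\dots,\iota_n)$ (column) and $\pi=(\pi_1,\dots,\pi_n)$ (row), so $\pi\iota=\sum_{j=1}^n\pi_j\iota_j$. By the identifications above, $\iota_j\in\END_R(M)_{g^{-1}}$ and $\pi_j\in\END_R(M)_g$, hence $\sum_{j=1}^n\pi_j\iota_j=\Id_{(t(g))M}=1_{t(g)}$ in $\END_R(M)_{t(g)}$. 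Therefore, for any $\theta\in\END_R(M)_{t(g)}$,
\[
\theta=\theta\,1_{t(g)}=\sum_{j=1}^n(\theta\pi_j)\,\iota_j\in\END_R(M)_{t(g)}\,\END_R(M)_g\,\END_R(M)_{g^{-1}}\subseteq\END_R(M)_g\,\END_R(M)_{g^{-1}},
\]
using $t(g)g=g$ in the last step. Thus $\END_R(M)_{t(g)}=\END_R(M)_g\,\END_R(M)_{g^{-1}}$ for every $g\in G$, and by the first paragraph $\END_R(M)$ is strongly graded.

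The mathematical content is light once the homogeneous components of $\END_R(M)$ are identified with graded Hom-sets between shifts; I expect the only delicate point to be the bookkeeping behind $\Hom_{\grr R}((g)M,(t(g))M)=\Hom_{\grr R}((d(g))M,(g^{-1})M)$ (the same underlying maps between the same underlying modules, with the two grading conditions matched by reindexing via left multiplication by $g$), together with the routine verification that the product $\END_R(M)_g\,\END_R(M)_{g^{-1}}$, computed as composition of endomorphisms of $M$ extended by zero off the corners $(d(g))M$ and $(t(g))M$, is genuinely computed ``inside $(t(g))M$''. Both are straightforward with the material already developed.
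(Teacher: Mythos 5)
Your proof is correct and follows essentially the same route as the paper: you split $\Id_{(t(g))M}=\sum_j\pi_j\iota_j$ with $\pi_j\in\END_R(M)_g$ and $\iota_j\in\END_R(M)_{g^{-1}}$ (after identifying these homogeneous components with graded Hom-sets between shifts of $M$), deduce $\END_R(M)_{t(g)}=\END_R(M)_g\,\END_R(M)_{g^{-1}}$, and invoke the criterion of \Cref{strgrrestriction}. Your version is if anything slightly more careful, since you spell out the reduction from the diagonal identity to full strong grading and the reindexing $\Hom_{\grr R}((g)M,(t(g))M)=\Hom_{\grr R}((d(g))M,(g^{-1})M)$, which the paper leaves implicit.
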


\begin{proof}
Given $g \in G$ take $n \in \mathbb{N}$ such that $(t(g))M$ is isomorphic to a direct summand of  $((g)M)^n$. Then, there are homomorphisms of graded modules $f:((g)M)^n \rightarrow (t(g))M$ and $f': (t(g))M \to ((g)M)^n$ such that $f \circ f' = \Id_{(t(g))M}$. Since $M = \bigoplus_{e \in G_0} (e)M$, there exists maps $f_i \in \Hom_{\grr R}((g)M,M) = \END(M)_{g}$ and $f'_i \in \Hom_{\grr R}(M,(g)M) = \END(M)_{g^{-1}}$ such that $\sum_{i = 1}^n f_i \circ f'_i = \Id_{(t(g))M}$, and thus, $\END(M)_{t(g)} = END(M)_{g} END(M)_{g^{-1}}$, which implies that $END(M)$ is strongly graded (see \Cref{strgrrestriction}).
\end{proof}

The following theorem is a generalization of a result of C. Menini and C. Năstăsescu which characterize unital group graded rings whose category of graded modules is isomorphic to the category of modules of a unital ring. 

\begin{theorem}\label{MeniniNastasescu}
    Let $G$ be a groupoid and let $R$ be a $G$-graded ring with a set of graded local units. Then, the following are equivalent
    \begin{enumerate}
        \item\label{MNEquiv} $\grr R$ is equivalent to $\text{mod}\text{-}S$, for some unital ring $S$.
        \item\label{MNFGProjGen} $\grr R$ has a finitely generated projective generator.
        \item\label{MNFGGen} $\grr R$ has a finitely generated generator.
        \item\label{MNStronglyGraded}  $\grr R$ is equivalent to $\grr S$, for a strongly $G$-graded unital ring $S$.
        \item\label{MNugRGen} There exist an idempotent $u$ of $R$ and a finite subset $F$ of $G$ such that $\{u(g)R:g\in F\}$ is a set of generators of $\grr R$.
        \item\label{MNRdg} There exist an idempotent $u$ of $R$ and a finite subset $F$ of $G$ such that, for every $g \in G$, $R_{d(g)} = \sum_{h \in F} R_{g^{-1}h} u R_{h^{-1}g}$.
    \end{enumerate}
\end{theorem}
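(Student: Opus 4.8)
### Proof strategy for Theorem~\ref{MeniniNastasescu}

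The plan is to prove the chain of implications
$\eqref{MNStronglyGraded}\Rightarrow\eqref{MNEquiv}\Rightarrow\eqref{MNFGProjGen}\Rightarrow\eqref{MNFGGen}\Leftrightarrow\eqref{MNugRGen}\Leftrightarrow\eqref{MNRdg}$,
and then close the loop with $\eqref{MNugRGen}\Rightarrow\eqref{MNStronglyGraded}$, which will be the substantive step.

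The easy arcs go as follows. For $\eqref{MNStronglyGraded}\Rightarrow\eqref{MNEquiv}$: if $S$ is a strongly $G$-graded \emph{unital} ring, then by \Cref{strgrrestriction} the restriction $\Res^S_{G_0}:\grr S\to \grr S_{G_0}$ is an equivalence; since $S$ is unital with $G_0$ finite forced by strong grading plus identity \dots\ actually more carefully, $S_{G_0}=\bigoplus_{e\in G_0}S_e$ is a unital ring (the identity of $S$ decomposes into finitely many orthogonal idempotents $1_e\in S_e$), so $\grr S\simeq\Modr S_{G_0}$ with $S_{G_0}$ unital. For $\eqref{MNEquiv}\Rightarrow\eqref{MNFGProjGen}$: $\Modr S$ has $S_S$ as a finitely generated projective generator, and equivalences preserve this. $\eqref{MNFGProjGen}\Rightarrow\eqref{MNFGGen}$ is trivial. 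For $\eqref{MNFGGen}\Leftrightarrow\eqref{MNugRGen}$: by \Cref{GCat} the family $\{u(g)R:g\in G,u\in U\}$ consists of finitely generated projectives and generates $\grr R$, so any finitely generated generator $N$ is a quotient of a finite direct sum $\bigoplus_{i=1}^n u_i(g_i)R$; taking $u\in U$ above all the $u_i$ and $F=\{g_1,\dots,g_n\}$, the set $\{u(g)R:g\in F\}$ still generates (a quotient of a generator is a generator here since $N$ was). Conversely such a set is itself a finitely generated generator (take its direct sum). For $\eqref{MNugRGen}\Leftrightarrow\eqref{MNRdg}$: unwinding \Cref{HomShift}, the family $\{u(g)R:g\in F\}$ generates iff every $u'(d(k))R$ receives a nonzero map from some $u(g)R$ for each $k$ — more precisely, using that $(d(k))R_{d(k)}$ contains the local unit and that $\Hom_{\grr R}((g)R,(d(k))R)$ identifies with a homogeneous piece via \eqref{leftmultmap}, one checks the generating condition is exactly the equality $R_{d(k)}=\sum_{h\in F}R_{k^{-1}h}\,u\,R_{h^{-1}k}$; this is a direct translation of ``$u^{d(k)}\in\sum$ of images of the evaluation maps.''

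The hard implication is $\eqref{MNugRGen}\Rightarrow\eqref{MNStronglyGraded}$, and this is where the endomorphism-ring machinery built just above the theorem earns its keep. Given the finite $F$ and idempotent $u$, set $P=\bigoplus_{g\in F}u(g)R\in\grr R$; it is a finitely generated projective generator of $\grr R$. The natural candidate is $S=\END_R(P)$, which by \Cref{ENDequalsEnd} equals $\End_R(P)$ since $P$ is finitely generated, hence is a \emph{unital} $G$-graded ring (its identity being $\Id_P$). Standard Morita theory gives $\grr R\simeq\Modr T$ where $T=\End_{\grr R}(P)$ is the \emph{degree-$G_0$} part; but what we want is the full $G$-graded statement $\grr R\simeq\grr S$. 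To get it, one applies \Cref{Equivalencia} (or more directly the functor $H(\widehat P_?,-)$ where $\widehat P$ carries its $(G,X)$-bigraded $(S,R)$-bimodule structure) to realize $\grr R\simeq\grr S$ as an equivalence of \emph{graded} categories; here $P$ being a finitely generated projective generator of $\grr R$ supplies precisely the conditions \eqref{EquiP}/\eqref{EquiQ} of \Cref{Equivalencia}. Finally, to see $S=\END_R(P)$ is \emph{strongly} graded one invokes \Cref{weaklydivide}: one must verify that for every $g\in G$ the shift $(t(g))P$ is a direct summand of a finite direct sum of copies of $(g)P$. This last verification is the genuine obstacle — it amounts to showing that the generating hypothesis \eqref{MNugRGen}, which controls generation ``from below'' by the fixed finite set $F$, forces the shifts of $P$ to dominate one another uniformly; the key is that since $\{u(h)R:h\in F\}$ generates $\grr R$, in particular each $(g)R$ (hence each shift $(t(g))P=\bigoplus_{h\in F}u(t(g)h^{-1}\cdot\ ?)\dots$, i.e.\ $\bigoplus_h u(g\cdot\text{shift})$) is a quotient, and projectivity of $P$ turns ``quotient'' into ``direct summand'', with finiteness of $F$ keeping the number of copies bounded. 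I expect the bookkeeping of which shift lands where — tracking $t(g)$, $d(g)$ and the action of $g$ on the index set $F$ — to be the fiddly part, but conceptually it is forced by \Cref{weaklydivide} once the summand statement is set up correctly.
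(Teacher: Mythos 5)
Your strategy matches the paper's proof essentially step for step: the same cycle of implications (modulo a harmless reordering), the same construction $S=\END_R\bigl(\bigoplus_{g\in F}u(g)R\bigr)$ made unital via \Cref{ENDequalsEnd} and strongly graded via \Cref{weaklydivide}, and the same appeal to \Cref{Equivalencia} to upgrade the Morita equivalence to a graded one. The verification you flagged as the obstacle resolves exactly as you guessed: since $Q=\bigoplus_{g\in F}u(g)R$ generates $\grr R$ and $(h^{-1})Q$ is finitely generated projective, there is a split surjection $Q^{n}\to (h^{-1})Q$, and applying the shift functor $(h)(-)$ turns it into a split surjection $(h)Q^{n}\to (t(h))Q$, which is the hypothesis of \Cref{weaklydivide}.
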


\begin{proof}
 \eqref{MNEquiv} implies \eqref{MNFGProjGen} and \eqref{MNFGProjGen} implies \eqref{MNFGGen} are clear.

\eqref{MNFGGen} implies \eqref{MNugRGen}. Suppose that $M$ is a finitely generated generator of $\grr R$. 
By \Cref{RGorro}, $\{u(g)R : g\in G, u\in U_g\}$ is a set of generators of $\grr R$ and hence there is a surjective homomorphism $\oplus_{i=1}^n u_i(g_i)R \to M$ with $g_1,\dots,g_n\in G$ and $u_i\in U_{g_i}$. 
Then $\{u_i(g_i)R : i=1,\dots,n\}$ generates $\grr R$. On the other hand, as $R$ has graded local units, there is an idempotent $u$ of $R_{G_0}$ such that $uu_i=u_i=u_iu$ for every $i=1,\dots,n$. Moreover $u_i(g_i)R$ is a direct summand of $u(g)R$ in $\grr R$. Therefore $\{u(g_i)R:i=1,\dots,n\}$ is a set of generators of $\grr R$.

\eqref{MNugRGen} implies \eqref{MNStronglyGraded}. We use that $(g)((h)M)) = (gh)M$ whenever $M\in \grr R$ and $gh$ is defined in $G$. Let $u$ and $F\subseteq G$ satisfy \eqref{MNugRGen} and let $Q=\bigoplus_{g\in F} u\cdot(g)R$. Then $Q$ is a generator of $\grr R$ and for every $h\in G$, $(h^{-1})Q$ is a finitely generated projective module. Thus there exists $n \in \mathbb{N}$ and a split surjective $R$-module homomorphism $Q^n \rightarrow (h^{-1})Q$ which can be also seen as a split surjective homomorphism $(h)Q^n \rightarrow (h)(h^{-1})Q = (t(h))Q$. Thus $(t(h))Q$ is isomorphic to a direct summand of finitely many copies of $(h)Q$. Therefore, by \Cref{ENDequalsEnd} and \Cref{weaklydivide}, $S= \END(Q)$ is a strongly $G$-graded unital  ring. Moreover, $Q$ is an $(G,G)$-bigraded $(S,R)$-bimodule, and $\lambda_Q: \widehat{S} \rightarrow E(Q_R)$ is bijective, as for $x,y \in G$
    \[{_x\widehat{S}}_y = \END(Q)_{x^{-1}y} = \Hom_{\grr R}(Q,(y^{-1}x)Q) = \Hom_{\grr R}((y)Q,(x)Q) = {_xE(Q_R)_y}.\]
It is then easy to verify that $Q$ satisfies the conditions in  \Cref{Equivalencia}\eqref{EquiQ}. Therefore $\grr R$ is equivalent to $\grr S$.

    \eqref{MNStronglyGraded} implies \eqref{MNEquiv} follows from \Cref{strgrrestriction} and the fact that $\grr R_{G_0}$ and $\text{mod}\text{-}R_{G_0}$ are the same category.

To prove that \eqref{MNugRGen} and \eqref{MNRdg} are equivalent we fix an idempotent $u$ of $R$ and a finite subset $F$ of $G$ and show that $u$ and $F$ satisfy \eqref{MNugRGen} if and only if they satisfy \eqref{MNRdg}.  

Suppose that $u$ and $F$ satisfy \eqref{MNugRGen} and let $g \in G$. Then, there exists a surjective homomorphism $f:\bigoplus_{i\in I} u(h_i)R \rightarrow (g)R$, with $h_i \in F$ for every $i\in I$.
Thus, for each $r \in R_{d(g)} = (g)R_g$, there exists $r_i \in (h_i)R_g = R_{h_i^{-1} g} $for each $i\in I$ and $r_i=0$ for almost all $i$ such that 
    \[r = f\left(\sum_{i\in I} ur_i\right) = \sum_{i\in I} f(u_{d(h_i)}ur_i) =\sum_{i\in I} f(u_{d(h_i)})ur_i.\]
Since $u_{d(h_i)} \in R_{d(h_i)} = (h_i)R_{h_i}$, we have that $f(u_{d(h_i)}) \in (g)R_{h_i} = R_{g^{-1}h_i}$. Thus, $r \in \sum_{h \in F} R_{g^{-1}h} u R_{h^{-1}g}$. This shows that $u$ and $F$ satisfy \eqref{MNRdg}.

Conversely, suppose that $u$ and $F$ satisfy \eqref{MNRdg}. 
Let $v\in V$ and $g\in G$. 
Then $v(g)R=v_{d(g)}R$ and $v_{d(g)}\in R_{d(g)}$ and hence, by hypothesis, there exist $h_1, \dots, h_n \in F$, $r_i \in R_{g^{-1}h_i}$ and $r'_i \in R_{h_i^{-1}g}$ for $i =1, \dots, n$ such that
    \[v_{d(g)} = \sum_{i = 1}^n r_i u r_i'.\]
Since $v_{d(g)}$ is idempotent, we can suppose that $r_i = v_{d(g)} r_i$, for each $i = 1, \dots, n$. Now, define $f_i: u(h_i)R \rightarrow v(g)R$ such that $f_i(r) = r_i r$. It is easy to see that $f_i$ is well defined and is a homomorphism of $G$-graded right $R$-modules. Furthemore, 
    $f = \bigoplus_{i = 1}^n f_i: \bigoplus_{i = 1}^n u(h_i)R \rightarrow v(g)R$ is a surjective homomorphism of $G$-graded right $R$-modules, as $f((r'_i)_i) = v$. Since $\{v(g)R: g \in G, v \in U(R)\}$ is a set of generators, so is $\{u(g)R:g\in F\}$. This shows that $\{u(h)R : h\in F\}$ is a set of generators of $\grr R$, i.e. $u$ and $F$ satisfy condition \eqref{MNugRGen}.
\end{proof}

\textbf{Acknowledgments}. 
The authors are grateful to Javier Sánchez for his valuable comments. 
The first author thanks the Departamento de Matemáticas of Universidad de Murcia for the hospitality during his stay along the year 2024 when this paper was mostly written.

\bibliographystyle{amsplain}
\bibliography{References}

\end{document}